\documentclass[12pt,epsfig,amsfonts]{amsart} 
\setcounter{tocdepth}{2}
\usepackage{amsmath,amsthm,amssymb,amscd,epsfig,color}
\usepackage{graphicx}
\usepackage{ulem}
\usepackage{mathrsfs}

\setlength{\topmargin}{0.13in} \setlength{\textheight}{8.85in}
\setlength{\textwidth}{5.8in} \setlength{\oddsidemargin}{0.2in}
\setlength{\evensidemargin}{0.2in} 

\setlength{\unitlength}{1cm}

\newtheorem*{theorema}{Main Theorem}

\newtheorem{prop}{Proposition}[section]
\newtheorem{lemma}[prop]{Lemma}

\theoremstyle{remark}
\newtheorem{remark}[prop]{Remark}

\numberwithin{equation}{section}


\begin{document}

\author{Johannes Jaerisch and Hiroki Takahasi}

\address{Graduate School of Mathematics, Nagoya University,
Furocho, Chikusaku, Nagoya, 464-8602, JAPAN} 
\email{jaerisch@math.nagoya-u.ac.jp}

\address{Keio Institute of Pure and Applied Sciences (KiPAS), Department of Mathematics,
Keio University, Yokohama,
223-8522, JAPAN} 
\email{hiroki@math.keio.ac.jp}

\subjclass[2020]{37C45, 37D25, 37D35, 37D40, 37E05, 37F32}
\thanks{{\it Keywords}: Fuchsian group, Bowen-Series map, thermodynamic formalism, multifractal analysis}

\title[multifractal analysis of Homological growth rates]{
Multifractal analysis of
homological\\ growth rates
for hyperbolic surfaces} 
 \date{\today}
 \maketitle
 \begin{abstract} 
 We perform a multifractal analysis of homological growth rates of oriented geodesics on hyperbolic surfaces.  
 Our main result provides a formula for the Hausdorff dimension of level sets of prescribed growth rates in terms of a generalized Poincar\'e exponent of the Fuchsian group. We employ symbolic dynamics developed by Bowen and Series, ergodic theory and thermodynamic formalism to prove the analyticity of the dimension spectrum.
 \end{abstract}
 
 \section{Introduction}



A Fuchsian group is a discrete  group
of orientation-preserving isometries acting in the Poincar\'e disc model $(\mathbb D,d)$ of hyperbolic space. 
Fuchsian groups play an important role in the uniformization of hyperbolic surfaces  and geometric group theory.
 For the background  on Fuchsian groups we refer the reader to \cite{Bea83}.

 Throughout this paper, $G$ denotes a  finitely generated non-elementary Fuchsian group. Having fixed a finite set of generators of $G$, for $g\in G$ we denote by $|g|$ the minimal number of  generators needed to represent $g$, called the word length of $g$.
 It follows from the triangle inequality that there exists  $\alpha_+ >0$ such that $d(0,g0)\le \alpha_+ |g|$ for all $g\in G$. If  $\mathbb{D}/G$ has no cusps,  the Milnor-Swarc Lemma implies the existence of  $\alpha_->0$ such that $\alpha_- |g| \le d(0,g0)$. If $\mathbb{D}/G$ has cusps,  there exists $C>0$ such that $2\log|g|-C \le d(0,g0)$ by \cite{floyd80}. 
The complexity of the  
 action of $G$ is reflected in the fact that
 the  growth rate of $d(0,g0)/|g|$, as $|g|\rightarrow \infty$,  takes on uncountably many values, and rates of convergence are not uniform. In this paper 
 we perform a multifractal analysis of this growth rate  along oriented geodesics, which are  circular arcs orthogonal to the boundary  $\mathbb S^1$ of $\mathbb D$.

  \begin{figure}
\begin{center}
\includegraphics[height=5cm,width=10cm]{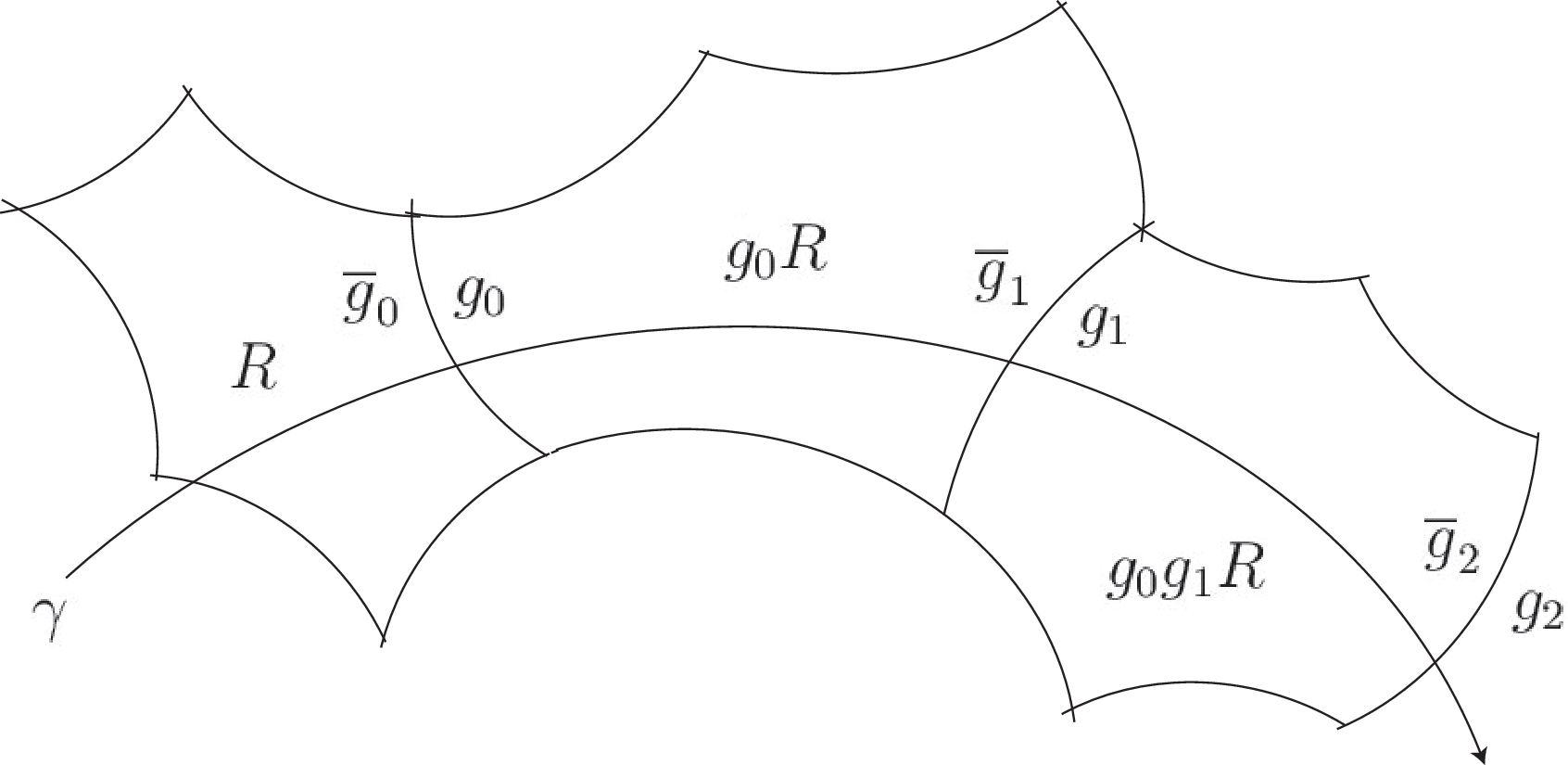}
\caption{An oriented geodesic $\gamma$ crossing  copies of the fundamental domain $R$.}
\label{fig:geo}
\end{center}
\end{figure}

  Let $R\subset \mathbb{D}$ be a convex,   locally finite fundamental domain for $G$ which contains $0$ in its interior \cite{Bea83}. The finite set of side-pairings of $R$ is denoted by $G_R$ and defines a symmetric set of generators of $G$. 
 We call $R$ {\it admissible} if $R$ has {\it even corners} \cite{BowSer79,Ser86} and satisfies a technical condition. We refer the reader to Section~\ref{sec:cutting} for the details.  
 Let $\mathscr{R}$ denote the set of oriented complete geodesics $\gamma$ joining two points  in $\mathbb S^1$ and intersecting the interior of $R$. If $\gamma \in \mathscr{R}$ cuts through the copies $R, g_0R, g_0g_1R, \ldots $ of $R$, with $g_i \in G_R$ and $i=0,1,\ldots\in\mathbb N$, then $g_{0}, g_{1}, g_{2}, \ldots$
  is called the {\it  cutting sequence} of $\gamma$ (see \textsc{Figure}~\ref{fig:geo}). By slightly perturbing geodesics cutting  through a vertex of $R$  we will define for each $\gamma\in \mathscr{R}$ a unique finite or infinite cutting sequence in Section \ref{sec:cutting}.  For $\gamma \in \mathscr{R}$ with  cutting sequence 
   $g_{0},g_{1},\ldots$
   of length at least $n\ge 1$,  we define
    \[t_{n}(\gamma)=d(0,g_0 g_1 \cdots g_{n-1}0),\] 
and we call  $t_n(\gamma)/n$ the homological growth rate  of $\gamma$ \cite{KesStr04}.  Since $R$ has even corners,    $g_0\cdots g_{n-1}$ has word length $n$ with respect to $G_R$
(see Proposition~\ref{cutting-is-shortest}).  We denote by $\Lambda=\Lambda(G)$ the limit set of $G$, and by $\Lambda_c=\Lambda_c(G)$ the conical limit set of $G$. We have $\Lambda_c\subset \Lambda$ and by a result of Beardon and Maskit \cite{BeaMask74}, $\Lambda \setminus \Lambda_c$ is  equal to the countable set of parabolic fixed points of elements of $G$. 
It turns out in Lemma~\ref{cut-converge} that  $\gamma \in \mathscr{R}$  has an infinite cutting sequence  if and only if its positive endpoint  $\gamma^+ $  belongs to $\Lambda_c$.
For $\alpha\geq0$ we define the {\it level set}
 \[\mathscr{H}(\alpha)=\left\{\xi\in\Lambda_c\colon\text{there exists $\gamma\in\mathscr{R}$
  \ such that $\gamma^+=\xi$ and } \lim_{n\to\infty}\frac{t_n(\gamma)}{n}=\alpha\right\}.\] 

    Since the  level sets are pairwise disjoint
   by  Remark~\ref{coincide-rem}, we have a multifractal decomposition of the conical limit set 
  \[\Lambda_c=\left(\bigcup_{\alpha\geq0} \mathscr{H}(\alpha)\right)\cup \mathscr{H}_{\rm ir}, \]
  where $\mathscr{H}_{\rm ir}$ denotes the set of $\xi\in\Lambda_c$ for which $t_n(\gamma)/n$ does not converge as $n\to\infty$ for any $\gamma\in\mathscr{R}$ whose positive endpoint is $\xi$.

    If $G$ is of the first kind, that is, $\Lambda=\mathbb{S}^1$, then
  there exists a 
  constant $\alpha_G\geq0$ such that
$\mathscr{H}(\alpha_G)$ has full Lebesgue measure in $\mathbb S^1$.
We refer the reader to Section~\ref{typical-h}
for more information on $\alpha_G$.  
For a description of the fine structure of $\Lambda$, it is necessary to
analyze other level sets which are negligible in terms of  Lebesgue measure. 
    Let $\dim_{\rm H}$ denote the Hausdorff dimension on $\mathbb S^1$, and for $\alpha\geq0$ let
    \[b(\alpha)=\dim_{\rm H} \mathscr H(\alpha) .\]
    Information on the complexity of the limit set is encoded in
 the function
 $\alpha\mapsto b(\alpha)$ called the
 {\it  spectrum of homological growth rates}, or simply the $\mathscr{H}$-spectrum. Note that the $\mathscr{H}$-spectrum depends on the choice of the fundamental domain $R$. 
 
The thermodynamic formalism gives an access to the description of the $\mathscr{H}$-spectrum. We define 
\[\delta_G=\dim_{\rm H}\Lambda.\]
It is well known \cite{Bea71,Pat76} \cite[Corollary~26]{Sul79} that  $\delta_G$ is equal to the {\it  Poincar\'e exponent of $G$} given by
\[\inf \left\{\beta\geq0  \colon \sum_{g\in G} \exp(-\beta d(0,g0))<+\infty\right\}.\]
 Imitating this style, following  \cite[Theorem~2.1.3]{MauUrb03}
we introduce a {\it  generalized Poincar\'e exponent} at an inverse temperature $\beta \in \mathbb{R}$ by
\[P(\beta)=\inf \left\{t \in \mathbb{R} \colon \sum_{g\in G} \exp(-\beta d(0,g0)-t|g|)<+\infty\right\}.\]
We call the function $\beta\in\mathbb R\mapsto P(\beta)$ the {\it  geometric pressure function of $G$ with respect to $R$}, or simply the {\it pressure function}. The convex conjugate of $P$ is for $\alpha \in \mathbb{R}$ given by
\[P^*(-\alpha)=\inf
\left\{\alpha\beta + P(\beta)\colon\beta\in\mathbb R\right\}.\]
We set \[\alpha_+=\sup_{\gamma\in\mathscr{R} ,\gamma^+ \in \Lambda_c}\limsup_{n\to\infty}\frac{t_n(\gamma)}{n}\quad
\text{and}\quad\alpha_-=\inf_{\gamma\in\mathscr{R} ,\gamma^+ \in \Lambda_c}\liminf_{n\to\infty}\frac{t_n(\gamma)}{n},\] 
and define the {\it freezing point} 
\[
\beta_+=\sup\{\beta\in\mathbb R\colon P(\beta)>-\alpha_-\beta\}.
\]

\begin{figure}
\begin{center}
\includegraphics[height=9cm,width=12cm]{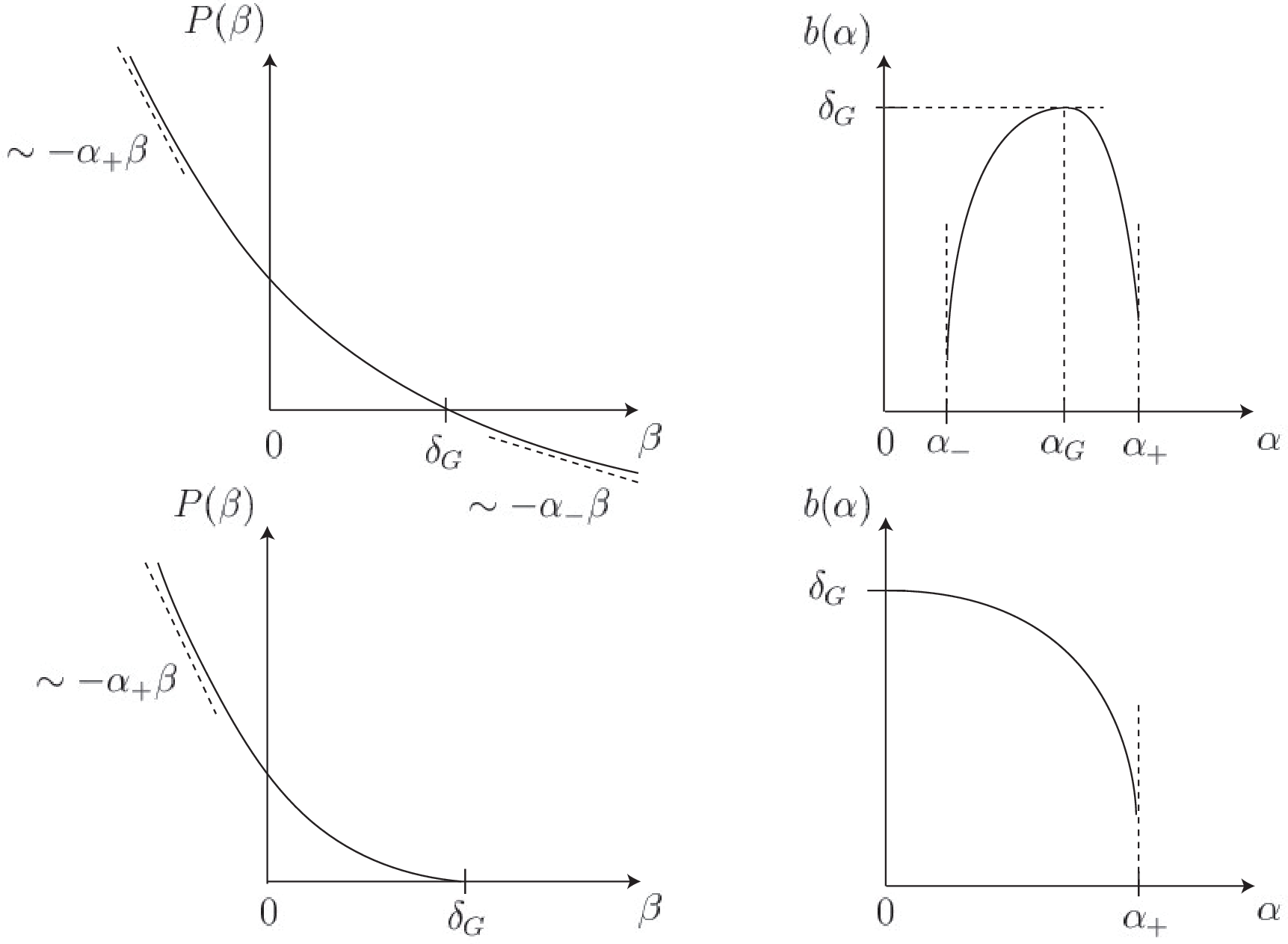}
\caption{The graphs of $\beta\in\mathbb R\mapsto P(\beta)$ and $\alpha\in[\alpha_-,\alpha_+]\mapsto b(\alpha)$: $G$ has no parabolic element (upper); $G$ has a parabolic element (lower).
We have $\delta_G=\min\{\beta\geq0\colon P(\beta)=0\}$. The constant $\alpha_G$ is the unique maximum point of the $\mathscr{H}$-spectrum, see \eqref{ag} for the definition.}
\label{fig:spectrum}\end{center}\end{figure}

\begin{theorema}
\label{bslyapunov}
Let $G$ be a finitely generated non-elementary Fuchsian group with an admissible fundamental domain $R$. 
\begin{itemize} 
\item[(a)] We  have $\alpha_-<\alpha_+$, and  the level set $\mathscr{H}(\alpha)$ is non-empty if and only if $\alpha\in[\alpha_-,\alpha_+]$.   The $\mathscr{H}$-spectrum is
continuous on $[\alpha_-,\alpha_+]$,  analytic on $(\alpha_-,\alpha_+)$, and for each $\alpha\in[\alpha_-,\alpha_+]\setminus\{0\}$ we have
\[b(\alpha)=\frac{P^*(-\alpha)}{\alpha}.\]  Moreover, the $\mathscr{H}$-spectrum attains its maximum $\delta_G$ at a unique $\alpha_G\in [\alpha_-,\alpha_+)$,  is strictly increasing on  $[\alpha_-,\alpha_G]$ and strictly decreasing on  $[\alpha_G,\alpha_+]$, and $\lim_{\alpha\nearrow\alpha_+}b'(\alpha)
=-\infty$. 
  If $G$ has no parabolic element, then  $\alpha_G>\alpha_->0$ and $\lim_{\alpha\searrow\alpha_-}b'(\alpha)=+\infty$.     If $G$ has a parabolic element, then  $\alpha_G=\alpha_-=0$.
    \item[(b)] 
 The pressure function $P$
is convex and continuously differentiable on $\mathbb R$, and  analytic and strictly convex on $(-\infty,\beta_+)$.
If $G$ has no parabolic element, then
 $\beta_+=+\infty$. If  $G$ has a parabolic element, then $\beta_+=\delta_G$ and $P$ vanishes on $[\delta_G,+\infty).$ 
     \end{itemize} 
     \end{theorema}

For finitely generated,  essentially free Kleinian groups in arbitrary dimension, Kesseb\"ohmer and Stratmann \cite{KesStr04} analyzed homological growth rates along  geodesic rays, 
and analyzed the $\mathscr{H}$-spectrum.
Our Main~Theorem  significantly extends \cite[Theorem~1.2]{KesStr04} to  a large class of Fuchsian groups which are not free groups. In particular, the Main~Theorem applies to Fuchsian groups uniformizing compact hyperbolic surfaces. 

  A key ingredient  in \cite{KesStr04}  is that
for essentially free Kleinian groups, cutting sequences of geodesic rays directly give a symbolic coding of the limit set by a Markov shift.
For Fuchsian groups, essentially free groups are free groups, and the  Koebe-Morse coding coincides with the Artin coding \cite{Ser86}.
For the Fuchsian groups we consider in this paper, the Koebe-Morse and Artin codings do not necessarily coincide  \cite{Ser86}, namely, cutting sequences do not have a direct link to the dynamics on the limit set. To overcome this difficulty, we utilize the  results for Fuchsian groups with even corners  \cite{BowSer79, Ser86}. 

The Main Theorem is a manifestation of the familiar thermodynamic  and  multifractal  picture for conformal expanding Markov maps possibly
with neutral fixed points (see e.g., \cite{GelRam09,Iom10,KesStr07,Nak00,Pes97,PesWei97,PesWei01,PreSla92,Sch99,Wei99})
in the context of Fuchsian groups.
Indeed, one main step in the proof of the Main~Theorem is to 
 clarify an elusive coincidence between the $\mathscr{H}$-spectrum and the Lyapunov spectrum of the Bowen-Series map \cite{BowSer79}.

 Let us compare 
 \cite[Theorem~1.2]{KesStr04} and the Main~Theorem in terms of phase transitions, 
 i.e., 
  the loss of analyticity of the pressure function in the case the group has a parabolic element.
For essentially free Kleinian groups,
two types of phase transitions were detected
in \cite[Theorem~1.2]{KesStr04}: the pressure is not differentiable at the freezing point, or the pressure is
continuously differentiable on $\mathbb R$ and not analytic at the freezing point.
For the Fuchsian groups considered in the Main~Theorem,
we have shown that only the second type of phase transition occurs.

In fact,
 the graphs of the $\mathscr{H}$-spectra in \textsc{Figure}~\ref{fig:spectrum} are only schematic. If $G$ has no parabolic element,
 we do not know whether the spectrum is concave
on $[\alpha_-,\alpha_+]$ or not (see \cite{IomKiw09}). Moreover,
if $G$ has a parabolic element and the pressure function is $C^2$, then 
$P''(\delta_G)=0$, which implies that the $\mathscr{H}$-spectrum has an inflection point (see Proposition~\ref{schematic-prop}).

\subsection*{Methods of proofs and structure of the paper}
Bowen and Series \cite{BowSer79} constructed a piecewise analytic Markov map $f\colon\varDelta\to\mathbb S^1$ which is orbit equivalent to the action of $G$ on the limit set, now called the {\it  Bowen-Series map}. 
To prove our main results,
 we use three different 
  symbolic codings (partitions) associated with the limit set $\Lambda$
  and the map $f$.
 
 In Section~2, following \cite{BowSer79,Ser86} we introduce the Bowen-Series map and a non-Markov partition well-adapted to the group structure, and develop various asymptotic results associated with them. 
 A main conclusion is that (I)
 the level sets of homological growth rates coincide with the level sets of the pointwise Lyapunov exponents of the map $f$ (Proposition~\ref{coincide-lem}).

 The Markov partition constructed in \cite{BowSer79} is an infinite partition if and only if $G$ has a parabolic element.
In Section~3, for groups having parabolic elements
we construct a finite Markov partition slightly modifying the construction in \cite{BowSer79}.
Combining this with the non-Markov partition introduced in Section~2,
 we show that (II)
 the generalized Poincar\'e exponent coincides with the geometric pressure
 (Proposition~\ref{p-equal}). 

 By virtue of the identities (I) and (II), the proof of the Main~Theorem boils down to 
 implementing the thermodynamic formalism and multifractal analysis for the map $f$.
 Series \cite[Theorem~5.1]{Ser81b} showed that 
 some power of $f$ is uniformly expanding  
  if $G$ has no parabolic element.
In this case, properties of the pressure function
and that of the Lyapunov spectrum of $f$ are well known \cite{Bow75,Pes97,PesWei97,PesWei01,Rue04,Wei99}.
If $G$ has a parabolic element, $f$ has a neutral periodic point
and these classical results do not apply.
To deal with this case, we take an inducing procedure
that is now familiar in the construction of equilibrium states (see e.g., \cite{PesSen08}).
In Section~4 we
construct a uniformly expanding induced Markov map $\tilde f$ equipped with an infinite Markov partition that allows us to represent
$\tilde f$ with a countable Markov shift.

Although the construction of the induced Markov map $\tilde f$ essentially follows 
Bowen and Series \cite{BowSer79},
one important difference from \cite{BowSer79} is that we dispense with the geometric hypothesis $(i)$ of property $(*)$ in \cite[p.406]{BowSer79} which states that {\it each side of the fundamental domain is contained in the isometric circle of the associated side-pairing.}
This implies that
$f$ is non-contracting, namely 
 \begin{equation}\label{NC}\inf_\varDelta |f'|\geq1.\end{equation}
 This kind of assumption is usually imposed 
in the thermodynamic formalism as well as the multifractal analysis 
of pointwise Lyapunov exponents of intermittent Markov maps, in order to facilitate arguments, 
see e.g., \cite{GelRam09,JaeTak20,JJOP10,JorRam11,MauUrb00,Nak00,PreSla92,Urb96,Yur02},
 and also \cite{Mor97}.
  We exploit the discrete group structure  and dispense with \eqref{NC} altogether.  
     If the Fuchsian group $G$ has no parabolic element, one can apply the Milnor-Swarc~Lemma to derive that some iterate of $f$ is uniformly expanding \cite{Ser81b}. If $G$
has parabolic elements, we use similar ideas to derive uniform expansion of the induced map $\tilde f$ (see Lemma~4.4 and Proposition~4.5). 
 

In Section~5 and Appendix A
we verify several conditions on  induced potentials associated with $\tilde f$, and apply results of Mauldin and Urba\'nski 
\cite{MauUrb03} (see also e.g., \cite{ADU93,BS03,Sar99}) to establish the existence and uniqueness of equilibrium states for the induced map $\tilde f$.
We then construct equilibrium states for the original map $f$, and use them to 
establish the analyticity of the pressure function.
Further, we combine results in the previous sections with the dimension formula for level sets of pointwise Lyapunov exponents in \cite{JaeTak20} to complete the proof of the Main~Theorem.


\subsection*{Notation}
Throughout we shall use the notation
 $a\ll b$ 
for two positive reals $a$, $b$
to indicate that 
 $a/b$ is bounded  from above  by a constant which depends only on  $G$ or $R$.
If $a\ll b$ and $b\ll a$, we write
$a\asymp b$.
For $g\in G$, the inverse of $g$ is denoted by $\bar g$, and the 
word length of $g$ with respect to $G_R$ is denoted by $|g|$.
Let ${\rm cl}(\cdot)$ and ${\rm int}(\cdot)$ denote the closure and interior operations in $\mathbb S^1$ respectively.
 Let $|\cdot|$ denote the Lebesgue measure on $\mathbb S^1$,
and let ${\rm diam}(\cdot)$ denote the Euclidean diameter on $\mathbb R^2$.

\section{The Bowen-Series map}

In Section~\ref{sec:cutting} we collect basic facts about  cutting sequences and fundamental domains with even corners.  In Section~\ref{BowSer} we introduce the Bowen-Series map $f$  together with an associated non-Markov symbolic coding
 called  $f$-expansion.
 In Section~\ref{character}, following Series \cite{Ser86} we characterize admissible words for this coding that will be used later. In Section~\ref{no-attr} we establish uniform decay of cylinders, and use it in Section~\ref{milddist} to prove a distortion estimate. 
 In Section~\ref{conv-cut} we describe 
 two orbits in the hyperbolic space, one from the cutting sequence
 of a geodesic and the other from the $f$-expansion of the positive endpoint of the same geodesic. 
 In Section~\ref{decayest} we relate the size of a cylinder with the corresponding homological growth rate, and use this estimate
 in Section~\ref{L=H} to show that
 the level sets of homological growth rates coincide with the level sets of the pointwise Lyapunov exponents of the map $f$.

\subsection{Cutting sequences for fundamental domains with even corners} \label{sec:cutting}
Let $R\subset \mathbb{D}$ be a  fundamental domain for $G$. By a fundamental domain we always mean a convex  and locally finite fundamental domain  which contains $0$ in its interior \cite{Bea83}.  The sides of $R$ are geodesics, or else arcs contained in $\mathbb S^1$.
The latter sides are called free sides. Note that $G$ is of the first kind 
if and only if $R$ has no free sides \cite[Theorem~12.2.12]{Rat94}. Since $G$ is finitely generated, $R$ has finitely many sides.
The sides of $R$ which are not free 
give rise to a finite set  of side-pairing transformations $G_R$. Recall that $G_R$ is a  symmetric set of generators of $G$. 

 The copies of $R$ adjacent to $R$ along the sides of $R$ are of the form $eR$,
$e\in G_R$. For every $g\in G$  and $e\in G_R$, we label the side common to $gR$ and $geR$ on the side of $geR$ by $e$, and  on the side of $gR$ by $\bar e$. 
By a side or vertex of 
$N=G\partial R$ 
we mean the $G$-image of a side or vertex of $R$. 
We say   $R$  has {\it  even corners} 
 if  $N$ is a union of complete geodesics  (\cite{Ser86}, see also \cite{BowSer79}).  
 We say $R$ is {\it admissible} if $R$ has even corners with  at least four sides and satisfies the following property: if  $R$ has precisely four sides with all vertices in $\mathbb{D}$ then at least three geodesics in $N$ meet at each vertex of $R$ \cite[Theorem~3.1]{Ser86}.
 The even corner assumption is not as restrictive as it appears. In fact, every surface which is uniformized by a finitely generated Fuchsian group
 has a fundamental domain with this property  (see  \cite[Section 3]{BowSer79} and \cite[p.609, l.9-10]{Ser86}).


Unless otherwise stated we assume all geodesics are complete.
  If $\gamma$ is an oriented geodesic which passes through a vertex $v$ of $N$ in $\mathbb D$, we make the convention that $\gamma$ is replaced by a curve deformed to the right around $v$.
We shall take as understood that all geodesics have been deformed, where necessary, in this way. 

   For  $\gamma \in \mathscr{R}$ we define a one-sided,
  finite or infinite 
  sequence 
  $g_{0}, g_{1}, g_{2}, \ldots$
  of labels in $G_R$, called the {\it  cutting sequence} of $\gamma$ as follows (see \textsc{Figure}~\ref{fig:geo}):
  $g_0$ is the exterior label of the side of $R$ across which $\gamma$ crosses from $R$ to
  $g_0R$, 
  and for each $n\geq1$ we use  $g_n$
  to denote the exterior label of the side of 
  $g_0\cdots g_{n-1}R$
  across which $\gamma$ crosses from 
  $g_0\cdots g_{n-1}R$ to $g_0\cdots g_{n}R$. 

Given a discrete set $S$ and
 a set $Z$ of one-sided infinite sequences
 $(z_n)_{n=0}^\infty=z_{0}z_{1}\cdots$ in the cartesian product topological space $S^{\mathbb N}$, let
$E(Z)$ denote the set of finite words in $S$ that appear in some element of $Z$. 
For an integer $n\geq1$, let $E^n(Z)$
denote the set of elements of $E(Z)$ with word length $n$.

A word $w\in E(G_R^{\mathbb N})$
represents the group element given by the combination of the symbols under the group operation. From now on, the word length of elements of $G$ is always understood with respect to $G_R$.
We say $w$
is {\it  shortest} if its word length is equal to the word length of the element of $G$ represented by $w$, and we say $w$ is
 {\it  reduced} if it does not contain successive letters $e,\bar e\in G_R$. 
Shortest words are reduced.
We say 
$(g_n)_{n=0}^\infty\in G_R^{\mathbb N}$ is {\it shortest} if
  $g_j \cdots g_k$ is shortest, for all $j$, $k\in\mathbb N$ with $j<k$.
\begin{prop}[\cite{Ser86}, Theorem~3.1(ii)] \label{cutting-is-shortest} If $R$ is admissible, then the cutting sequences of $\gamma \in \mathscr{R}$  are shortest. \end{prop}

 \begin{figure}
\begin{center}
\includegraphics[height=7.5cm,width=7.5cm]{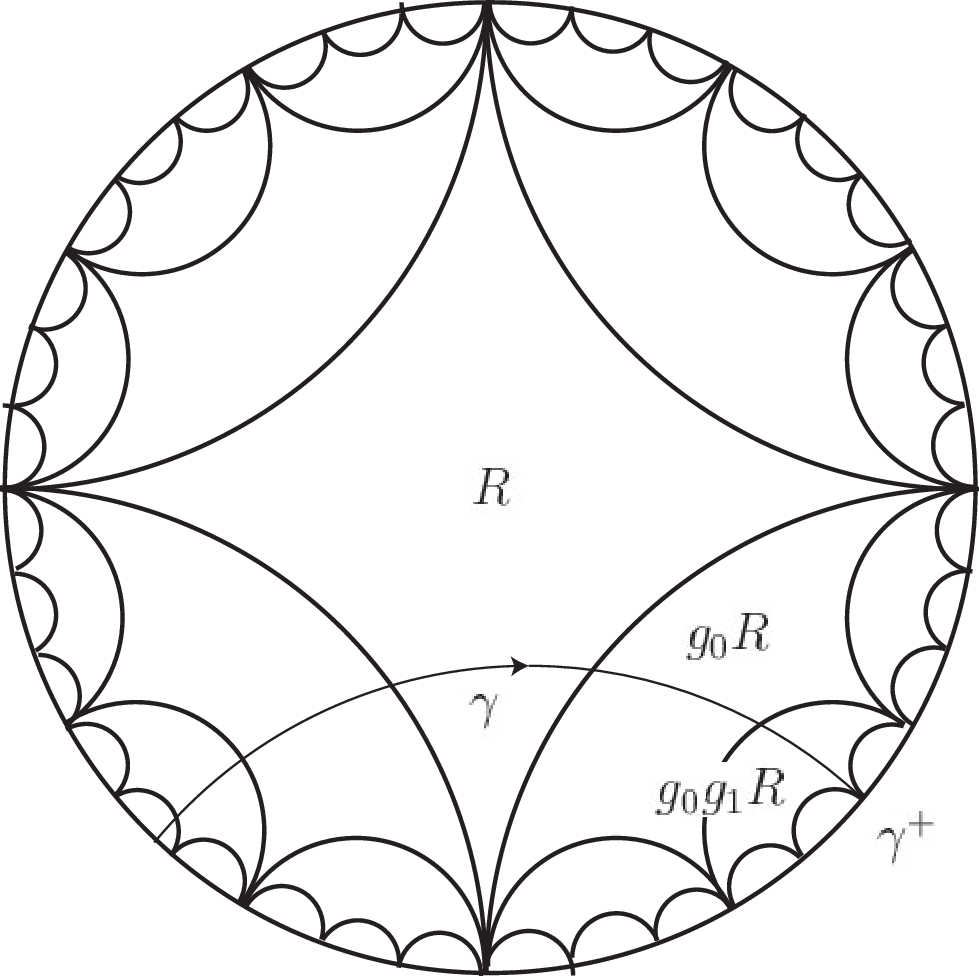}
\caption{An oriented geodesic $\gamma$ with the finite cutting sequence $g_0,g_1$ for a free Fuchsian group with two generators.}
\label{fig:cusp}
\end{center}
\end{figure}

 The cutting sequence of $\gamma\in\mathscr{R}$ may not always be infinite. \textsc{Figure}~\ref{fig:cusp} shows  an example  with $\gamma^+ \in \Lambda \setminus \Lambda_c$ for a group of the first kind. Note that $\gamma^+$ is the image of a cusp of $R$ under $G$ and $\gamma$ has no infinite cutting sequence. The next lemma characterizes $\gamma \in \mathscr{R}$ with  infinite cutting sequence.

\begin{lemma}\label{cut-converge}
 An element $\gamma\in\mathscr{R}$ has an infinite cutting sequence if and only if $\gamma^+\in \Lambda_c$. Moreover, for $\gamma\in \mathscr{R}$ with an infinite cutting sequence 
  $(g_n)_{n=0}^{\infty}$ we have  \[\lim_{n\to\infty}g_0\cdots g_n0=\gamma^+.\]
\end{lemma}
\begin{proof}

First assume that $\gamma$ has an infinite cutting sequence $(g_n)_{n=0}^\infty$.
Since cutting sequences are shortest by Proposition~\ref{cutting-is-shortest},  
 $(g_0\cdots g_n)_{n=0}^\infty\subset G$ are pairwise distinct. Since $R$ is locally finite, 
${\rm diam}( g_0\cdots g_nR) \rightarrow 0$
as $n\rightarrow \infty$. Hence, 
$g_0\cdots g_n0\rightarrow \gamma^+$ 
and therefore, $\gamma^+\in \Lambda$. To prove that $\gamma^+\in \Lambda_c$, we assume for the sake of contradiction that $\gamma^+$ is  fixed by  some  parabolic element of $G$. By \cite[Corollary~9.2.9]{Bea83}, $\gamma^+$ is the $G$-image of some cusp of $R$. This implies that $\gamma$ has a finite cutting sequence, and gives the desired contradiction.  
Hence, $\gamma^+\in \Lambda_c$.

Conversely,  assume that  $\gamma \in \mathscr{R}$ with $\gamma^+\in \Lambda$ has no infinite cutting sequence. In this case, $\gamma^+$ belongs to the Euclidean boundary of some image of $R$ under $G$. Hence, by \cite[Theorem~9.3.8]{Bea83},  $\gamma^+$ is fixed by some parabolic element of $G$ and therefore,  $\gamma^+ \notin \Lambda_c$. The proof is complete.
\end{proof}

\begin{figure}
\begin{center}
\includegraphics[height=11cm,width=11cm]{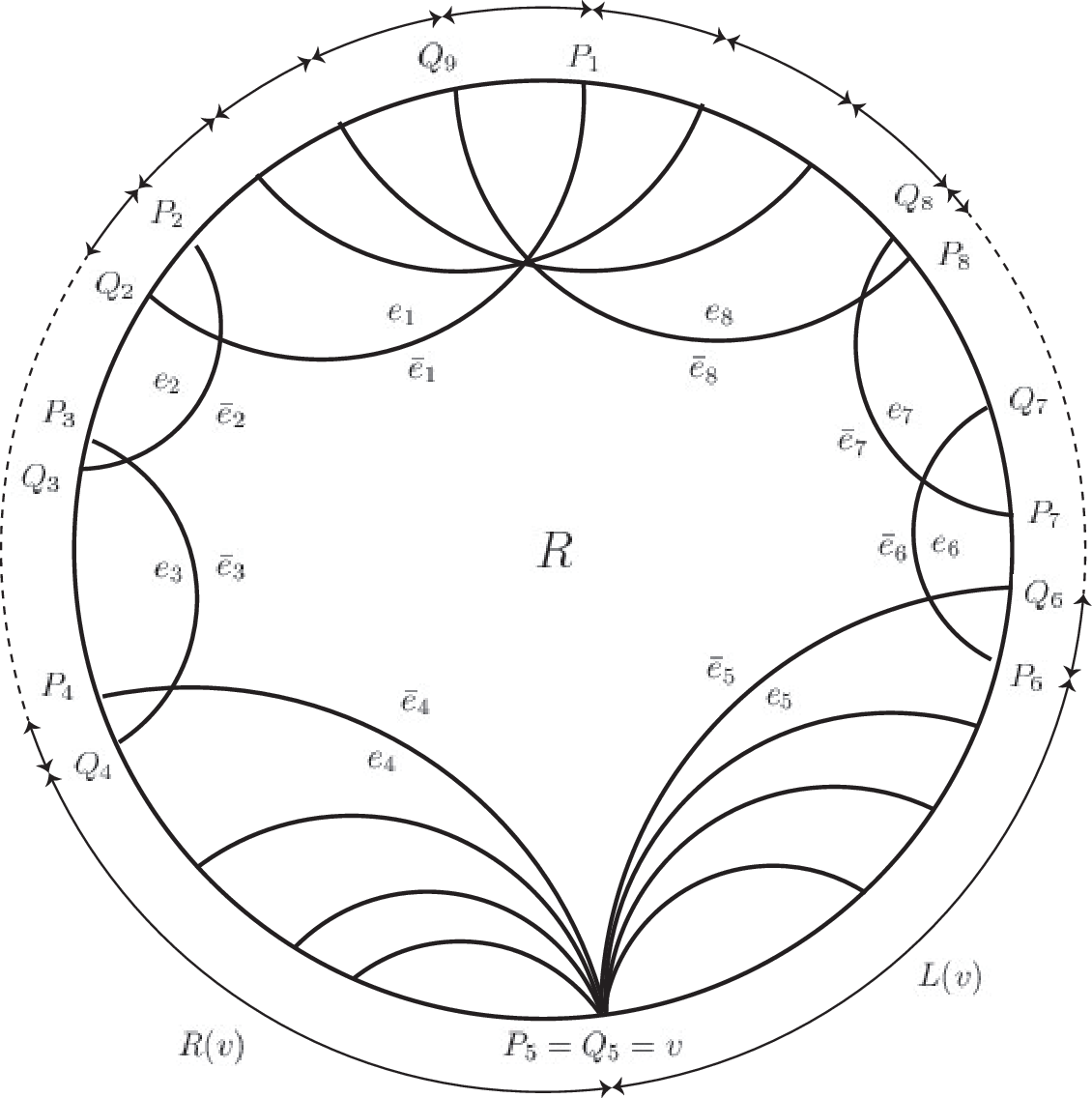}
\caption{A fundamental domain $R$ of a finitely generated Fuchsian group of the first kind with eight sides:
 $e_1$ and $e_8$,  $e_2$ and $e_6$, $e_3$ and $e_7$, $e_4$ and $e_5$ are identified in pairs,
which yields a hyperbolic surface of genus $2$. The bidirectional arrows indicate elements of the finite Markov partition constructed in Section~\ref{markov-sec}.}
\label{fig:BS}
\end{center}
\end{figure}

\subsection{The definition of the Bowen-Series map}\label{BowSer}
 Let $m$ denote the number of sides of the fundamental domain $R$, with exterior labels 
 $e_1,\ldots,e_m$ in anticlockwise order. 
For $1\leq i\leq m$ let $C(\bar e_i)$  denote 
the Euclidean closure of the  geodesic that contains the side of $R$ with the exterior label $e_i$.
   We denote the two endpoints of $C(\bar e_i)$ by $P_i$ and $Q_{i+1}$ in anticlockwise order (see \textsc{Figure}~\ref{fig:BS}).
If 
$C(\bar e_i)\cap C(\bar e_{i+1})\neq\emptyset$ we put $U_{i+1}=P_{i+1}$, and put $U_{i+1}=Q_{i+1}$ otherwise. For $j\in\mathbb Z$ with $i=j$ mod~$m$, set $e_j=e_i$, $P_j=P_i$,  $Q_{j}=Q_{i}$ and $U_j=U_i$.   We define
\[
\varDelta= \mathbb S^1 \setminus \bigcup_{i=1}^{m} [U_{i},P_{i}).
\]
Note that  $\varDelta=\mathbb S^1$ if $G$ is of the first kind.
According to\footnote{In \cite{BowSer79}, the Bowen-Series map is defined only for groups of the first kind.} \cite{BowSer79,Ser86},
  the Bowen-Series map $f\colon\varDelta\to \mathbb S^1$ is given by
\[f|_{[P_i,U_{i+1})}(\xi)=\bar e_i\xi.\]
 The {\it  $f$-expansion} of a point 
  $\xi\in \bigcap_{n=0}^\infty f^{-n}(\varDelta)$
is the one-sided infinite sequence $\xi_f=(e_{i_n})_{n=0}^{\infty} \in G_R^{\mathbb N}$ given by
\begin{equation}\label{BS-second}
f^n(\xi)\in 
[P_{i_n},U_{i_{n}+1})\quad\text{for } n\ge 0.
\end{equation}
We set \[\Sigma^+=\{\xi_{f}\colon \xi\in\Lambda\}.\]
For each $i\in\mathbb Z$, the restriction of $f$ to
$(P_i,U_{i+1})$
is analytic and can be extended to a $C^\infty$ map on $[P_i,U_{i+1}]$.
The derivatives of $f$ at points $P_i$ (resp. $U_{i+1}$) are the right-sided
(resp. left-sided) derivatives.
If $P_i$ (resp. $U_{i+1}$) is a cusp, it is a neutral periodic point of $f$.

\medskip
\noindent{\bf  (Standing hypothesis for the rest of the paper)}: $R$ is an admissible fundamental domain for $G$, and $f$ is the associated Bowen-Series map. 


\subsection{Characterization of admissible BS words}\label{character}
If $v$ is a vertex of $N$ in $\mathbb D$, 
let $n(v)$ denote the number of sides of $N$ through $v$. A small circle around $v$ has a cutting sequence 
$g_1 \cdots g_{2n(v)}$, and $g_1\cdots g_{2n(v)}=1$ is one of the defining relations of $G$.
Note that the relator has even word length since $R$ has even corners. 
A word $w\in E^k(G_R^{\mathbb N})$ is a {\it  clockwise} (resp. {\it  anticlockwise}) {\it  cycle} around $v$ if $k\leq 2n(v)$ and there exists a neighborhood $U$ of $v$ in $\mathbb D$ such that $w$ appears in the ``cutting sequence'' of 
any clockwise (resp. anticlockwise) circle around $v$ in $U$. 
  If moreover $k=n(v)$, we call $w$  a {\it  half cycle},
and  if $k>n(v)$ we call $w$ a {\it  long cycle}.

\begin{prop}[\cite{Ser86}, Theorem~4.2]\label{ser-lem}
A word in $E(G_R^{\mathbb N})$
is contained in $E(\Sigma^+)$ if and only if it is shortest and contains no anticlockwise half cycle.
\end{prop}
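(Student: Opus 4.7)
The plan is to follow the approach of Series \cite[Theorem~4.2]{Ser86}. Unfolding the definition of $\Sigma^+$, a word $w=e_{i_0}\cdots e_{i_{n-1}}$ lies in $E^n(\Sigma^+)$ precisely when the nested intersection
\[
J(w)=\bigcap_{k=0}^{n-1} e_{i_0}\cdots e_{i_{k-1}}[P_{i_k},S_{i_{k+1}})
\]
contains a point of $\Lambda(G)$. Both directions of the proposition will be proved by relating this analytic condition to the geometric conditions on the right-hand side.

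For necessity, I would fix $\eta\in J(w)\cap\Lambda(G)$. Each arc $e_{i_0}\cdots e_{i_{k-1}}[P_{i_k},S_{i_{k+1}})$ is a subarc of $\mathbb{S}^1$ bounded by vertices of the tessellation $N$, and by construction is adjacent, along the side of $e_{i_0}\cdots e_{i_{k-1}}R$ labelled $\bar e_{i_k}$, to the preceding arc. Thus the hyperbolic geodesic from $0$ to $\eta$ crosses successively through the chambers $R,\,e_{i_0}R,\,e_{i_0}e_{i_1}R,\ldots$, so that $w$ coincides with the initial segment of the cutting sequence of that geodesic (with the deform-to-the-right convention at vertices). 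By \cite[Theorem~3.1(ii)]{Ser86} that cutting sequence is shortest, proving the shortness of $w$. For the second property, observe that the choice of left-closed and right-open endpoints in the arcs $[P_i,S_{i+1})$ is precisely what imposes the deform-to-the-right convention on the limit-set side of the BS map; an anticlockwise half cycle in $w$ would trace a counterclockwise loop around a vertex of $N$ and would force $f^k(\eta)$ to land on the forbidden side of some boundary point $P_{i_k}$ or $Q_{i_k+1}$, a contradiction.

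For sufficiency, given $w$ shortest with no anticlockwise half cycle, I would produce $\eta$ by nesting: show inductively that the arcs $J_k=e_{i_0}\cdots e_{i_{k-1}}[P_{i_k},S_{i_{k+1}})$ satisfy $J_{k+1}\subset J_k$ and $J_k\neq\emptyset$, and then take $\eta$ in the resulting (compact) nested intersection. Membership $\eta\in\Lambda(G)$ then follows because $J_k$ is generated by group elements and limit points of $G$-orbits on $\mathbb S^1$ lie in $\Lambda(G)$. The crucial inductive step reduces to the two-letter inclusion $e_{i_k}[P_{i_{k+1}},S_{i_{k+2}})\subset[P_{i_k},S_{i_{k+1}})$, which holds because $e_{i_k}e_{i_{k+1}}$ is shortest (ruling out the immediate reduction that would push the image outside $[P_{i_k},S_{i_{k+1}})$) and starts no anticlockwise half cycle at the vertex on the boundary of $[P_{i_k},S_{i_{k+1}})$.

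The main obstacle is this last geometric inclusion. It requires a careful, case-by-case analysis of how the Möbius isometries $\bar e_i$ permute the endpoints $P_j,Q_j$ of the complete geodesics in $N$ that meet at the vertices of $R$, and of how a potential shortening via a vertex relator would move the image arc. One must verify that wherever such a shortening could occur, either the shortness hypothesis or the anticlockwise-half-cycle hypothesis intervenes to prevent the image from crossing the boundary of the target interval. The even corners assumption (which makes each vertex relator of even length and allows a clean split of cycles into clockwise and anticlockwise halves) and the additional hypothesis in Theorem~A (at least three geodesics at each vertex when $R$ has four sides) are precisely what is needed for this combinatorial analysis to avoid degenerate configurations.
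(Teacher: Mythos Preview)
The paper does not prove this proposition at all; it is quoted verbatim from Series \cite[Theorem~4.2]{Ser86} and used as a black box. So there is no ``paper's own proof'' to compare against---only Series' original argument.

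Your sketch has two genuine gaps. In the necessity direction you assert that if $\eta\in J(w)\cap\Lambda(G)$ then the geodesic from $0$ to $\eta$ has cutting sequence beginning with $w$. This is exactly the identification of $f$-expansion with cutting sequence that the paper warns fails for non-free groups (see Section~\ref{comparison} and Lemma~\ref{lem-parallel}): the cutting orbit and the BS orbit only agree up to sharing a side or vertex, not letter by letter. So you cannot read off shortness of $w$ directly from \cite[Theorem~3.1(ii)]{Ser86} this way. In the sufficiency direction, your reduction of the nesting $J_{k+1}\subset J_k$ to a two-letter inclusion cannot be right as stated: the arcs $[P_i,S_{i+1})$ are \emph{not} a Markov partition for $f$ (this is why Section~\ref{markov-sec} builds a finer one), so the images $f([P_{i_k},S_{i_{k+1}}))$ need not contain whole arcs $[P_{i_{k+1}},S_{i_{k+2}})$; and the ``no anticlockwise half cycle'' condition concerns subwords of length $n(v)$, not pairs of letters, so a two-letter check cannot detect it. You yourself flag this inclusion as ``the main obstacle'' and leave it unresolved, which means the proposal is a strategy outline rather than a proof. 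Series' actual argument proceeds by a detailed analysis of how $f$ acts on the endpoints in $W(v)$ at each vertex $v$, tracking the clockwise/anticlockwise labelling of the arcs $L_r(v),R_r(v)$; that combinatorics is what is missing here.
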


\subsection{Uniform decay of BS cylinders}\label{no-attr}
  Let $n\geq1$ and let  $e_{i_0}\cdots e_{i_{n-1}}\in E^n(\Sigma^+)$.
We define a {\it  BS cylinder}, or more precisely
 a BS $n$-cylinder by 
 \[\Theta(e_{i_0}\cdots e_{i_{n-1}})=\left\{\xi\in \varDelta\colon
f^k(\xi)\in [P_{i_{k}},U_{i_{k}+1} )\quad\text{for } 0\leq k\leq n-1  \right\}.\]
 Any BS cylinder is a left-closed and right-open arc. In what follows, we denote elements of  $E^n(\Sigma^+)$ by $a_0\cdots a_{n-1}$, $a_k\in G_R$ for $0\leq k\leq n-1$, 
 to make a distinction from cutting sequences of geodesics.
Put
\[\Theta_{\max,n}=\max_{a_0\cdots a_{n-1}\in E^n(\Sigma^+)}|\Theta(a_0\cdots a_{n-1})|.\]

If $G$ has no parabolic element, then
$\Theta_{\max,n}$ decays as $n$ increases
since some power of $f$ is uniformly expanding \cite[Theorem~5.1]{Ser81b}. 
Below we show that this uniform decay of BS cylinders still holds even if $G$
has a parabolic element.  
Although some results in \cite{Ser81b} seem to imply this, we give a self-contained proof for the convenience of the readers.
For $e\in G_R$ we denote by $H(\bar{e})$  the open  half-space in $\mathbb{D}$ bordered  by $C(\bar{e})$ which does not contain $R$.

\begin{lemma}\label{non-back}
 Let $(a_n)_{n=0}^{\infty}\in \Sigma^+$ and $n\ge 0$. We have $a_0\cdots a_n0\notin a_0\cdots a_{n}H(\overline a_{n+1})$, and for $k>n$ we have  $a_0\cdots a_k0\in a_0\cdots a_{n}H(\overline a_{n+1})$.
\end{lemma}

\begin{proof}  Clearly, $a_0\cdots a_{n}0 \notin a_0 \cdots a_n H(\overline a_{n+1})$ and $a_0\cdots a_{n+1}0 \in H(\overline a_{n+1})$. 
Since by Proposition~\ref{ser-lem} the elements of $E(\Sigma^+)$ are shortest,
$a_0\cdots a_{k}0 \in H(\overline a_{n+1})$  for $k>n$.  Hence, the lemma follows.
\end{proof}


\begin{lemma} We have
    \label{shrinking}
\[ \lim_{n\rightarrow \infty} \max_{a_0\cdots a_{n-1}\in E^n(\Sigma^+)}|\mathbb{S}^1\cap a_0\cdots a_{n-1}H(\overline a_n )|=0.\]    
In particular, we have
$\lim_{n\to\infty}\Theta_{\max,n}=0.$
\end{lemma}
\begin{proof}
Recall that each $a_0\cdots a_{n-1} \in E^n(\Sigma^+)$ has word length $n$ by Proposition~\ref{ser-lem}. 
For convenience we work in the upper half-plane  $\mathbb{H}$. We choose a conjugacy which maps a point in the  complement of the Euclidean closure of  the arc cut off by  $H(\overline a_0)$ in $\mathbb{S}^1$ to infinity.  
Put $r_n=\max_{a_0\cdots a_{n-1}\in E^n(\Sigma^+)} \textrm{diam}( a_0\cdots a_{n-1}R)$. Since $R$ is locally finite, we have $r_n\to0$ as $n\to\infty$.

If $C(\overline a_n)$ is a free side of $R$, then  $|\partial \mathbb{H}\cap a_0\cdots a_{n-1}H(\overline a_n) | \le r_n$.  If $C(\overline a_n)$  is not a free side, we  assume for simplicity that the side $s$ of $a_0 \cdots a_{n-1} R$ contained in $a_0 \cdots a_{n-1} C(\overline a_n)$ has one  vertex $v$ at infinity, and one vertex $v'$ in $\mathbb{H}$. Denote the other side of $a_0 \cdots a_{n-1} R$ emanating from the vertex $v'$ by $s'$, and  the endpoint of $s'$ not equal to $v'$ by $v''$. Denote by $s''$ the side of $a_0 \cdots a_{n-1} R$ emanating from the vertex $v''$ not equal to $s'$. Since the circular arcs containing $s$ and $s''$ are  disjoint \cite[Lemma~2.2]{BowSer79}, it is easy to see that  $|\partial \mathbb{H}\cap a_0\cdots a_{n-1}H(\overline a_n)| $ is bounded from above by the Euclidean distance of $v$ and $v''$. Since $v$ and $v''$ are vertices of the fundamental domain $a_0 \cdots a_{n-1} R$, we conclude $|\partial \mathbb{H}\cap a_0\cdots a_{n-1}H(\overline a_n)|\le r_n $.  The remaining cases can be treated in a similar fashion.   
The proof of the first assertion is complete. The second assertion follows from the first because 
$\Theta(a_0\cdots a_{n})$ is contained in the Euclidean closure of $a_0\cdots a_{n-1}H(\overline a_n)$.
\end{proof}

\subsection{Comparison of BS and cutting orbits}\label{conv-cut}
 Let $\gamma\in\mathscr{R}$ with
 $\gamma^+\in\Lambda$.
 Since $\Lambda$ is $G$-invariant and $\Lambda\subset \varDelta$,
 we obtain $\Lambda\subset \bigcap_{n=0}^{\infty} f^{-n}(\varDelta)$. Hence, $\gamma^+$ has an infinite $f$-expansion.  Let 
  $(a_n)_{n=0}^{\infty}$ denote the $f$-expansion of $\gamma^+$.
We call $(a_0\cdots a_n0)_{n=0}^\infty$ a {\it BS orbit} associated with $\gamma$. For BS orbits, the convergence is uniform in the following sense.

\begin{lemma}\label{a_nconverge}
 For any $\varepsilon>0$ there exists $n_0\geq1$ such that
if $\xi\in \Lambda$ has the $f$-expansion $(a_n)_{n=0}^{\infty}$, then for all $n\geq n_0$ we have
\[|a_0\cdots a_{n-1}0-\xi|<\varepsilon.\]
\end{lemma}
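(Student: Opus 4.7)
The key observation is that $f^{n}$ restricted to $\Theta_{n}:=\Theta(a_{0}\cdots a_{n-1})$ coincides with $\bar g_{n}:=\overline{a_{0}\cdots a_{n-1}}$, where $g_{n}:=a_{0}\cdots a_{n-1}\in G$; moreover $\eta\in\Theta_{n}$ by definition of the $f$-expansion. My plan is to establish the pointwise bound
\[
|a_{0}\cdots a_{n-1}(0)-\eta|\ll|\Theta_{n}|+\sqrt{|\Theta_{n}|},
\]
which, combined with the uniform decay $|\Theta_{n}|\leq\Theta_{\max,n}\to 0$ from Proposition~\ref{pointwise}, immediately yields the lemma: given $\epsilon>0$, pick $n_{0}$ so that the right-hand side is less than $\epsilon$ for every $n\geq n_{0}$, independently of $\eta$.

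The mechanism is the explicit boundary derivative formula for an isometry of $\mathbb{D}$. Setting $w:=g_{n}(0)=\bar g_{n}^{-1}(0)$ and writing $\bar g_{n}(z)=\lambda(z-w)/(1-\bar w z)$ for some $\lambda\in\mathbb{S}^{1}$, the identity $|1-\bar w\zeta|=|\zeta-w|$ for $|\zeta|=1$ gives
\[
|\bar g_{n}'(\zeta)|=\frac{1-|w|^{2}}{|\zeta-w|^{2}}\qquad(\zeta\in\mathbb{S}^{1}).
\]
Applying the mean value theorem to $\bar g_{n}$ on $\Theta_{n}$ together with the uniform lower bound $|\bar g_{n}(\Theta_{n})|\geq\alpha_{\min}$ from \eqref{lower-bound} produces a point $\zeta^{\ast}\in\Theta_{n}$ with $|\bar g_{n}'(\zeta^{\ast})|\geq\alpha_{\min}/|\Theta_{n}|$; combined with the displayed derivative formula and the trivial bound $1-|w|^{2}\leq 1$ this yields
\[
|\zeta^{\ast}-w|^{2}\leq|\Theta_{n}|/\alpha_{\min}.
\]
Since $\eta,\zeta^{\ast}\in\Theta_{n}$, the triangle inequality gives $|\eta-g_{n}(0)|\leq|\eta-\zeta^{\ast}|+|\zeta^{\ast}-w|\ll|\Theta_{n}|+\sqrt{|\Theta_{n}|}$, as claimed.

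The obstacle this strategy sidesteps is the absence of uniform expansion of $f$ when $G$ has a parabolic element. A naive iteration-based argument would bring in the distortion $D_{n}$ of Proposition~\ref{MILD}, and one has no a priori control on the product $D_{n}\cdot\Theta_{\max,n}$. In contrast, the Möbius identity translates the expansion data of $\bar g_{n}$ on the small arc $\Theta_{n}$ directly into geometric proximity between $w=g_{n}(0)$ and $\Theta_{n}$, and the uniform constant $\alpha_{\min}$ from \eqref{lower-bound} ensures that the final bound depends only on $|\Theta_{n}|$, hence only on $n$. This is precisely what guarantees the uniformity in $\eta$ asserted by the lemma.
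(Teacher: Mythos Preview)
Your argument is correct and constitutes a genuinely different route from the paper's proof. Both rely on the uniform lower bound $\alpha_{\min}$ from \eqref{lower-bound} and on the uniform decay of BS cylinders from Proposition~\ref{pointwise}, but the mechanisms for locating $w=g_n(0)$ near $\Theta_n$ differ. The paper argues geometrically: it observes that the angle at $w$ subtended by $\Theta_n$ equals $|\bar g_n(\Theta_n)|\geq\alpha_{\min}$ by conformality, then separately invokes the Floyd growth bounds \eqref{growth1}--\eqref{growth2} to force $|w|\to 1$ uniformly, and finally combines the wide angle with $|\Theta_n|\to 0$ to trap both $w$ and $\Theta_n$ in a small Euclidean ball. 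Your approach bypasses the angle picture and the growth bounds altogether: the explicit boundary derivative $|\bar g_n'(\zeta)|=(1-|w|^2)/|\zeta-w|^2$ converts the mean-value expansion estimate $|\bar g_n'(\zeta^\ast)|\geq\alpha_{\min}/|\Theta_n|$ directly into the Euclidean bound $|\zeta^\ast-w|\leq\sqrt{|\Theta_n|/\alpha_{\min}}$. This is more self-contained and yields an explicit rate, at the cost of being slightly less visual than the paper's hyperbolic-geometry argument.
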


\begin{proof} 
By Lemma~\ref{non-back} we have 
$a_0\cdots a_{n-1}0\in a_0\cdots a_{n-2}H(\overline a_{n-1})$. By Lemma~\ref{shrinking}, $\textrm{diam}(a_0\cdots a_{n-2}H(\overline a_{n-1}))$ tends to zero uniformly, as $n\rightarrow \infty$. Since $\xi$ has the $f$-expansion  $(a_n)_{n=0}^{\infty}$, it belongs to the Euclidean closure of $a_0\cdots a_{n-2}H(\overline a_{n-1})$ for each $n$. The lemma follows.
\end{proof}


 Let $(g_n)_n$ denote the finite or infinite cutting sequence of $\gamma$.
 We call $(g_0\cdots g_n0)_n$ the {\it  cutting orbit} associated with $\gamma$.  For free groups, the cutting orbit of $\gamma \in \mathscr{R}$  coincides with the $f$-expansion of $\gamma^+$. For non-free groups, this is not always the case.
  Nevertheless, they differ only slightly in the sense of the next lemma.
  
  \begin{lemma}\label{lem-parallel}
  Let $\gamma\in\mathscr{R}$ have the infinite cutting sequence 
  $(g_n)_{n=0}^\infty$ and let $(a_n)_{n=0}^\infty$ be the $f$-expansion of $\gamma^+$.  For any $n\geq0$,
 $g_0\cdots g_nR$ and 
$a_0\cdots a_{n}R$ share a common side of $N$, or else
share a common vertex of $N$ in $\mathbb D$.
\end{lemma}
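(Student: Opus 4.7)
The plan is to induct on $n$, writing $g_n = e_{i_0}\cdots e_{i_n}$ and $h_n = a_0\cdots a_n$, and to show that $g_n(R)$ and $h_n(R)$ share a side or a vertex of $N$.

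For the base case $n = 0$, I would examine both definitions geometrically. The cutting sequence tells me that $\gamma$ crosses from $R$ to $e_{i_0}(R)$ through the side with exterior label $e_{i_0}$, so $\gamma^+$ lies in the arc of $\mathbb S^1$ bounded by $P_{i_0}$ and $Q_{i_0+1}$ on the far side of $C(\bar e_{i_0})$ from $R$. The $f$-expansion tells me that $\gamma^+$ lies in the Bowen-Series partition arc corresponding to $a_0$. Combining these two constraints, together with the arrangement of the endpoints $P_i, Q_i$ on $\mathbb S^1$ and the even corner assumption, I expect to conclude that the sides of $R$ labeled $e_{i_0}$ and $a_0$ either coincide or are adjacent at a common vertex $v$ of $R$; in the former case $e_{i_0}(R) = a_0(R)$ and the claim is trivial, and in the latter both $e_{i_0}(R)$ and $a_0(R)$ contain the vertex $v\in N$.

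For the inductive step, I would use Proposition~\ref{ser-lem} (admissible $f$-expansions are shortest words with no anticlockwise half cycle), combined with the fact that cutting sequences are shortest (hence contain no long cycle) by \cite[Theorem~3.1(ii)]{Ser86}. Both $(e_{i_n})$ and $(a_n)$ thus give shortest representations in $G_R$, and any discrepancy between them arises at vertices of $N$: when the cutting sequence traverses a vertex $v$ anticlockwise via a half cycle, the $f$-expansion uses the corresponding clockwise half cycle, which by the vertex relation $e_{j_1}\cdots e_{j_{2n(v)}} = 1$ represents the same group element. Tracking these local substitutions step by step, the prefixes $g_n$ and $h_n$ differ at each $n$ by a product of rotations around a common vertex $v_n$ of $N$, so $g_n(R)$ and $h_n(R)$ both lie in the star of $v_n$ in the tessellation and share that vertex (or share sides when $g_n = h_n$).

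The main obstacle will be making the combinatorial correspondence between half cycles in the cutting sequence and in the $f$-expansion precise at each intermediate step, particularly showing that once the two sequences branch at a vertex $v$, the copies $g_n(R)$ and $h_n(R)$ remain in the star of $v$ at every step before the sequences re-merge. A naive translation argument fails here, since when $e_{i_0}\neq a_0$ the two shifts $\bar e_{i_0}$ and $\bar a_0$ do not agree and $\bar e_{i_0}(\gamma)$ need not even intersect $R$. I therefore anticipate a careful local analysis around each vertex event, using Proposition~\ref{ser-lem} to rule out pathological configurations and the even corner assumption to control the geometry at vertices.
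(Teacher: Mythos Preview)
Your inductive plan is a reasonable route in principle, but it is \emph{not} the paper's approach. The paper's proof is two lines: it observes (from Lemmas~\ref{cut-converge} and \ref{a_nconverge}) that the cutting orbit and the BS orbit both converge to $\gamma^+$, and then invokes \cite[Proposition~3.2]{Ser86} directly. Series' Proposition~3.2 is exactly the statement that two shortest infinite words with the same limit point track each other in the way asserted here; the combinatorial work you are proposing to do --- tracking half-cycle substitutions and showing the two prefixes stay in the star of a common vertex throughout each discrepancy --- is essentially the content of Series' proof. So your proposal amounts to reproving that proposition from scratch rather than citing it.

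As a plan for such a direct proof, your outline is on the right track but the part you flag as ``the main obstacle'' really is the whole difficulty, and you have not yet resolved it. The issue is that a single half-cycle discrepancy can span several letters, and successive discrepancies can overlap or abut (the end of one half-cycle can feed into a vertex where the next one begins), so a naive ``branch, stay in the star, re-merge'' picture is too coarse. Series handles this by a careful case analysis of how admissible words (shortest, no anticlockwise half cycles) interact with the vertex relators; if you want a self-contained argument you will need comparable detail, not just the observation that the two words differ by clockwise/anticlockwise substitutions. The base case is also slightly more delicate than you indicate: the arc cut off by $C(\bar e_{i_0})$ is $[P_{i_0},Q_{i_0+1}]$, while the BS partition arc for $a_0$ is $[P_{a_0},P_{a_0+1})$, and showing these force $a_0$ to be $e_{i_0}$ or an adjacent label already uses the even-corner geometry in a way you should make explicit.
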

\begin{proof}
By Lemmas~\ref{cut-converge} and \ref{a_nconverge}, the cutting orbit 
 $(g_0\cdots g_n0)_{n=0}^\infty$
and
the BS orbit $(a_0\cdots a_{n}0)_{n=0}^\infty$ converge to the same point $\gamma^+$.
Hence,
the conclusion is a consequence of  \cite[Proposition~3.2]{Ser86}
and \cite[Theorem~3.1]{Ser86}.
\end{proof}

\subsection{Mild distortion on BS cylinders}\label{milddist}
For $n\geq1$ define
\[D_{n}=\sup_{a_0\cdots a_{n-1}\in E^n(\Sigma^+)}\sup_{\xi,\eta\in \Theta(a_0\cdots a_{n-1} )}\frac{|(f^{n})'\xi|}{|(f^{n})'\eta|}.\]
If $G$ has no parabolic element, some power of $f$ is uniformly expanding \cite[Theorem~5.1]{Ser81b}, and so $D_{n}$ is uniformly bounded. If $G$ has a parabolic element, $D_{n}$ grows as $n$ increases but sub-exponentially, which suffices for all our purposes. We say $f$ has {\it mild distortion} if $\log D_{n}=o(n)$ $(n\to\infty)$.
 \begin{prop}\label{MILD}
The Bowen-Series map $f$ has mild distortion.
\end{prop}
\begin{proof}
 Let $n\geq2$ and $a_0\cdots a_{n-1}\in E^n(\Sigma^+)$. By the chain rule and the mean value theorem for $\log|f'|$, for 
$\xi,\eta\in \Theta(a_0\cdots a_{n-1})$ we have
\[\begin{split}\log\frac{|(f^{n})'\xi|}{|(f^{n})'\eta|}&\ll 
\sum_{j=0}^{n-1}|f^j(\Theta(a_0\cdots a_{n-1}))|
\ll 
\sum_{j=0}^{n-2}\Theta_{\max,n-j}+2\pi,\end{split}\]
which is $o(n)$ by
Lemma~\ref{shrinking}.
\end{proof}

\subsection{Decay estimate of BS cylinders}\label{decayest}
The next proposition connects the size of a BS $n$-cylinder with the corresponding growth rate. 
There exists a constant  
$\theta_{0}>0$  such that
for $n\geq1$ and $a_0\cdots a_{n-1}\in E^n(\Sigma^+)$,     \begin{equation}\label{lower-bound}0<\theta_0\leq |\overline {a_{0}\cdots a_{n-1}}\Theta(a_0\cdots a_{n-1})|\leq 2\pi.
  \end{equation}

Put 
\[  n(R)=\max(\{n(v)\colon \text{$v$ is a vertex of $R$ in $\mathbb D$}\}\cup\{1\}).\]
\begin{prop}\label{z-series}
For any $\gamma\in\mathscr{R}$ 
with the cutting sequence 
  $(g_n)_{n=0}^\infty$ and the $f$-expansion $(a_n)_{n=0}^{\infty}$ of $\gamma^+$,
we have
\[1
\ll\frac{|\Theta(a_0\cdots a_{n-1})|}{\exp(-t_n(\gamma)  )}\ll
D_n.\]
\end{prop}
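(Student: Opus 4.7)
The plan is to identify $f^{n}|_{\Theta(a_{0}\cdots a_{n-1})}$ with the M\"obius transformation $g:=\overline{a_{0}\cdots a_{n-1}}$ of $\mathbb D$ (so $g^{-1}(0)=a_{0}\cdots a_{n-1}(0)$), estimate the cylinder size in terms of the hyperbolic displacement $d:=d(0,a_{0}\cdots a_{n-1}(0))$, and then pass from $d$ to $t_{n}(\gamma)$ via Lemma~\ref{lem-parallel}. Write $\Theta=\Theta(a_{0}\cdots a_{n-1})$. By \eqref{lower-bound}, $|g(\Theta)|\in[\alpha_{\rm min},\alpha_{\rm max}]$, and on the unit circle the M\"obius derivative reads
\[
|g'(\xi)|=\frac{1-|g^{-1}(0)|^{2}}{|\xi-g^{-1}(0)|^{2}},\qquad \xi\in\mathbb S^{1}.
\]
Combined with the hyperbolic identity $e^{d(0,w)}=(1+|w|)/(1-|w|)$ for $w\in\mathbb D$ and the trivial bound $|\xi-g^{-1}(0)|\geq 1-|g^{-1}(0)|$, this yields the \emph{uniform} pointwise estimate $|g'(\xi)|\leq e^{d}$ on all of $\mathbb S^{1}$.

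The lower bound $|\Theta|\gg e^{-d}$ is immediate from this uniform bound and the mean value theorem: $\alpha_{\rm min}\leq|g(\Theta)|\leq\max_{\Theta}|g'|\cdot|\Theta|\leq e^{d}|\Theta|$. No distortion factor is needed on this side.

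For the upper bound $|\Theta|\ll D_{n}e^{-d}$, I plan to exhibit a point $\xi^{*}\in\Theta$ with $|g'(\xi^{*})|\gg e^{d}$, and then use the definition of $D_{n}$ (Proposition~\ref{MILD}) to propagate the bound: $\min_{\Theta}|g'|\geq|g'(\xi^{*})|/D_{n}\gg e^{d}/D_{n}$, whence $|\Theta|\leq\alpha_{\rm max}/\min_{\Theta}|g'|\ll D_{n}e^{-d}$. The natural choice is $\xi^{*}=\gamma^{+}\in\Theta$; the desired estimate $|g'(\gamma^{+})|\gg e^{d}$ amounts to the Euclidean closeness $|\gamma^{+}-a_{0}\cdots a_{n-1}(0)|\asymp 1-|a_{0}\cdots a_{n-1}(0)|\asymp e^{-d}$, i.e., to the BS orbit approaching $\gamma^{+}$ within a uniform hyperbolic tube around the geodesic $\gamma$. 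I would extract this from Lemma~\ref{cut-converge}, Lemma~\ref{a_nconverge} and the parallelism in Lemma~\ref{lem-parallel}. Finally, Lemma~\ref{lem-parallel} also shows that $(a_{0}\cdots a_{n-1})^{-1}e_{i_{0}}\cdots e_{i_{n-1}}$ lies in the finite set of $h\in G$ for which $h(R)$ shares a side or vertex with $R$, so the triangle inequality yields $|d-t_{n}(\gamma)|\ll 1$ and hence $e^{-d}\asymp e^{-t_{n}(\gamma)}$.

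The main obstacle is establishing $|g'(\gamma^{+})|\gg e^{d}$. When $G$ has no parabolic element, all tiles have uniformly bounded hyperbolic diameter, the BS orbit trails $\gamma$ within a fixed tube, and the required Euclidean closeness is straightforward. When $G$ has parabolic elements, tiles excursing into cusps are hyperbolically long and thin, and the BS orbit may stray far from $\gamma$ during such excursions; the asymmetric placement of $D_{n}$ (only on the upper bound) is precisely tailored so that the growth of $D_{n}$ along cusp excursions absorbs the loss.
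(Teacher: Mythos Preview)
Your reduction $e^{-d}\asymp e^{-t_{n}(\gamma)}$ via Lemma~\ref{lem-parallel} is correct and matches the paper exactly. Your lower bound argument is also correct and in fact cleaner than the paper's, which uses a buffer decomposition for both directions; the pointwise bound $|g'|\le e^{d}$ on all of $\mathbb S^{1}$ gives $|\Theta|\ge\alpha_{\min}e^{-d}$ immediately.

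The upper bound, however, has a genuine gap. The estimate $|g'(\gamma^{+})|\gg e^{d}$ is equivalent to the Stolz--angle condition $|\gamma^{+}-a_{0}\cdots a_{n-1}(0)|\ll 1-|a_{0}\cdots a_{n-1}(0)|$, and this \emph{fails} during cusp excursions. Concretely, if the first $n$ symbols of the $f$-expansion follow a parabolic cycle with fixed point $p$, then $1-|a_{0}\cdots a_{n-1}(0)|\asymp n^{-2}$ (so $e^{d}\asymp n^{2}$), while $\Theta$ is an arc of length $\asymp n^{-1}$ with one endpoint at $p$; for $\gamma^{+}$ near $p$ one gets $|g'(\gamma^{+})|\asymp 1$, not $\asymp n^{2}$. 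The lemmas you cite give only qualitative convergence $a_{0}\cdots a_{n-1}(0)\to\gamma^{+}$, not the quantitative cone condition. Your final paragraph does not close the gap either: you have already spent the factor $D_{n}$ in passing from $|g'(\xi^{*})|$ to $\min_{\Theta}|g'|$, so a further loss at $\xi^{*}$ itself is not absorbed.

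The paper avoids this by not committing to any particular point of $\Theta$. It splits $\Theta$ into three sub-arcs $\Theta^{+},\Theta^{0},\Theta^{-}$ whose $g$-images have equal length, and works with the middle piece. After a rotation placing $a_{0}\cdots a_{n-1}(0)$ on the negative real axis, Lemma~\ref{a_nconverge} ensures $1\notin\Theta$ for $n$ large; under the hyperbolic translation $T$ sending $a_{0}\cdots a_{n-1}(0)$ to $0$, the arc $T(\Theta^{0})$ has length $\alpha/3$ and is bounded away from the bad point $1$ by at least $\alpha_{\min}/3$. On that region $|(T^{-1})'|\asymp e^{-d}$, whence $|\Theta^{0}|\asymp e^{-d}$, and the mean value bound $|\Theta^{0}|/|\Theta|\in[1/(3D_{n}),1]$ yields both inequalities at once. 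Your strategy becomes correct if you replace $\xi^{*}=\gamma^{+}$ by any point in this middle third.
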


\begin{proof}

By Lemma~\ref{lem-parallel},
the copies
$g_0\cdots g_{n-1}R$ and
$a_0\cdots a_{n-1}R$ of $R$ share a common side of $N$, or else
share a common vertex of $N$ in $\mathbb D$.
The triangle inequality yields
\[|t_n(\gamma)-d(0,a_0\cdots a_{n-1}0)|\leq  n(R)\max\{d(0,g0)\colon g\in G_R\} \ll1.\]
Hence, it suffices to show that for all $n\geq1$ and $a_0\cdots a_{n-1}\in E^n(\Sigma^+)$,
\begin{equation}\label{z-series-eq}1
\ll\frac{|\Theta(a_0\cdots a_{n-1})|}{\exp(-d(0,a_0\cdots a_{n-1}0))}\ll
D_n.\end{equation}

Let $n\geq1$ and let $a_0\cdots a_{n-1}\in E^n(\Sigma^+)$.
Let  $\xi_+$ and $\xi_-$ denote the boundary points of $\Theta(a_0\cdots a_{n-1})$.
Let  $\theta>0$ denote the angle between the geodesic arcs joining $a_0\cdots a_{n-1}0$ to
 $\xi_+$ and $\xi_-$.
Since all $a_0,\ldots, a_{n-1}$ are M\"obius transformations, \eqref{lower-bound} gives
\begin{equation}\label{angle}\theta_{0}\leq \theta\leq 2\pi .\end{equation}

Split $\Theta(a_0\cdots a_{n-1})$ into three disjoint arcs
 $\Theta_+$,
$\Theta_0$,
$\Theta_-$
so that $\xi_+\in \Theta_+$, 
$\xi_-\in \Theta_-$ 
and the
$\overline{a_{0}\cdots a_{n-1}}$-images of the three arcs have the same Euclidean lengths.
We use 
$\Theta_+$ and $\Theta_-$ as a buffer, and estimate $|\Theta_0|$
rather than $|\Theta(a_0\cdots a_{n-1})|$ itself.
The mean value theorem gives
\begin{equation}\label{Z-ineq}\frac{1 }{3D_n}\leq\frac{|\Theta_0 |}{|\Theta(a_0\cdots a_{n-1})|}\leq\min\left\{\frac{D_n}{3},1\right\}.\end{equation}
 Let $d(0,a_0\cdots a_{n-1}0)=r$. Rotate the Poincar\'e disk so that $a_0\cdots a_{n-1}0$ is placed on the negative part of the real axis.
 By Lemma~\ref{a_nconverge}, 
  there exists $n_0\geq1$ such that if $n\geq n_0$, then for any $a_0\cdots a_{n-1}\in E^{n}(\Sigma^+)$,
 $\Theta(a_0\cdots a_{n-1})$ is contained in
 the Euclidean open ball of radius $1/100$ about $-1$. In particular, 
 $\Theta(a_0\cdots a_{n-1})$ does not contain $1$.
 We apply the M\"obius transformation 
 $T\colon\mathbb P^1\to\mathbb P^1$ given by
\[T(z)=\frac{\cosh\frac{r}{2}z+\sinh\frac{r}{2}}{\sinh\frac{r}{2}z+\cosh\frac{r}{2}}.\]
This carries the four geodesics through $a_0\cdots a_{n-1}0$ separating $\Theta_+$, $\Theta_0$,
$\Theta_-$
to rays through $0$
at an equal angle $\theta/3$.
Since $1\notin \Theta(a_0\cdots a_{n-1})$ and $T(1)=1$,
$1\notin T(\Theta(a_0\cdots a_{n-1}))$ holds.
Therefore,
$T(\Theta_0)$ lies in the complement of the domain $\{z\in\mathbb D\colon |{\rm arg}(z)|\leq\theta_0/3\}$.
A  calculation shows
\[|(T^{-1})'z|=\left|\cosh\frac{r}{2}-{\rm Re}(z)\sinh\frac{r}{2}-\sqrt{-1}{\rm Im}(z)\sinh\frac{r}{2}\right|^{-2}.\]
Since $T(\Theta_0)$ is uniformly bounded away from $1$ in the Euclidean distance,
we have
$|(T^{-1})'z|\asymp e^{-r}$.
Since
$|\Theta_0|=\int_{T(\Theta_0)}|(T^{-1})'z||dz|$, this yields
\begin{equation}\label{Z-ineq2}|\Theta_0|\asymp \theta e^{-r}.\end{equation}
Combining \eqref{angle}, \eqref{Z-ineq} and \eqref{Z-ineq2} we obtain the desired double inequalities.
\end{proof}

\subsection{Equality of level sets, boundary of the $\mathscr{H}$-spectrum}\label{L=H}
The upper and lower pointwise Lyapunov exponents at a point $\xi\in\Lambda$ are given by
\[\overline\chi(\xi)=\limsup_{n\to\infty}\frac{1}{n}\log|(f^n)'\xi|\quad\text{and}\quad\underline{\chi}(\xi)=\liminf_{n\to\infty}\frac{1}{n}\log|(f^n)'\xi|,\]
respectively.
If $\overline\chi(\xi)=\underline{\chi}(\xi)$, this common value is called the {\it pointwise Lyapunov exponent} at $\eta$ and denoted by $\chi(\xi)$. For each $\alpha\in\mathbb R$,
define the level set
\[
\mathscr{L}(\alpha)=\left\{\xi\in\Lambda_c\colon\overline\chi(\xi)=\underline{\chi}(\xi)=\alpha\right\}.\]The next proposition indicates that
the level sets of homological growth rates and that of pointwise Lyapunov exponents coincide.
\begin{prop}\label{coincide-lem}
For every $\gamma\in\mathscr{R}$ such that $\gamma^+\in\Lambda_c$
and every $n\geq1$,
\begin{equation}\label{coin-lem-eq}
D_n^{-1}\ll
\exp(\log|(f^n)'\gamma^+|-t_n(\gamma))\ll
D_n^2.\end{equation}
In particular, for every $\alpha\geq0$, $\mathscr{H}(\alpha)=\mathscr{L}(\alpha).$
\end{prop}
\begin{proof}
Let $(a_n)_{n=0}^{\infty}$ denote the $f$-expansion of $\gamma^+$.
By the mean value theorem, there exists $\xi\in\Theta(a_0\cdots a_{n-1})$ such that $|(f^n)'\xi||\Theta(a_0\cdots a_{n-1})|=|f^n(\Theta(a_0\cdots a_{n-1}))|$. By \eqref{lower-bound} we have $|f^n(\Theta(a_0\cdots a_{n-1}))|\in[\theta_0,2\pi]$, and 
\[\theta_0D_n^{-1}|\Theta(a_0\cdots a_{n-1})|^{-1}\leq|(f^n)'\gamma^+|\leq 2\pi D_n|\Theta(a_0\cdots a_{n-1})|^{-1}.\]
This together with
Proposition~\ref{z-series} 
yields \eqref{coin-lem-eq}.
The rest of the assertions follows from
 Proposition~\ref{MILD}.
\end{proof}
\begin{remark}\label{coincide-rem}
By Proposition~\ref{coincide-lem}, the
level sets $\mathscr{H}(\alpha)$ are pairwise disjoint.
\end{remark}

 \begin{lemma}\label{alp-lem2}
We have $\alpha_-=0$ if and only if $G$ has a parabolic element.
\end{lemma}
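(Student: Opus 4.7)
The plan is to prove the two directions separately, with the ``only if'' direction being elementary and the ``if'' direction requiring a diagonal construction.

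\textbf{Necessity.} Assume $G$ has no parabolic element. By the remark preceding Proposition~\ref{ser-lem}, every cutting sequence $(e_{i_n})_{n=0}^\infty$ of a geodesic $\gamma\in\mathscr{R}$ is shortest, so the element $e_{i_0}\cdots e_{i_{n-1}}\in G$ has word length exactly $n$. Combined with \eqref{growth1} this gives $t_n(\gamma)\ge\alpha_1 n$ for every $\gamma\in\mathscr{R}$ and $n\ge 1$, hence $\alpha^-\ge\alpha_1>0$.

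\textbf{Sufficiency.} Assume $G$ has a parabolic element and let $P_i$ be a cusp of $R$, which is a neutral periodic point of $f$ of some period $p$. The plan is to exhibit $\eta\in\Lambda(G)$, not in the $G$-orbit of the cusps of $R$, with $\liminf_{n\to\infty}(1/n)\log|(f^n)'(\eta)|=0$, and then transfer this to $t_n$ via the proof of Proposition~\ref{coincide-lem}.

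Using the topological transitivity of $f|_{\Lambda(G)}$ (invoked in the proof of Lemma~\ref{no-attract}) together with the Markov structure of $f$, I would inductively build an $f$-expansion $\eta_f=(a_n)_{n=0}^\infty\in\Sigma^+$ containing blocks of the periodic word of $P_i$ of lengths $T_j\to\infty$ placed at positions $n_j$ satisfying $n_j/T_j\to 0$. Since $P_i$ is a neutral periodic point of the $C^2$ map $f$, a standard computation shows that $|(f^{T_j})'(f^{n_j}\eta)|$ grows at most polynomially in $T_j$, so
\[
\frac{1}{n_j+T_j}\log|(f^{n_j+T_j})'(\eta)|\le\frac{\log|(f^{n_j})'(\eta)|+O(\log T_j)}{n_j+T_j}\longrightarrow 0\quad(j\to\infty).
\]
At each inductive step, finitely many BS cylinders containing the countably many parabolic fixed points are avoided, so $\eta$ is not parabolic-fixed, and Lemma~\ref{cut-converge} yields $\gamma\in\mathscr{R}$ with $\gamma^+=\eta$ having an infinite cutting sequence. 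The key estimate \eqref{coin-lem-eq} combined with Proposition~\ref{MILD} then gives $|(1/n)\log|(f^n)'\eta|-(1/n)t_n(\gamma)|\to 0$, so $\liminf_n(1/n)t_n(\gamma)=0$, and hence $\alpha^-=0$.

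\textbf{Main obstacle.} The concrete construction of $\eta$ is the delicate part: one must simultaneously verify admissibility of the prescribed $f$-expansion via Proposition~\ref{ser-lem}, ensure $\eta$ avoids the countable set of parabolic fixed points (a Baire-type argument), and control the derivative near the neutral periodic point $P_i$ to guarantee the polynomial-in-$T$ bound used above. Items (i) and (ii) are standard Markov/shadowing arguments, while (iii) is a classical neutral-periodic-point estimate for $C^2$ maps.
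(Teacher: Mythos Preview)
Your proposal is correct, but both directions take different routes from the paper's two-line proof, which reads: if $G$ has no parabolic element, then some power of $f$ is uniformly expanding by \cite[Theorem~5.1]{Ser81b}; if $G$ has a parabolic element, then $R$ has a cusp, which is a neutral periodic point of $f$; either way the conclusion follows from Proposition~\ref{coincide-lem}.

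For the necessity direction your argument is actually cleaner than the paper's: you use only the elementary estimate \eqref{growth1} together with the fact that cutting sequences are shortest, avoiding the nontrivial uniform-expansion result of Series. For the sufficiency direction you work considerably harder than necessary. The paper simply observes that the cusp $P_i$ is itself a point of $\Lambda(G)$ with $\chi(P_i)=0$, and Proposition~\ref{coincide-lem} (whose proof, via \eqref{coin-lem-eq}, applies to any $\eta\in\Lambda(G)$ with infinite cutting sequence, including geodesics ending at a cusp) then yields $\alpha^-\le 0$ immediately. Your diagonal construction of a non-parabolic $\eta$ with $\underline\chi(\eta)=0$ is correct and standard, but it is only needed because you want to invoke the converse part of Lemma~\ref{cut-converge}; in fact a geodesic into a cusp does have an infinite cutting sequence, so that extra caution is unnecessary. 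What your construction buys is independence from this geometric fact, at the cost of the admissibility, avoidance, and derivative-control verifications you flag under ``Main obstacle''.
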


 \begin{proof}
 If $G$ has no parabolic element,
some power of $f$ is uniformly expanding \cite[Theorem~5.1]{Ser81b}. Hence, we have $\alpha_->0$ by Proposition~\ref{coincide-lem}.
If $G$ has a parabolic element then $R$ has a cusp, which is a neutral periodic point of $f$.
By \cite{JaeTak20},
the set  $\left\{x\in\Lambda\colon\overline\chi(x)=\underline{\chi}(x)=0\right\}$
has positive Hausdorff dimension, while
$\Lambda\setminus\Lambda_c$  is a countable set. Hence we have
 $\mathscr{L}(0)\neq\emptyset$, and 
 $\mathscr{H}(0)\neq\emptyset$ by Proposition~\ref{coincide-lem}.
 Hence we obtain
 $\alpha_-=0$.
\end{proof}
 
\begin{remark}  In the definition of $t_n(\gamma)$, we may replace the cutting sequence of $\gamma$ by the $f$-expansion of the positive endpoint $\gamma^+$. By Lemma~\ref{lem-parallel} this does not change the level sets $\mathscr{H}(\alpha)$. 
\end{remark}

\section{Finite Markov structures}
In this section, starting with the definition of Markov maps in Section~\ref{markov-map},
we construct a finite Markov partition for the Bowen-Series map $f$
 in Section~\ref{markov-sec}
by slightly modifying the Markov partition constructed in \cite{BowSer79}. In Section~\ref{equality} we use  this finite Markov partition to  identify the maximal invariant set of the Markov map $f$ as the limit set of $G$.
In Section~\ref{gen-p} we introduce a geometric pressure function
 using the  free energy of $f$-invariant Borel probability measures, and show 
 that the generalized Poincar\'e  exponent coincides with the geometric pressure.
\subsection{Markov maps}\label{markov-map}
Let $S$ be a discrete 
set with $\#S\ge 2$.
   A {\it  Markov map} is a map $F\colon \Gamma\to \mathbb S^1$ such that the following holds:
    \begin{itemize}
 \item[(M0)]
   There exists a family $(\Gamma(a))_{a\in S}$ 
of pairwise disjoint arcs in $\mathbb S^1$ 
such that $\Gamma=\bigcup_{a\in S}\Gamma(a)$.

 \item[(M1)] For each $a\in S$, the restriction 
 $F|_{\Gamma(a)}$
extends to a $C^1$ diffeomorphism from ${\rm cl}(\Gamma(a))$ onto its image. 
 \item[(M2)] If $a,b\in S$ and $F(\Gamma(a))\cap \Gamma(b)$ 
 has non-empty interior,
 then $F(\Gamma(a))\supset \Gamma(b)$.
 
 \end{itemize}
 The family  $(\Gamma(a))_{a\in S}$ of arcs is called a {\it  Markov partition} of $F$.

 Condition (M2) determines a transition matrix $(M_{ab})$ over the countable alphabet $S$ by the rule $M_{ab}=1$ if $F(\Gamma(a))\supset \Gamma(b)$ and $M_{ab}=0$ otherwise.
 This transition matrix determines a countable topological Markov shift
 $Y=Y(F,(\Gamma(a))_{a\in S})$ by
\begin{equation}\label{m-shift}Y=\{y=(y_n)_{n=0}^{\infty}\in S^{\mathbb N}\colon  M_{y_ny_{n+1}}=1 \ 
\text{for } n \ge 0\}.\end{equation}
We endow $Y$
with the metric $d_{Y}(y,z)=
\exp(-\inf\{n\geq0\colon y_n\neq z_n\})$
where we set $e^{-\infty}=0$.
For $n\ge 1$ and $\omega_0\cdots \omega_{n-1}\in S^n$, write 
\begin{equation}\label{sym-cyl}[\omega_0\cdots\omega_{n-1}]=\{y\in   Y\colon y_k=\omega_k\ \text{for }0\leq k\leq n-1 \}.\end{equation}
Subsets of $Y$ of this form are called 
cylinders. The collection of all cylinders forms a base of the topology on $Y$.

  For $\omega\in S^m$ and $\kappa\in S^n$,
 write $\omega\kappa$ for
$\omega_0\cdots \omega_{m-1}\kappa_0\cdots \kappa_{n-1}\in S^{m+n}$. 
 For convenience, put $E^0=\{ \emptyset\}$,  $|\emptyset|=0$, and  $\omega\emptyset=\omega=\emptyset\omega$ 
 for  all  $\omega\in E(Y)$.
 The Markov map $F$ is {\it  finitely irreducible} \cite{MauUrb03} if there exists a finite subset  $\Xi$ of $E(Y)\cup E^0$  such that
for all $\omega,\kappa\in E(Y)$ there exists $\lambda\in \Xi$ such that $\omega\lambda \kappa\in E(Y)$. 

 The symbolic dynamics and the dynamics of $F$ are related by the coding map $\pi_Y\colon Y\to \mathbb S^1$ given by
  \begin{equation}\label{code-map}\pi_Y((y_n)_{n=0}^{\infty})\in 
  \bigcap_{n=1}^{\infty}{\rm cl}(\Gamma(y_0\cdots y_{n-1})),\end{equation}
 where 
\begin{equation}\label{cyl}\Gamma(y_0\cdots y_{n-1})=\bigcap_{k=0}^{n-1} F^{-k}(\Gamma(y_k)).\end{equation}
We shall always assume that the Markov map $F$ has {\it decay of cylinders} \cite{JaeTak20}, that is, the right-hand side in \eqref{code-map} is a singleton. 
  We will treat two Markov maps
  introduced in Sections~\ref{markov-sec} and \ref{c-ind}.

\subsection{Construction of a finite Markov partition for the Bowen-Series map}\label{markov-sec}
We recall the construction of a Markov partition for the Bowen-Series map carried out in \cite{BowSer79}. Our presentation of this is a slightly 
expanded version so as to include groups of the second kind. All lemmas quoted from \cite{BowSer79} below remain valid for groups of the second kind.

A point $v\in\mathbb S^1$ is a {\it  proper vertex} of $R$   at infinity if $v$ is the common endpoint of two sides of $R$. A point $v\in\mathbb S^1$ is called an {\it  improper vertex} of $R$ at infinity if $v$ is the common endpoint of a side and a free side of $R$.
A proper vertex  at infinity is also called a {\it  cusp}. 
The set of all cusps of $R$ is denoted by $V_c$. 
Note that each $v\in V_c$ is a fixed point of some parabolic element of $G$. Conversely, if $G$ has a parabolic element, then $V_c$ is non-empty. 
 Let $V$ denote the set of all vertices of $R$ in $\mathbb{D}\cup \mathbb{S}^1$.
 
For each vertex $v\in V$ we denote by $N(v)$ the set of geodesics in $N$ 
passing through $v$, and by $W(v)$ the set of points where the geodesics in  $N(v)$ meet $\mathbb{S}^1$. 
The set $\bigcup_{v\in V}W(v)$ is $f$-invariant \cite[Lemma~2.3]{BowSer79}
and hence induces a Markov partition
for $f$. This partition is an infinite partition 
if and only if $R$ has a cusp.

If $R$ has a cusp,
 $f$ is not finitely irreducible with respect to this Markov partition, and so results on the multifractal analysis in \cite{JaeTak20} are not directly applicable.
In order to make use of the results in \cite{JaeTak20}, 
we construct a coarser  Markov partition below
with respect to which $f$ becomes finitely irreducible.

Let $v\in V_c$. There exists  $i\in\mathbb Z$ such that $v$ is contained in $C(\bar e_{i-1})\cap C(\bar e_{i})$.  We denote the arcs of $\mathbb S^1$ cut-off by successive points
of $W(v)$ in anticlockwise order from   $Q_{i+1}$ to $Q_{i}=v$ as  $L_1(v),L_2(v),\ldots$,
and anticlockwise from  
$Q_{i+1}$ to $Q_{i}$ by
$R_1(v),R_2(v),\ldots$, as in \textsc{Figure}~\ref{fig:BS}.

For each $v\in V_c$ we define
\[L(v)=\overline{ \bigcup_{r\geq 2}L_r(v)}\quad \text{and}\quad R(v)=\overline{\bigcup_{r\geq 3}R_r(v)}.\]
 For $v\in V$ we define 
\[W'(v)=\begin{cases}W(v)&\text{if $v\notin V_c$,}\\
\partial L_1(v)\cup\partial L(v)
\cup\partial R_2(v)\cup\partial R(v)&\text{if $v\in V_c$,}\end{cases}\]
and
\[W'=\bigcup_{v\in V}W'(v).\]
Note that $W'$ is a finite subset of $\bigcup_{v\in V}W(v)$.  As in \cite[Lemma~2.3]{BowSer79}, one verifies the following lemma. 
\begin{lemma}\label{endpoints-Markov}
We have
$f(W')\subset W'$. 
\end{lemma}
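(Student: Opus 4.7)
The plan is to reduce the invariance of $W'$ to the invariance $f(W)\subset W$ already proved in \cite[Lemma~2.3]{BowSer79}, and then close the remaining gap at cusps by an explicit computation. The set $W'$ coincides with $W$ at non-cusp vertices and differs from $W$ only at the finitely many cusps, where it retains a finite set of ``first generation'' partition points instead of the full infinite family $W(v)$.

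First I would treat non-cusp vertices. For $v\in V\setminus V_c$ we have $W'(v)=W(v)\subset W$, so every $x\in W'(v)$ already has $f(x)\in W$. More precisely, $x$ lies in a partition interval on which $f$ equals a single generator $\bar e_i\in G_R$, and $f(x)=\bar e_i(x)\in W(v')$ where $v'=\bar e_i(v)$ is another vertex of a copy of $R$. If $v'\in V\setminus V_c$ then $W(v')=W'(v')\subset W'$ and we are done. If $v'\in V_c$ is a cusp, we need to verify that $\bar e_i(x)$ is actually one of the finitely many retained points at $v'$. This follows because $x$ is an endpoint of a geodesic in $N$ adjacent to a copy of $R$ sharing the vertex $v$, so $\bar e_i(x)$ is an endpoint of a geodesic in $N$ adjacent to a copy of $R$ sharing the cusp $v'$; by the definition of the arcs $L_r(v'),R_r(v')$, such an endpoint necessarily belongs to $\partial L_1(v')\cup\partial L(v')\cup\partial R_2(v')\cup\partial R(v')$.

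Second, for a cusp vertex $v\in V_c$ the set $W'(v)$ is explicitly finite: it consists of $v$ together with the non-$v$ endpoints of $L_1(v), L(v), R_2(v)$ and $R(v)$. I would check each of these points in turn. The cusp $v$ is a vertex of a neighboring copy of $R$, so $f(v)$ is another cusp and hence lies in $V_c\subset W'$. Each of the remaining points lies in a unique partition interval on which $f$ equals a specific generator $\bar e\in G_R$, and applying $\bar e$ yields either a vertex of a copy of $R$ (which reduces to the previous case) or a first-generation partition point at the cusp $\bar e(v)$. The success of this identification rests on the fact that $\bar e$ realizes the inverse of the relevant half of the parabolic element stabilizing $v$, so it shifts the enumeration $L_r,R_r$ by a fixed integer amount; the asymmetric choice of retained arcs ($L_1$ on the $L$-side and $R_1,R_2$ on the $R$-side) is calibrated precisely to absorb this shift.

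The main obstacle is exactly this cusp calibration: one must verify that retaining one arc on the $L$-side and two arcs on the $R$-side are the correct counts. This asymmetry reflects the fact that the two sides of a cusp $v$ belong to different partition intervals of $f$, so the generators realizing $f$ on the two sides are distinct and translate the parabolic orbit of partition points by different amounts. Once this combinatorial point is settled, $f(W')\subset W'$ follows by a direct unfolding of the definitions together with the inclusion $f(W)\subset W$.
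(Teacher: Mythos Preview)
Your plan matches the paper's in spirit and correctly isolates the shift-by-one formula $f(L_r(v_i))=L_{r-1}(\bar e_i v_i)$, $f(R_r(v_i))=R_{r-1}(\bar e_{i-1}v_i)$ from \cite[(2.4.1)]{BowSer79} together with the asymmetric retention of arcs at cusps. The cusp paragraph, where you check the five points of $W'(v)$ individually, is essentially how the paper proceeds.

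There is however a gap in your non-cusp paragraph. You take $x\in W'(v)$ for an arbitrary non-cusp $v=v_j\in V$, let $i$ be the index with $x\in[P_i,P_{i+1})$, and then treat $v'=\bar e_i(v)$ as a vertex of $R$. But $\bar e_i$ carries only the two vertices $v_i,v_{i+1}$ of its own side back into $V$; for other $v$ the image $\bar e_i(v)$ is merely a vertex of the net $N$, and the conclusion $f(x)\in W'(v')$ is not defined. The shift formula does force $i\in\{j-1,j\}$ for all points of $W(v_j)$ \emph{except} $Q_{j+1}$, the common endpoint of $L_1(v_j)$ and $R_1(v_j)$, which may lie in a remote interval. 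The paper singles out precisely this point and disposes of $f(Q_{i+1})$ by a separate case analysis (is $P_{i+1}<Q_{i+1}$? is $v_{i+1}$ a cusp? is $Q_{i+1}$ an improper vertex?); your plan does not isolate or treat this exceptional point in the non-cusp case. Incidentally, your worry about the sub-case $v'\in V_c$ inside the non-cusp paragraph is moot: $\bar e_i$ is an isometry and preserves the cusp/non-cusp type of vertices.
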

 We define a partition of $\varDelta$ into arcs 
 with endpoints given by two consecutive points in $W'$.
We choose all partition elements to be left-closed and right-open, in anticlockwise order. 
We label the partition elements by integers in a finite subset $S$ of $\mathbb N$, and
denote the element labeled with $a\in S$ by $\varDelta(a)$. By the arguments in  \cite[Lemma~ 2.5]{BowSer79} we obtain the  following important fact. 

\begin{prop} \label{BS-Markov} The Bowen-Series map $f:\varDelta\rightarrow \mathbb{S}^1$ defines a finitely irreducible Markov map with a finite Markov partition 
$(\varDelta(a))_{a\in S}$.
\end{prop}

The Bowen-Series map $f$
determines by \eqref{m-shift} a finitely irreducible Markov shift 
\[X=X(f,(\varDelta(a))_{a\in S}).\]
 The left shift $\sigma:X\rightarrow X$ is given by 
$(\sigma x)_n=x_{n+1}$ for $n \ge 0$.
  By Lemma~\ref{shrinking}, 
 the coding map $\pi=\pi_X$ given
 by \eqref{code-map} is well defined and continuous.
 We have 
 \begin{equation}\label{conju}f\circ \pi=\pi\circ\sigma.\end{equation}

  
BS cylinders and the cylinders in $X$ are related as follows.  
For each $a_0\cdots a_{n-1}\in E^{n}(\Sigma^+)$, 
the corresponding BS $n$-cylinder $\Theta(a_0\cdots a_{n-1})$ is the union of finitely many
 $n$-cylinders in $X$, the number of which is  at most $2n(R)$. 
Conversely,
for each $\omega_0\cdots\omega_{n-1}\in E^{n}(X)$ there exists a unique element
$a_0\cdots a_{n-1}$ of $E^n(\Sigma^+)$
such that $\varDelta(\omega_0\cdots\omega_{n-1})\subset\Theta(a_0\cdots a_{n-1})$. 
For convenience, we will sometimes identify $\omega_0\cdots \omega_{n-1}$ with the M\"obius transformation $a_0\cdots a_{n-1}$ in $G$, and 
write
$\Theta(\omega_0\cdots\omega_{n-1})$
for $\Theta(a_0\cdots a_{n-1})$. 

\subsection{Identifying the maximal invariant set}\label{equality}
  The proposition below asserts
 that the maximal invariant set of $f$ coincides with the limit set of $G$.
This clearly holds for groups of the first kind, and is known 
for free groups of the second kind \cite[Lemma~2.2]{Ser86}.
\begin{prop} \label{BSlimitset}
We have
\[\Lambda=\bigcap_{n=0}^{\infty} f^{-n}(\varDelta)=\pi(X).\]
\end{prop}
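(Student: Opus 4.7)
The plan is to establish the equality $\Lambda(G)=\bigcap_{n\ge 0}f^{-n}(\varDelta)$ by a pair of inclusions, and then separately verify $\bigcap_{n\ge 0}f^{-n}(\varDelta)\subset\pi(X)$. For $\Lambda(G)\subset\bigcap_{n\ge 0}f^{-n}(\varDelta)$, the first-kind case is immediate since $\varDelta=\mathbb S^{1}$. For second-kind groups, I would note that the half-open arcs $[Q_{i},P_{i})$ with $P_{i}>Q_{i}$ removed to form $\varDelta$ are exactly the free sides of $R$ and therefore lie in the ordinary set, so $\Lambda(G)\subset\varDelta$. Since $f$ coincides with an element of $G$ on each piece of its defining partition and $\Lambda(G)$ is $G$-invariant, $f(\Lambda(G))\subset\Lambda(G)\subset\varDelta$, and induction on $n$ finishes this inclusion.

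For the reverse inclusion $\bigcap_{n\ge 0}f^{-n}(\varDelta)\subset\Lambda(G)$, pick $\eta$ in the intersection and define its $f$-expansion $(a_{n})_{n=0}^{\infty}\in G_{R}^{\mathbb N}$ as in \eqref{BS-first}--\eqref{BS-second}, so that $\eta\in\Theta(a_{0}\cdots a_{n-1})$ for every $n\ge 1$. The target is to show $a_{0}\cdots a_{n-1}(0)\to\eta$ in the Euclidean topology: Proposition~\ref{pointwise} gives $|\Theta(a_{0}\cdots a_{n-1})|\to 0$, and the argument of Lemma~\ref{a_nconverge}---which relies only on \eqref{lower-bound}, Proposition~\ref{pointwise}, and the displacement bounds \eqref{growth1}--\eqref{growth2}---produces the convergence. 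By Proposition~\ref{ser-lem}, each word $a_{0}\cdots a_{n-1}$ is shortest in $G_{R}$, so $|a_{0}\cdots a_{n-1}|\to\infty$ and hence $d(0,a_{0}\cdots a_{n-1}(0))\to\infty$. Therefore $\eta$ is a Euclidean accumulation point of the orbit $G(0)$ on $\mathbb S^{1}$, i.e., $\eta\in\Lambda(G)$.

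For $\bigcap_{n\ge 0}f^{-n}(\varDelta)\subset\pi(X)$, the finer Markov partition $(\varDelta(a))_{a\in S}$ of Section~\ref{markov-sec} attaches to $\eta$ a unique $\omega=(\omega_{n})_{n=0}^{\infty}$ in $S^{\mathbb N}$ via $f^{n}(\eta)\in\varDelta(\omega_{n})$; property~(M2) forces $\omega\in X$. Since each cylinder $\varDelta(\omega_{0}\cdots\omega_{n-1})$ is contained in the BS cylinder $\Theta(a_{0}\cdots a_{n-1})$ from the $f$-expansion of $\eta$, Proposition~\ref{pointwise} makes the closed cylinders $\mathrm{cl}(\varDelta(\omega_{0}\cdots\omega_{n-1}))$ nest down to $\{\eta\}$, and then \eqref{code-map} delivers $\pi(\omega)=\eta$. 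The subtle point in the plan is a mild apparent circularity: Section~2 defines $\Sigma^{+}$ (and thus $E^{n}(\Sigma^{+})$) in terms of points already in $\Lambda(G)$, while the reverse-inclusion step above starts only from $\eta\in\bigcap_{n\ge 0}f^{-n}(\varDelta)$. I expect to dispose of this by re-examining the Section~2 proofs and observing that they invoke only Markov/geometric features of cylinder words (shortness, absence of anticlockwise half cycles, Markov admissibility, and absence of attracting periodic points via Lemma~\ref{no-attract}), all verifiable for any infinite $f$-expansion before identifying its endpoint with a limit point.
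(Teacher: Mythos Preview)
Your proposal is correct, and the inclusion $\Lambda(G)\subset\bigcap_{n\ge 0}f^{-n}(\varDelta)$ as well as the final step $\bigcap_{n\ge 0}f^{-n}(\varDelta)\subset\pi(X)$ match the paper almost verbatim (the latter is isolated in the paper as a separate lemma). The genuine difference lies in the reverse inclusion $\bigcap_{n\ge 0}f^{-n}(\varDelta)\subset\Lambda(G)$.

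You argue directly: form the $f$-expansion of an arbitrary $\eta$ in the intersection, invoke the Section~2 machinery to get $a_0\cdots a_{n-1}(0)\to\eta$, and conclude $\eta$ is an accumulation point of $G(0)$. As you yourself flag, this runs into the fact that $\Sigma^+$, $E^n(\Sigma^+)$, Proposition~\ref{pointwise}, and Lemma~\ref{a_nconverge} are all stated for $f$-expansions of points \emph{already in} $\Lambda(G)$. Your proposed fix---that those proofs use only combinatorial/geometric features verifiable for any infinite $f$-expansion---is ultimately workable, but it is real additional labor, not a formality: you must check that Proposition~\ref{ser-lem} (shortness) and the no-attracting-point argument of Lemma~\ref{no-attract} go through for such $\eta$.

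The paper sidesteps this entirely by reversing the order of the last two steps. Having first established $\bigcap_{n\ge 0}f^{-n}(\varDelta)\subset\pi(X)$, it then shows $\pi(X)\subset\Lambda(G)$ via a periodic-point trick: transitivity of $f|_{\Lambda(G)}$ makes $\sigma$ transitive on $X$, so $\sigma$-periodic points are dense; for a periodic $x\in X$ of period $n$, the M\"obius transformation $\overline{x_0\cdots x_{n-1}}\in G\setminus\{1\}$ fixes $\pi(x)$, hence $\pi(x)\in\Lambda(G)$; continuity of $\pi$ and closedness of $\Lambda(G)$ finish it. This avoids any appeal to BS-cylinder decay or orbit convergence and thus never touches the circularity you identified. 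Your route is conceptually transparent (orbit accumulation is the very definition of a limit point), while the paper's buys a cleaner, self-contained argument at the cost of a small detour through the finite Markov shift.
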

\begin{proof}
If $G$ is of the first kind, then clearly all the three sets are equal to $\mathbb S^1$.
 Suppose $G$ is of the second kind. Then $\varDelta$ is the union of finitely many left-closed and right-open arcs.
Points in $\partial\varDelta\setminus\varDelta$ are 
improper vertices of $R$. Since
each improper vertex of $R$ is paired with 
another, it is easy to see that improper vertices of $R$
are not limit points, and in particular $\partial\varDelta\setminus\varDelta$ is not contained in $\Lambda$. Interior points of the complement of $\varDelta$ are not limit points, because no copy of $R$ can accumulate at such a point. We have verified that $\Lambda\subset \varDelta$. 

Since $\Lambda$ is $G$-invariant and $\Lambda\subset \varDelta$,
 we obtain $\Lambda\subset \bigcap_{n=0}^{\infty} f^{-n}(\varDelta)$.  To prove the equalities in the proposition, 
 we first show the next lemma.
 \begin{lemma}\label{included}
 We have
 $\bigcap_{n=0}^{\infty} f^{-n}(\varDelta)\subset \pi(X)$.
 \end{lemma}
 \begin{proof}
 Let $\xi\in \bigcap_{n=0}^{\infty} f^{-n}(\varDelta).$
 Define $x=(x_n)_{n=0}^\infty\in S^\mathbb N$ by $f^n(\xi)\in\varDelta(x_n)$. This is well defined since the elements $\varDelta(a)$, $a\in S$ of the Markov partition are pairwise disjoint. Since $f$ preserves orientation and the elements of the Markov partition
 are left-closed and right-open arcs, $x\in X$.
 Clearly we have $\xi\in\pi(x)$.
  \end{proof}

 To complete the proof of Proposition~\ref{BSlimitset},
 it remains to show $\pi(X)\subset \Lambda$. 
Since $f|_{\Lambda}$ is transitive by Proposition \ref{BS-Markov}, the periodic points of $\sigma$ are dense in $X$. 
Since  $\pi$ is continuous,
it suffices to show that 
for any $k\geq1$
and any fixed point $x=(x_n)_{n=0}^{\infty}\in X$ of $\sigma^k$,
$\pi(x)\in \Lambda$ holds.
Observe that the M\"obius transformation $\overline{x_0\cdots x_{k-1}}\in G$ 
satisfies $\overline{x_0\cdots x_{k-1}}(\pi(x))=\pi(x)$.
Since $x_0\cdots x_{k-1}$ is not the identity in $G$ by Proposition~\ref{ser-lem}, 
and since $\Lambda$ contains all fixed points of elements of $G\setminus \{1 \}$ in $\mathbb S^1$, we obtain
$\pi(x)\in\Lambda$.
\end{proof}

Let $Y$ be a topological space, $Y_0\subset Y$ and let
 $F\colon Y_0\to Y$ be a Borel map.
Let $\mathcal M(Y_0,F)$ denote
the set of Borel probability measures
on $\bigcap_{n=0}^\infty F^{-n}(Y_0)$ which are invariant under the restriction of $F$ to this set.
 For each $\mu\in\mathcal M(Y_0,F)$, let $h(\mu)$
 denote the measure-theoretic entropy of $\mu$ with respect to $F$.

  We will use the following correspondence of invariant measures
  on $X$ and $\Lambda$.
\begin{lemma}\label{meas-rep}
For any $\mu\in\mathcal M(\Lambda,f)$  
there exists  $\nu\in\mathcal M(X,\sigma)$ 
such that $\mu=\nu\circ\pi^{-1}$
and $h(\mu)=h(\nu)$. Conversely,
 for any $\nu\in\mathcal M(X,\sigma)$, the measure
$\mu=\nu\circ\pi^{-1}$ belongs to $\mathcal M(\Lambda,f)$ and satisfies
$h(\mu)=h(\nu)$. 
\end{lemma}
\begin{proof}
 The coding map $\pi$ is one-to-one except on 
 the preimage of the countable set
$B=\bigcup_{n=0}^{\infty} f^{-n}(\bigcup_{a\in S}\partial\varDelta(a))$ where it is at most two-to-one. Since $f$ preserves boundary points of the elements of the Markov partition, $f^{-1}(B)=B$ and so $\sigma^{-1}(\pi^{-1}(B))=\pi^{-1}(B).$

We have $f\circ \pi=\pi\circ\sigma$, and the restriction of $\pi$ to $X\setminus\pi^{-1}(B)$ has a continuous inverse.
 Hence, $\pi$ induces a measurable bijection between $X\setminus\pi^{-1}(B)$
and $\pi(X)\setminus B$. 
This and $\Lambda\subset\pi(X)$ in Proposition~\ref{BSlimitset} implies that 
 for any $\mu\in\mathcal M(\Lambda,f)$ with $\mu(B)=0$
 there exists $\nu\in\mathcal M(X,\sigma)$ 
such that $\mu=\nu\circ\pi^{-1}$ and $h(\mu)=h(\nu)$.

If $\mu\in\mathcal M(\Lambda,f)$ and $\mu(B)>0$, there exist $\rho\in(0,1]$ and $\mu_1,\mu_2\in\mathcal M(\Lambda,f)$ such that $\mu_1(B)=0$, $\mu_2(B)=1$ and 
 $\mu=(1-\rho)\mu_1+\rho\mu_2$.  
  Since $B$ is a countable set,
$\mu_2$ is
supported on a periodic orbit of $f$.
By Proposition~\ref{BSlimitset}, there exists
$\nu_2\in\mathcal M(X,\sigma)$ that is supported on a periodic orbit of $\sigma$ and satisfies $\mu_2=\nu_2\circ\pi^{-1}$. By the previous paragraph,
there exists 
$\nu_1\in\mathcal M(X,\sigma)$ with
$\mu_1=\nu_1\circ\pi^{-1}$. Set $\nu=(1-\rho)\nu_1+\rho\nu_2$. Then
$\mu=\nu\circ\pi^{-1}$ and $h(\mu)=(1-\rho)h(\mu_1)=(1-\rho)h(\nu_2)=h(\nu)$
as required in the first assertion of the lemma. A proof of the second one is analogous.
\end{proof}




 \subsection{Equality of pressure and generalized Poincar\'e exponent}\label{gen-p}
The piecewise analytic function
$\phi\colon\Lambda\to\mathbb R$ given by
\[\phi=-\log|f'|\]
plays an important role.
 For $\mu\in \mathcal M(\Lambda,f)$, define the {\it  Lyapunov exponent} of $\mu$ 
 by $\chi(\mu)=-\int\phi d\mu.$
  The {\it  geometric pressure function}, or simply the pressure is the function
 $\beta\in\mathbb R\mapsto P(\beta\phi,f)$ given by \[P(\beta\phi,f)=\sup\{h(\mu)-\beta\chi(\mu)\colon\mu\in\mathcal M(\Lambda,f)\}.\]
A measure in $\mathcal M(\Lambda,f)$ which 
attains this supremum is called 
an {\it  equilibrium state} for the potential $\beta\phi$. 
By the affinity of 
entropy and Lyapunov exponent on 
 $\mathcal M(\Lambda,f)$, 
the geometric pressure function is convex.
It is non-increasing since any measure in $\mathcal M(\Lambda,f)$ has a non-negative Lyapunov exponent as 
in Lemma~\ref{negative} below.

\begin{lemma}\label{alp+-}
We have
\[\alpha_+=\sup\{\chi(\mu)\colon\mu\in\mathcal M(\Lambda,f)\}\quad
\text{and}\quad\alpha_-=\inf\{\chi(\mu)\colon\mu\in\mathcal M(\Lambda,f)\}.\]
\end{lemma}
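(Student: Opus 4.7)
The plan is to translate both $\alpha^\pm$ and $\chi(\mu)$ into statements about Birkhoff averages of a single continuous potential on the compact Markov shift $X$, after which the standard variational identity closes the argument.

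\emph{Reformulation of $\alpha^\pm$ as extremes of Lyapunov exponents.} For any $\gamma \in \mathscr R$ with $\gamma^+ = \eta$, the double inequality \eqref{coin-lem-eq} combined with Proposition~\ref{MILD} gives $\lim_{n \to \infty} \tfrac{1}{n}|\log|(f^n)'\eta| - t_n(\gamma)| = 0$, so $\limsup_n \tfrac{1}{n} t_n(\gamma) = \bar\chi(\eta)$ and $\liminf_n \tfrac{1}{n} t_n(\gamma) = \underline\chi(\eta)$. Every $\eta \in \Lambda(G)$ arises as the positive endpoint of some $\gamma \in \mathscr R$ (extend the geodesic ray from $0$ toward $\eta$ backwards to $\mathbb S^1$), and by Lemma~\ref{cut-converge} the only points without an infinite cutting sequence are the countably many parabolic fixed points $\eta_0$, each with $\chi(\eta_0) = 0$, which coincides with $\alpha^-$ whenever any parabolic element exists (Lemma~\ref{alp-lem2}). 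Therefore
\[
\alpha^+ = \sup_{\eta \in \Lambda(G)} \bar\chi(\eta), \qquad \alpha^- = \inf_{\eta \in \Lambda(G)} \underline\chi(\eta).
\]

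\emph{Lifting to the shift.} Set $\psi := \log|f'| \circ \pi \colon X \to \mathbb R$. Since $\log|f'|$ extends continuously to the closure of each partition element $\varDelta(a)$, and since the coding $\pi$ sends the symbolic cylinder $[a]$ continuously into $\mathrm{cl}(\varDelta(a))$, the function $\psi$ is continuous and bounded on the compact symbolic space $X$. By Proposition~\ref{BSlimitset} we have $\pi(X) = \Lambda(G)$, so the previous paragraph rewrites as $\alpha^+ = \sup_{\omega \in X} \limsup_n \tfrac{1}{n} S_n \psi(\omega)$ and $\alpha^- = \inf_{\omega \in X} \liminf_n \tfrac{1}{n} S_n \psi(\omega)$, while Lemma~\ref{meas-rep} together with the change-of-variables $\int \psi\, d\nu = \int \log|f'|\, d(\nu \circ \pi^{-1})$ identifies
\[
\{\chi(\mu) : \mu \in \mathcal M(\Lambda(G), f)\} = \Bigl\{ \int \psi\, d\nu : \nu \in \mathcal M(X, \sigma) \Bigr\}.
\]

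\emph{Standard variational identity on a compact shift.} On $(X, \sigma)$ with the continuous potential $\psi$, the equalities $\sup_\omega \limsup_n \tfrac{1}{n} S_n \psi(\omega) = \sup_\nu \int \psi\, d\nu$ and $\inf_\omega \liminf_n \tfrac{1}{n} S_n \psi(\omega) = \inf_\nu \int \psi\, d\nu$ (suprema and infima over $\mathcal M(X, \sigma)$) hold by the Kryloff--Bogolyubov construction: along a subsequence $n_k$ realizing the left-hand extremum, the empirical measures $n_k^{-1} \sum_{j = 0}^{n_k - 1} \delta_{\sigma^j \omega}$ have a weak$^*$ limit $\nu \in \mathcal M(X, \sigma)$ whose integral $\int \psi\, d\nu$ equals that extremum (by continuity of $\psi$); conversely Birkhoff's theorem applied to the ergodic components of any $\nu$ realizes $\int \psi\, d\nu$ as a pointwise Birkhoff limit on a full-measure set. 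Combining with the previous paragraph yields both claims of the lemma.

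The main obstacle is the discontinuity of $\log|f'|$ on $\varDelta$ at the partition endpoints: a direct empirical-measures construction on $\varDelta$ could concentrate mass on such exceptional points and corrupt the limiting integrals. The essential maneuver is to perform the construction on $X$ instead, where the clopen cylinder structure promotes $\psi$ to a genuinely continuous function on a compact space and makes the classical argument apply without the auxiliary non-contraction hypothesis \eqref{NC}.
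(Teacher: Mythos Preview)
Your proof is correct and follows essentially the same route as the paper: translate $\alpha^\pm$ into pointwise Lyapunov extremes via \eqref{coin-lem-eq} and Proposition~\ref{MILD}, then pass to the compact shift $X$ where $\varphi$ is continuous and invoke a standard variational argument. The only difference is that the paper approximates $\alpha^+$ by periodic orbit measures (using the finite Markov structure and Proposition~\ref{MILD}), whereas you use the Kryloff--Bogolyubov construction on empirical measures; both are standard and interchangeable here. One minor point: your assertion that every parabolic fixed point $\eta_0$ satisfies $\chi(\eta_0)=0$ is correct but not entirely immediate for points other than the cusps themselves---it follows from the orbit equivalence of $f$ with the $G$-action (so that any parabolic fixed point is eventually mapped by $f$ into a neutral periodic orbit), and you might add a word to that effect.
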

\begin{proof}
 Using Proposition~\ref{MILD} and the irreducibility of the finite Markov shift $X$ in Proposition~\ref{BS-Markov}, we can construct a measure supported on periodic points whose Lyapunov exponent is arbitrarily close to $\alpha_+$. Hence, 
$\sup\{\overline{\chi}(\xi)\colon
\xi\in \Lambda_c\}\leq\sup\{\chi(\mu)\colon\mu\in\mathcal M(\Lambda,f)\}$ holds. The reverse inequality follows from Birkhoff's ergodic theorem. 
Combining this equality with 
$\alpha_+=\sup\{\overline{\chi}(\xi)\colon
\xi\in \Lambda_c\}$ which follows from
 Proposition~\ref{coincide-lem}, we obtain the first equality in the lemma.
 A proof of the second one is analogous.
\end{proof}

\begin{lemma}\label{negative}
For any $\mu\in\mathcal M(\Lambda,f)$ we have $\chi(\mu)\geq0$.
\end{lemma}
\begin{proof}
From Lemma~\ref{alp+-} and $\alpha_-\geq0.$
\end{proof}

Although $\phi$ may have discontinuities, 
the function $\varphi\colon X\to\mathbb R$ given by
\begin{equation}\label{varphi}\varphi=\phi\circ\pi\end{equation}
is continuous. 
For $\beta\in\mathbb R$ the topological pressure of  the potential 
$\beta\varphi\colon X\to\mathbb R$ with respect to $\sigma$ is given by 
\[P(\beta\varphi,\sigma)= \lim_{n\to\infty}\frac{1}{n}\log\sum_{\omega\in E^n(X) }\sup_{[\omega]}\exp \left(\beta\sum_{k=0}^{n-1}\varphi\circ\sigma^k\right).\]
Since $\varphi$ is continuous, 
the variational principle holds:
\[P(\beta\varphi,\sigma)=\sup\left\{h(\nu)+\beta\int\varphi d\nu\colon\nu\in\mathcal M(X,\sigma)\right\}.\]
Since
$\sigma$ is expansive and $X$ is a subshift over the finite set $S$,
the entropy map is upper semicontinuous on  $\mathcal M(X,\sigma)$.
Since $\varphi$ is continuous and
$\mathcal M(X,\sigma)$ is compact with respect to the weak* topology,
this supremum is attained. By Lemma~\ref{meas-rep},  there is an equilibrium state for the potential $\beta\phi$.

\begin{prop}\label{p-equal}
For all $\beta\in\mathbb R$ we have
\[P(\beta)=P(\beta\varphi,\sigma)=P(\beta\phi,f).\] 
\end{prop}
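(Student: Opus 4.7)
The plan is to split the statement into the two equalities $P_X(\beta\varphi)=P_f(\beta)$ and $P_G(\beta)=P_X(\beta\varphi)$, which I would handle separately. For the first, I would apply Lemma~\ref{meas-rep} together with the variational principles. Since $\pi\circ\sigma=f\circ\pi$ and $\varphi=\phi\circ\pi$, the change-of-variables formula gives $\int\varphi\,d\nu=\int\phi\,d\mu=-\chi(\mu)$ whenever $\mu=\nu\circ\pi^{-1}$; combined with the entropy preservation $h(\mu)=h(\nu)$ from Lemma~\ref{meas-rep}, the two sets $\{(h(\nu),\int\varphi\,d\nu):\nu\in\mathcal{M}(X,\sigma)\}$ and $\{(h(\mu),-\chi(\mu)):\mu\in\mathcal{M}(\Lambda(G),f)\}$ coincide, so the variational principle for $P_X(\beta\varphi)$ matches the supremum defining $P_f(\beta)$.

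For $P_G(\beta)=P_X(\beta\varphi)$, I would identify both sides with the exponential growth rate of the counting function $A_n(\beta):=\sum_{g\in G,\,|g|=n}\exp(-\beta d(0,g(0)))$. Grouping the defining series of $P_G(\beta)$ by word length and applying the Cauchy--Hadamard formula immediately yields $P_G(\beta)=\limsup_{n\to\infty}(1/n)\log A_n(\beta)$. For the partition function of $X$, let $\omega=\omega_0\cdots\omega_{n-1}\in E^n(X)$ and let $g_\omega\in G$ be the group element associated with the unique $a_0\cdots a_{n-1}\in E^n(\Sigma^+)$ such that $\varDelta(\omega_0\cdots\omega_{n-1})\subset\Theta(a_0\cdots a_{n-1})$. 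The mean value theorem together with \eqref{lower-bound} gives $|(f^n)'(x)|^{-1}\asymp|\Theta(\omega)|$ for $x\in[\omega]$ up to a factor in $[D_n^{-1},D_n]$, while Proposition~\ref{z-series} gives $|\Theta(\omega)|\asymp\exp(-d(0,g_\omega(0)))$ up to a similar factor; combining,
\[\sup_{x\in[\omega]}\exp\Bigl(\beta\sum_{k=0}^{n-1}\varphi(\sigma^k x)\Bigr)=D_n^{O(|\beta|)}\exp(-\beta d(0,g_\omega(0))).\]

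Summing over $\omega\in E^n(X)$ and using the bounded multiplicity ($\leq 2\max_{v}k(v)-1$) of the map $E^n(X)\to E^n(\Sigma^+)$ recalled in Section~\ref{markov-sec}, together with Proposition~\ref{ser-lem} and the even-corner hypothesis (which imply that $E^n(\Sigma^+)\to\{g\in G:|g|=n\}$ is surjective and boundedly many-to-one, since every shortest word can be transformed to avoid anticlockwise half cycles by swapping each such cycle with the complementary clockwise half cycle of equal length $n(v)$), one obtains $\sum_{\omega\in E^n(X)}\sup_{[\omega]}\exp(\beta\sum_{k=0}^{n-1}\varphi\circ\sigma^k)\asymp D_n^{O(|\beta|)}A_n(\beta)$. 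Since $\log D_n=o(n)$ by Proposition~\ref{MILD}, taking $(1/n)\log$ and letting $n\to\infty$ yields $P_X(\beta\varphi)=P_G(\beta)$. The main obstacle will be establishing the boundedly many-to-one property of $E^n(\Sigma^+)\to\{g\in G:|g|=n\}$: verifying that the admissible shortest representations of each $g$ have cardinality bounded uniformly in $g$ and $n$ requires a combinatorial argument controlling how half-cycle swaps at distinct vertices of $R$ can interact.
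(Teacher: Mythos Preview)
Your approach is essentially the same as the paper's: the equality $P_X(\beta\varphi)=P_f(\beta)$ via Lemma~\ref{meas-rep} and the variational principles, and the equality $P_G(\beta)=P_X(\beta\varphi)$ by comparing the partition function over $E^n(X)$ with the sum $\sum_{|g|=n}\exp(-\beta d(0,g(0)))$ using Proposition~\ref{z-series}, the bounded multiplicity of $E^n(X)\to E^n(\Sigma^+)$, and the sub-exponential distortion bound of Proposition~\ref{MILD}.

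The one point worth flagging is your ``main obstacle''. You anticipate needing a uniform bound on the number of admissible shortest representations of a given $g$, and worry about half-cycle swaps at different vertices interacting. In fact the paper proves something stronger (Lemma~\ref{represent-u}): the admissible shortest representation of each $g\in G\setminus\{1\}$ is \emph{unique}, so the map $E^n(\Sigma^+)\to\{g\in G:|g|=n\}$ is a bijection. The argument for uniqueness is short: if $g=e_{j_1}\cdots e_{j_{|g|}}=e_{k_1}\cdots e_{k_{|g|}}$ are two admissible representations with $e_{j_1}\neq e_{k_1}$, then the relation $\bar e_{k_{|g|}}\cdots\bar e_{k_1}e_{j_1}\cdots e_{j_{|g|}}=1$ must contain a full cycle through $\bar e_{k_1}e_{j_1}$ (since vertex cycles give a complete set of relations and both words are shortest), forcing one of the two representations to contain an anticlockwise half cycle and contradicting admissibility. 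So your anticipated combinatorial analysis is unnecessary; no interaction between swaps arises.
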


Let $g\in G$. 
A {\it  shortest representation} of $g$ is a representation of $g$ that contains exactly $|g|$ generators in $G_R$. 
A shortest representation of $g$ is {\it  admissible} if it is contained in $E(\Sigma^+)$.
\begin{lemma}\label{represent-u}
Every $g\in G\setminus\{1\}$ has a unique admissible 
shortest representation.
\end{lemma}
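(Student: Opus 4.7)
The plan is to establish the two claims separately, using Proposition~\ref{ser-lem} to translate admissibility into being shortest with no anticlockwise half cycle.

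For existence, I would start from any shortest representation $w=e_{i_1}\cdots e_{i_n}$ of $g$ in $G_R$ (which exists by the very definition of $|g|$). If $w$ contains an anticlockwise half cycle around some vertex $v$ of $N$ in $\mathbb D$, I would invoke the defining vertex relation $e_{j_1}\cdots e_{j_{2n(v)}}=1$, read off from the cutting sequence of a small anticlockwise loop about $v$ (of even length by the even-corner hypothesis). This relation allows one to substitute the offending half cycle $e_{j_1}\cdots e_{j_{n(v)}}$ by the length-preserving clockwise complement $\bar e_{j_{2n(v)}}\cdots\bar e_{j_{n(v)+1}}$. The substitution preserves both the word length and the group element. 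Iterating until no anticlockwise half cycle remains gives the required representative; termination is ensured by attaching to each candidate word a well-founded measure, for instance the lexicographic position of its leftmost anticlockwise half cycle, which strictly decreases at each substitution.

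For uniqueness, let $w_1,w_2\in E^n(\Sigma^+)$ both represent $g$. The key fact, essentially Dehn's algorithm for the even-corner presentation of $G$, is that any two shortest words representing the same element of $G$ can be connected by a finite chain of length-preserving rewrites, each of which is a half-cycle flip at a single vertex of $N$. Any such flip either replaces a clockwise half cycle by an anticlockwise one or vice versa, so every nontrivial flip produces at least one word in the chain containing an anticlockwise half cycle. Since neither $w_1$ nor $w_2$ contains an anticlockwise half cycle, and admissibility is only preserved when the chain is trivial, we conclude $w_1=w_2$.

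The main obstacle is giving a self-contained justification of the Dehn-style rewriting statement underlying uniqueness; the clean presentation requires a careful small-cancellation argument using the structure of the vertex relators. An alternative that bypasses the combinatorial group theory is a geometric construction: given $g$, one can read off a canonical admissible word representing $g$ directly from the tessellation by tracing the path of copies $R,a_0R,a_0a_1R,\ldots,a_0\cdots a_{n-1}R=gR$ under the rule encoded in the definition of the Bowen--Series partition (essentially, always choose the clockwise-most available transition at each vertex of $N$). This rule singles out one shortest path in the adjacency graph of copies, simultaneously proving existence and uniqueness.
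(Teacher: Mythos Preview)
Your existence argument is essentially the same as the paper's: start from any shortest word and replace anticlockwise half cycles by their clockwise complements until none remain.

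For uniqueness the paper takes a more direct route than yours. Instead of invoking a Dehn-style chain of half-cycle flips, it argues letter by letter: if two admissible shortest representations $e_{j_1}\cdots e_{j_n}$ and $e_{k_1}\cdots e_{k_n}$ of $g$ satisfy $e_{j_1}\neq e_{k_1}$, then the relator word $\bar e_{k_n}\cdots\bar e_{k_1}e_{j_1}\cdots e_{j_n}=1$ must (because the vertex cycles form a complete set of relations for $G$ and both words are shortest) contain a full $2n(v)$-cycle straddling the junction $\bar e_{k_1}e_{j_1}$. From the orientation of this cycle one sees that either $w_1$ or $w_2$ begins with an anticlockwise half cycle, contradicting admissibility. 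Hence $e_{j_1}=e_{k_1}$, and one iterates. This sidesteps the rewriting machinery you rightly flag as the main obstacle.

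Your uniqueness argument also has a gap beyond the one you acknowledge. Even granting that any two shortest words are connected by a chain of half-cycle flips, the claim that ``admissibility is only preserved when the chain is trivial'' needs work: after the first flip creates an anticlockwise half cycle, subsequent flips at overlapping vertices could in principle remove it while landing on an admissible word different from the starting one. You would need a monovariant on admissible words to rule this out, and at that point the paper's direct argument is shorter. Your geometric alternative via the Bowen--Series coding is viable but essentially re-derives Proposition~\ref{ser-lem}.
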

\begin{proof}
 Let $g=e_{i_1}\cdots e_{i_{|g|}}$ be a shortest representation of $g$. We replace all anticlockwise half cycles in this representation by the corresponding clockwise half cycles, and obtain (possibly) another shortest representation  $g=e_{j_1}\cdots e_{j_{|g|}}$ that contains no anticlockwise half cycle. By Proposition~\ref{ser-lem}, $e_{j_1}\cdots e_{j_{|g|}}\in E(\Sigma^+)$ holds.

Let $g=e_{j_1}\cdots e_{j_{|g|}}$, $g=e_{k_1}\cdots e_{k_{|g|}}$ be two admissible shortest representations of $g$. Suppose $e_{j_1}\neq e_{k_1}$.
Then we have a relation $\bar e_{k_{|g|}}\cdots \bar e_{k_1}e_{j_1}\cdots e_{j_{|g|}}=1$. Since the vertex cycles  give a complete set of relations of $G$ and both representations of $g$ are shortest,
$\bar e_{k_{|g|}}\cdots \bar e_{k_1}e_{j_1}\cdots e_{j_{|g|}}$ contains a cycle that contains 
$\bar e_{k_1}e_{j_1}$. It follows that one of the two representations of $g$ contains an anticlockwise half cycle and this yields a contradiction since both representations are admissible.
Hence, we obtain $e_{j_1}=e_{k_1}$. Repeating this argument we obtain $e_{j_i}\neq e_{k_i}$ for $1\leq i\leq |g|$.
\end{proof}

\begin{proof}[Proof of Proposition~\ref{p-equal}]
For $n\geq1$ and $a_0\cdots a_{n-1}\in E^n(\Sigma^+)$,
let $E^n(X,a_0\cdots a_{n-1})$ denote the set of elements of $\omega$ in $E^{n}(X)$ such that $\varDelta(\omega)\subset \Theta(a_0\cdots a_{n-1})$. Clearly, 
\begin{equation}\label{multiplicity}1\leq\#E^n(X,a_0\cdots a_{n-1})\leq 2n(R).\end{equation}
 By \eqref{lower-bound}, for $x\in \Theta(a_0\cdots a_{n-1})$,
\begin{equation}\label{z-distortion}\theta_0D_n^{-1}\leq\frac{|\Theta(a_0\cdots a_{n-1})|}{|(f^{n})'x|^{-1}}\leq 2\pi D_n.\end{equation}
By Proposition~\ref{z-series}, \eqref{multiplicity} and \eqref{z-distortion}, 
there exists a constant $C\geq1$ such that for $\beta,t\in\mathbb R$ we have
\begin{equation}\label{multi-double}
\begin{split}
C^{-\beta}D_n^{-2\beta}(2\pi)^{-\beta}&\leq\frac{\sum_{\omega\in E^n(X,a_0\cdots a_{n-1})}\sup_{[\omega]}\exp(\beta\sum_{k=0}^{n-1}\varphi\circ\sigma^k)e^{-tn}}{\exp(-\beta d(0,a_0\cdots a_{n-1}0))e^{-tn}}\\
&\leq 
  2n(R) \cdot C^\beta D_n^{2\beta}\theta_0^{-\beta}.\end{split}\end{equation}
By Lemma~\ref{represent-u} and 
Proposition~\ref{ser-lem},
there is a one-to-one correspondence between $G\setminus\{1\}$ and $E(\Sigma^+)$.
Therefore,
rearranging the double inequalities \eqref{multi-double} and summing the result over all words in $ E^{n}(\Sigma^+)$, and then summing the result over all $n\geq1$, and then using \cite[Theorem~2.1.3]{MauUrb03} we obtain 
\[P(\beta\varphi,\sigma)\leq\inf\left\{t\in\mathbb R\colon\sum_{n=1}^\infty D_n^{2\beta}\sum_{g\in G,|g|=n}\exp(-\beta d(0,g0)-t|g|)<+\infty\right\},\]
 and $P(\beta\varphi,\sigma)\leq P(\beta)$. A similar reasoning shows the reverse inequality.
Lemma~\ref{meas-rep} implies
$P(\beta\varphi,\sigma)=P(\beta\phi,f)$. This completes the proof of Proposition~\ref{p-equal}.
\end{proof}

\section{Building induced expansion}

The aim of this section is to construct from the Bowen-Series map $f$
a uniformly expanding induced Markov map $\tilde f$. We construct the induced Markov map $\tilde f$ in Section~\ref{c-ind} as
a first return map to a large subset of $\varDelta$ which misses small neighborhoods of the cusps. 
Although this construction is essentially the same as in \cite{BowSer79}, in order to build a uniform expansion 
without assuming the non-contracting condition \eqref{NC}
we use a linear growth lemma on induced scale
(Lemma~\ref{g-linear}) that relies on a geometric ingredient
developed in Section~\ref{control}.
Finally in Section~\ref{exp-section} we verify the uniform expansion of $\tilde f$.

 \subsection{Construction of induced Markov map}\label{c-ind}
Let $f$ be the Bowen-Series map with the finite Markov partition $(\varDelta(a))_{a\in S}$ constructed in Section~\ref{markov-sec}.
Note that $\varDelta(a)\cap\Lambda\neq\emptyset$ for $a\in S$.
Define
 \[\tilde\varDelta=\varDelta\setminus\left( V_c\cup \bigcup_{v\in V_c}L(v)\cup R(v)\right).\]
 Note that $\tilde\varDelta$ is a non-empty set.
Define  $t\colon \tilde\varDelta\to\mathbb N$
by 
\begin{equation}\label{t-def}t(\xi)=\inf\{n\geq1\colon f^{n}(\xi)\in \tilde\varDelta
\}\end{equation}
Define the induced map
\begin{equation}\label{inducedmap}\tilde{f}: \tilde\varDelta \to \mathbb{S}^1,\quad \xi\mapsto f^{t(\xi)}(\xi),\end{equation}
and set
\begin{equation}\label{induce-eq10}\tilde\Lambda=\bigcap_{n=0}^{\infty} \tilde f^{-n} (\tilde\varDelta).\end{equation}
Replacing each $\varDelta(a)$, $a\in S$,  by the countably many cylinders on which $t$ is finite and constant, we obtain a countably infinite subset $\tilde S$ of $E(X)$ such that $\tilde \varDelta=\bigcup_{\tilde a\in \tilde S}\tilde \varDelta(\tilde a)$ and a Markov map
$\tilde{f}$ with a Markov partition 
$(\tilde \varDelta(\tilde a))_{\tilde a\in \tilde S}$. This determines by \eqref{m-shift} a countable Markov shift
\[\tilde X=\tilde X(\tilde f,(\tilde \varDelta(\tilde a))_{\tilde a\in \tilde S}).\]Note that each cylinder $\tilde \varDelta(\tilde a)$ has the form
$\tilde \varDelta(\tilde a)=\varDelta(a_1)\cap\{t=n\}\cap f^{-n}(\varDelta(a_2))$
for some $n\geq1$ and $a_1,a_2\in S$.

\subsection{Control of deviations of cutting orbits}\label{control}
Let $\gamma\in\mathscr{R}$  with the infinite  cutting sequence
$(g_n)_{n=0}^\infty$. 
 If $G$ has a parabolic element, $R$ has a cusp and the cutting orbit  
 $(g_0\cdots g_n0)_{n=0}^\infty$ may deviate from $\gamma$. 
 In this subsection we elaborate on uniform bounds on this deviation using $\tilde{f}$. 



 For $\xi\in\mathbb D$
and $A\subset \mathbb D$ we denote $d(\xi,A)=\inf \{d(\xi,\eta)\colon \eta\in A\}$.

 \begin{lemma}
\label{gamma-cor}
There exist $C_0>0$ and an integer $M_0\geq1$ such that if $\gamma\in\mathscr{R}$ satisfies $\gamma^+\in \Lambda_c$ 
 and $f^k(\gamma^+)\in \tilde\varDelta$ for some  $k\geq M_0$, then 
there exists  $n_*\in\{k-M_0,\ldots, k+M_0\}$
such that
   \[d(g_0\cdots g_{n_*}0,\gamma\cap g_0\cdots g_{n_*}R)\leq C_0,\]
where $(g_n)_{n=0}^\infty$ denotes the cutting sequence of $\gamma$.
\end{lemma}

\begin{proof}
Let  $C_0>0$ be so large that the  hyperbolic disc around $0$ of radius $C_0$ covers all of the intersection of  $R$ and  the Nielsen region  of $G$ \cite[Section~8.5]{Bea83}  except small neighborhoods of the cusps. Let  $M_0\ge 1$ be a large number to be determined later.

Suppose for a contradiction the assertion of the lemma fails. Then there exists    $\gamma \in \mathscr{R}$ with infinite cutting sequence  
 $(g_n)_{n=0}^\infty$  (see Lemma~\ref{cut-converge})
and there exists  $k\ge M_0$ such that $f^k(\gamma^+)\in \tilde\varDelta$  and,  for every $n\in\{k-M_0,\ldots, k+M_0\}$,
 \[d(g_0\cdots g_n0,\gamma\cap g_0\cdots g_nR)> C_0.\]  
This means that $\gamma$ performs a deep cusp excursion between the $(k-M_0)$th and the  $(k-M_0)$th crossing of fundamental domains and therefore, the cutting symbols 
 $g_{k-M_0},\ldots, g_{k+M_0}$ of $\gamma$  are given by the periodic sequence of labels of sides ending at one of the a cusps of $R$, say $v_0\in V_c$. We conclude by 
Lemmas~\ref{non-back} and \ref{lem-parallel} that  the partial BS orbit $(a_{0}\cdots a_{n}0)_{n\colon |n-k|\leq M_0-2}$
appears in the same order in the partial cutting orbit 
 $(g_0\cdots g_n0)_{n\colon |n-k|\leq M_0}$.  This implies that the first $M_0 -2$ symbols of the $f$-expansion of $f^k(\gamma^+)$ are given by the periodic sequence of sides ending at the cusp $v_0$. If $M_0$ is large enough depending on the prime periods of the cusps, this  implies $f^k(\gamma^+)\in L(v_0)\cup R(v_0)$ contradicting $f^k(\gamma^+) \in \tilde \varDelta$.
\end{proof}



\begin{prop}\label{prop:bdd-deviation}
There exists  $C>0$ such
that for all $n\ge1$  sufficiently large and $\tilde\omega_0\cdots \tilde\omega_{n-1}\in E^{n}(\tilde X)$,  and for all  $\gamma \in \mathscr{R}$ with $\gamma^{+}\in \tilde \varDelta( \tilde{\omega}_{0}\cdots\tilde{\omega}_{n-1})\cap \Lambda_c$, 
\[
d(\tilde{\omega}_{0}\cdots\tilde{\omega}_{n-1}0,\gamma)\le C.
\]
\end{prop}

\begin{proof} Let $C_0$ and $M_{0}$ denote the constants in Lemma~\ref{gamma-cor}.
 Let $n> M_0$. There exists  $k\geq n$  and $a_1\cdots a_k\in E^k(\Sigma^+)$ such
that $\tilde{\omega}_{0}\cdots\tilde{\omega}_{n-1} = a_1 \cdots a_k $ and $f^{k}(\gamma^{+})=(\tilde{f})^{n}(\gamma^{+})\in\tilde{\varDelta}$.
 By Lemma~\ref{gamma-cor} there exists $n_{*}\in\{k-M_{0},\ldots,k+M_{0}\}$ such that
 $d(g_{0}\cdots g_{n_{*}}0,\gamma\cap g_{0}\cdots g_{n_{*}}R)\leq C_0$. By the triangle inequality we have
\[
d(a_1\dots a_k0,\gamma) \le  2M_{0}\max_{g\in G_{R}}d(0,g0) +  d(a_{1}\cdots a_{n_{*}}0,g_0\cdots g_{n_{*}}0) + C_0.
\]
 Since the second term of the right-hand side does not exceed  $n(R)\max_{g\in G_R}\{d(0,g0)\}$ 
 by Lemma~\ref{lem-parallel},  the proposition follows.
\end{proof}

\subsection{Uniform expansion of the induced map}
\label{exp-section}

If the Fuchsian group $G$ has no parabolic element, that is, $G$ is convex cocompact, then the next lemma follows from  the Milnor-Swarc Lemma.
\begin{lemma}[Linear growth on induced scale]\label{g-linear}
There exists $\alpha_0>0$ such that for all sufficiently large $n\geq1$ and  every  $\tilde\omega_0\cdots \tilde\omega_{n-1}\in E^{n}(\tilde X)$ we have
\[d(0,\tilde\omega_0\cdots \tilde\omega_{n-1}0)\geq \alpha_0n.\]
\end{lemma}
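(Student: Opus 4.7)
The strategy is to associate to the induced word $\tilde\omega_0\cdots\tilde\omega_{n-1}$ a complete geodesic $\gamma$, confine all the BS-orbit points $g_k(0):=\tilde\omega_0\cdots\tilde\omega_{k-1}(0)$ for $0\le k\le n$ to a uniform hyperbolic tubular neighbourhood of $\gamma$ via Proposition~\ref{gamma-cor}, and then convert this geometric confinement into a linear displacement lower bound by combining the proper discontinuity of the $G$-action with the linear area growth of tubes in $\mathbb D$. Concretely, I concatenate the $\tilde\omega_j$ into a word $a_0\cdots a_{T_n-1}\in E^{T_n}(X)$ where $T_k:=\sum_{j<k}|\tilde\omega_j|$, extend to an infinite admissible sequence whose $\pi$-image lies in $\Lambda(G)$ (possible by topological transitivity of $f|_{\Lambda(G)}$ together with Proposition~\ref{BSlimitset}), and call the resulting limit point $\eta$. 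Pick any $\gamma\in\mathscr{R}$ with $\gamma^+=\eta$ and write $(e_{i_n})_{n=0}^\infty$ for its cutting sequence.

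\textbf{Confining the orbit near $\gamma$.} Since $\mathrm{cl}(\tilde\varDelta)\cap V_c=\emptyset$ by construction of $\tilde\varDelta$, the set $K:=\mathrm{cl}(\tilde\varDelta)\cap\Lambda(G)$ satisfies the hypothesis of Proposition~\ref{gamma-cor}. For every $k\in\{1,\ldots,n\}$ one has $f^{T_k}(\eta)\in\tilde\varDelta\subset K$, so the proposition produces $n_*(k)\in\{T_k-2,\ldots,T_k+M_0\}$ with $d(e_{i_0}\cdots e_{i_{n_*(k)}}(0),\gamma)\ll 1$. Combining Lemma~\ref{lem-parallel} (which, together with the finiteness of $G_R$, bounds $d(e_{i_0}\cdots e_{i_{n}}(0),a_0\cdots a_{n}(0))$ by a universal constant) with $|n_*(k)-T_k|\le M_0+2$ and $\max_{e\in G_R}d(0,e(0))<\infty$, I conclude that every $g_k(0)=a_0\cdots a_{T_k-1}(0)$ lies within some universal hyperbolic constant $C_1>0$ of $\gamma$, independent of $\gamma$, $k$ and $n$.

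\textbf{Volume counting.} Since no elliptic element of $G$ fixes $0$, the quantity $r_0:=\tfrac12\inf_{g\in G\setminus\{1\}}d(0,g(0))$ is positive and the hyperbolic balls $\{B(g(0),r_0):g\in G\}$ are pairwise disjoint. The hyperbolic area of the $C_1$-tube around a geodesic segment of length $L$ in $\mathbb D$ equals $2L\sinh C_1$, so its intersection with $B(0,R)$ contains at most $c_1R+c_2$ points of $G\cdot 0$, with constants depending only on $r_0$ and $C_1$. By Lemma~\ref{represent-u} the $n+1$ points $g_0(0),\ldots,g_n(0)$ are pairwise distinct, so setting $R^*:=\max_{0\le k\le n}d(0,g_k(0))$ forces $R^*\ge (n+1-c_2)/c_1$.

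\textbf{Main obstacle.} The remaining step, and the hard one, is to upgrade the bound on $R^*$ to the same bound on $d(0,g_n(0))$ itself. For this I would appeal to Lemma~\ref{non-back}: for every $j\le n$ the geodesic $L_j:=a_0\cdots a_{T_j-1}(C(\bar a_{T_j}))$ has $g_n(0)$ on its $C^-$-side and $g_j(0)$ on its $C^+$-side at distance at most $\mathrm{diam}(R)$. Because $\tilde\omega_0\cdots\tilde\omega_{n-1}$ is a shortest admissible word, the edge path from $R$ to $a_0\cdots a_{T_n-1}(R)$ crosses $L_{k^*}$ (where $k^*$ realises $R^*$) exactly once, placing $0\in R$ and $g_n(0)$ on opposite sides of $L_{k^*}$ as soon as $R^*>2\,\mathrm{diam}(R)$, which holds for $n$ large. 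This yields $d(0,g_n(0))\ge d(0,L_{k^*})\ge R^*-\mathrm{diam}(R)\ge\alpha_0 n$ for a suitable $\alpha_0>0$ and $n$ large enough. The technical core of the lemma thus lies in making this single-crossing argument precise, using the shortness of the admissible word together with the even-corner geometry of $R$.
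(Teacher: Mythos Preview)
Your overall strategy---confine the BS-orbit points $g_k(0)$ to a uniform tube around a geodesic via Proposition~\ref{gamma-cor}, then use proper discontinuity to force linear spread---is exactly the paper's approach. The volume-counting formulation is a harmless repackaging of the paper's segment-counting argument.

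There are, however, two problems in your ``Main obstacle'' paragraph. First, $\mathrm{diam}(R)=\infty$ whenever $G$ has a parabolic element, which is precisely the case of interest; the bound you actually need is $\max_{e\in G_R}d(0,C(\bar e))$, which is finite. Second, and more seriously, the single-crossing claim is not justified: you assert that the edge path from $R$ to $a_0\cdots a_{T_n-1}(R)$ crosses the full geodesic $L_{k^*}$ exactly once, hence that $0$ lies on the $C^+$-side of $L_{k^*}$. Lemma~\ref{non-back} indeed pins $g_n(0)$ to the $C^-$-side, and it pins $g_{k^*}(0)$ to the $C^+$-side, but it says nothing about where $0\in R$ sits relative to the translated geodesic $L_{k^*}=a_0\cdots a_{T_{k^*}-1}(C(\bar a_{T_{k^*}}))$. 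In a non-free group the full geodesic $L_{k^*}$ contains many sides of $N$ beyond the one adjacent to $a_0\cdots a_{T_{k^*}-1}(R)$, and the edge path can in principle cross $L_{k^*}$ at earlier steps; shortness alone does not rule this out. So the inequality $d(0,g_n(0))\ge d(0,L_{k^*})$ is not available.

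The paper circumvents this difficulty in a different way. It takes $\gamma$ to be a geodesic ray \emph{from} $0$ (so that $0\in\gamma$ automatically), and exploits the fact that the cutting orbit points $e_{i_0}\cdots e_{i_{j(k)}}(0)$ are not merely close to $\gamma$ but close to the specific arcs $\gamma\cap e_{i_0}\cdots e_{i_{j(k)}}(R)$, which are ordered monotonically along $\gamma$ by the very definition of cutting sequence. Since the largest index $\hat j(q)$ differs from $T_n$ by at most $M_0+2$, the point $g_n(0)$ is within a uniform constant of the furthest such arc. Hence \emph{all} the intermediate points project onto the initial portion of $\gamma$ up to (roughly) the projection of $g_n(0)$, and by thin triangles they lie within a uniform constant of the segment $[0,g_n(0)]$ itself. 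This monotonicity of crossings replaces your half-space separation argument and delivers $d(0,g_n(0))\gtrsim n$ directly.
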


\begin{proof}Let $C_0$ and $M_{0}$ denote the constants in 
Lemma~\ref{gamma-cor}. Let $n> M_0$  and  $\tilde\omega_0\cdots \tilde\omega_{n-1}\in E^{n}(\tilde X)$.
Let $\gamma \in \mathscr{R}$ such that  $\gamma^+ \in \Lambda_c\cap\tilde \varDelta(\tilde\omega_0\cdots\tilde\omega_{n-1})$ with the cutting sequence
$(g_j)_{j=0}^{\infty}$.
By Lemma~\ref{gamma-cor}, for  each $k\in\{M_0,\ldots,n-1\}$ we fix an integer  $j(k)$ such that $|j(k)-|\tilde\omega_0\cdots\tilde\omega_k||\leq M_0$ and 
\begin{equation}\label{linear-eq0}
d(g_0 \cdots g_{j(k)}0,\gamma\cap g_0 \cdots g_{j(k)}R)\leq C_0. \end{equation} 
We write all the distinct elements of the sequence
$j(M_0),\ldots,j(n-1)$ as  $j_1,j_2,\ldots,j_q$  in the increasing order with some   $q\ge (n-1-M_0)/(2M_0)$. For each $1\le k \le q$ there exists $p_k\in \gamma\cap g_{0}\cdots g_{j_k}R$ such that $d(g_{0}\cdots g_{j_k}0,p_k)\le C_0.$
Using \eqref{linear-eq0} and 
Lemma~\ref{lem-parallel} we derive the existence of a uniform constant $C'>0$ such that 
\[
d(0,\tilde\omega_0\cdots \tilde\omega_{n-1}0) \ge d(p_1,p_q) -C'.
\] 
Divide the geodesic segment from $p_1$ to $p_q$ into segments of  hyperbolic length $C_0$, with one shorter segment, say $\gamma_1,\ldots, \gamma_N$, for some $N\ge 1$.
By \eqref{linear-eq0}, for each  $1\leq \ell\leq q$, the orbit point 
$g_{0}\cdots g_{j_\ell}0$ is within the hyperbolic distance $C_0$ of  one of the geodesic segments $\gamma_1,\dots, \gamma_N$.

Partition the set $\{g_{0}\cdots g_{j_\ell}0\colon 1\leq \ell\leq q\}$ into subsets $O_1,\dots, O_N$ so that $d(O_k, \gamma_k) \le C_0$ for $1\le k\le N$.  
Since $G$ acts properly discontinuously on $\mathbb D$,
there exists an integer $M\geq1$ such that  $\# O_k\le M$ for $1\le k\le N$.
Hence, 
$N\ge q/M$.  Combining this with \eqref{linear-eq0} yields
\[d(0,\tilde\omega_0\cdots \tilde\omega_{n-1}0)\geq C_0\left(\frac{q}{M}-1\right)-C'\geq
C_0\left(\frac{n-1-M_0}{2M_0M}-1\right)-C'.\]
Hence, the lemma follows for  $\alpha_0=C_0/(3M_0M)$ and  sufficiently large $n$.
\end{proof}

\begin{prop} \label{z-series-induce}
There exists $\alpha_{0}>0$ such that for $n\ge1$  sufficiently large, 
\[
\inf_{\tilde{\omega}_{0}\cdots\tilde{\omega}_{n-1}\in E^{n}(\tilde{X})}\inf_{\xi\in{\tilde\varDelta}(\tilde{\omega}_{0}\dots\tilde{\omega}_{n-1})\cap\Lambda_c}|(\tilde{f}^{n})'\xi|\gg e^{\alpha_{0}n}
\]
and
\[
{\rm diam}(\tilde\varDelta(\tilde{\omega}_{0}\dots\tilde{\omega}_{n-1})\cap{\Lambda_c})\ll e^{-\alpha_{0}n}.
\]
\end{prop}
\begin{proof}
Let $C>0$ denote the constant in Proposition~\ref{prop:bdd-deviation}. Let  $n\geq1$ and  $\tilde\omega_0\cdots \tilde\omega_{n-1}\in E^{n}(\tilde X)$ and let  $\xi\in\tilde\varDelta(\tilde{\omega}_{0}\dots\tilde{\omega}_{n-1})\cap \Lambda_c$.
Let $\gamma\in\mathscr{R}$ be the ray through zero with $\gamma^{+}=\xi$.
By Proposition~\ref{prop:bdd-deviation} we have  for $n$ sufficiently large, 
\begin{equation}
d(\tilde{\omega}_{0}\dots\tilde{\omega}_{n-1}0,\gamma)\le C.\label{eq:bdd-dist}
\end{equation}
Since $\tilde{f}^{n}\left(\gamma^{+}\right)=\left(\tilde{\omega}_{0}\dots\tilde{\omega}_{n-1}\right)^{-1}\gamma^{+}$,
it follows from the well-known properties of the Poisson kernel \cite{Bea83} that 
\[\log|(\tilde{f}^{n})'\gamma^{+}|=d(0,p),\]
where $p\in\mathbb D$ denotes the point of intersection between $\gamma$ and the horocircle at $\gamma^{+}$ through
$\tilde{\omega}_{0}\dots\tilde{\omega}_{n-1}0$.
By \eqref{eq:bdd-dist} we have  
$
d(p,\tilde{\omega}_{0}\dots\tilde{\omega}_{n-1}0)\le2C
$
 and thus, 
\begin{equation}
|\log|(\tilde{f}^{n})'\gamma^{+}|-d(0,\tilde{\omega}_{0}\dots\tilde{\omega}_{n-1}0)|\le2C.\label{eq:bdd-dist2}
\end{equation}
The first assertion of the proposition now follows from Lemma~\ref{g-linear}. 

To prove the second assertion, first note that  the estimate in \eqref{eq:bdd-dist} remains intact if $\gamma\in\mathscr{R}$ is a ray through zero whose endpoint $\gamma^+$ is in between two points in $\tilde \varDelta(\tilde{\omega}_{0}\dots\tilde{\omega}_{n-1})\cap \Lambda_c$. Consequently, the first assertion of the proposition
also holds if $\xi$ is taken from the smallest arc in $\mathbb{S}^{1}$
containing $\tilde \varDelta(\tilde{\omega}_{0}\dots\tilde{\omega}_{n-1})\cap\Lambda_c$.
From this and the mean value theorem, the second assertion of the
proposition follows. 
\end{proof}

 \begin{lemma}\label{alp-lem1}We have $\alpha_-<\alpha_+$.
\end{lemma}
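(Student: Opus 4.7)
The plan is to split the argument according to whether $G$ has a parabolic element, using Proposition~\ref{coincide-lem} together with Lemma~\ref{alp+-} to translate pointwise Lyapunov exponents of $f$-periodic orbits into values in the spectrum $[\alpha^-,\alpha^+]$.

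If $G$ has a parabolic element, Lemma~\ref{alp-lem2} gives $\alpha^-=0$, so it suffices to exhibit a point of $\Lambda(G)$ with positive Lyapunov exponent. Fix any hyperbolic element $h\in G$ (available since $G$ is non-elementary), and let $\eta_h\in\Lambda(G)$ be its attracting fixed point. Then $\eta_h$ is not a cusp, hence not a neutral periodic point of $f$; by Lemma~\ref{represent-u}, $h$ has a unique admissible shortest representation of length $|h|$, and tracing through the Bowen--Series construction shows that $\eta_h$ is $f$-periodic of period $|h|$ with $|(f^{|h|})'(\eta_h)|=|h'(\eta_h)|^{-1}=e^{\ell(h)}>1$, where $\ell(h)>0$ denotes the translation length of $h$. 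Hence $\chi(\eta_h)=\ell(h)/|h|>0$, and Proposition~\ref{coincide-lem} gives $\alpha^+>0=\alpha^-$.

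If $G$ has no parabolic element, inequality \eqref{growth1} gives $\alpha^-\geq\alpha_1>0$, and it remains to produce two invariant measures with distinct Lyapunov exponents. Since $R$ has at least four sides, $G_R$ contains at least two non-inverse hyperbolic generators $e_1,e_2$. If $\ell(e_1)\neq\ell(e_2)$, the attracting fixed points of $e_i$ are periodic orbits of $f$ of period $1$ with distinct Lyapunov exponents $\ell(e_1)$ and $\ell(e_2)$, and we are done. Otherwise, set $\ell=\ell(e_1)=\ell(e_2)$ and consider $e_1e_2\in G$: by Lemma~\ref{represent-u}, $|e_1e_2|=2$, and since two non-inverse hyperbolic generators of a non-elementary Fuchsian group cannot share a common axis in $\mathbb{D}$ (sharing an axis would force commutativity and embed them in a cyclic subgroup, incompatible with the generating structure), the classical strict sub-additivity $\ell(e_1e_2)<\ell(e_1)+\ell(e_2)=2\ell$ holds. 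The attracting fixed point of $e_1e_2$ is then a period-$2$ orbit of $f$ with Lyapunov exponent $\ell(e_1e_2)/2<\ell$, again giving $\alpha^-<\alpha^+$.

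The main obstacle is the verification of the strict sub-additivity $\ell(e_1e_2)<2\ell$ in the final sub-case, which reduces to ruling out a shared axis for $e_1$ and $e_2$; if the originally chosen pair happens to share an axis, one instead selects a different pair of non-commuting hyperbolic elements in $G$ (whose existence is guaranteed by the non-elementary hypothesis) and carries out the same comparison on the periodic orbits of $f$ that they produce.
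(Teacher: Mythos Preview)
Your treatment of the parabolic case is essentially sound (though the claim that \emph{every} hyperbolic $h$ yields an $f$-periodic point of period exactly $|h|$ deserves more care: you need the admissible shortest representation of $h$ to be cyclically admissible in $\Sigma^+$, which is not automatic for arbitrary $h$). The paper handles this case more directly by invoking the uniformly expanding induced system of Proposition~\ref{induce-expansion}, which immediately furnishes a measure with positive Lyapunov exponent.

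The non-parabolic case, however, contains a genuine error. The ``classical strict sub-additivity'' $\ell(e_1 e_2)<\ell(e_1)+\ell(e_2)$ for hyperbolic isometries not sharing an axis is \emph{false}. A clean counterexample: on a hyperbolic pair of pants the fundamental group is free on two generators $a,b$ representing two boundary curves, with the third boundary curve freely homotopic to $(ab)^{-1}$; since pairs of pants exist with \emph{any} triple of positive boundary lengths, one can arrange $\ell(ab)>\ell(a)+\ell(b)$, $\ell(ab)<\ell(a)+\ell(b)$, or $\ell(ab)=\ell(a)+\ell(b)$ at will. So even the weaker statement $\ell(e_1 e_2)\neq 2\ell$ would require a separate argument specific to the side-pairing generators of $R$, and your fallback (``choose a different non-commuting pair'') does not help, since non-commutativity alone does not rule out equality. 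There is also an unverified step in asserting that the attracting fixed point of $e_1 e_2$ is $f$-periodic of period exactly $2$: this again needs $(e_1 e_2)^\infty\in\Sigma^+$, which depends on the vertex cycles of $R$.

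The paper avoids these pitfalls entirely: in the non-parabolic case it cites Lalley's result that $\varphi=-\log|f'|\circ\pi$ is not cohomologous to a constant, and then standard thermodynamic formalism (H\"older continuity of $\varphi$ plus uniform expansion of some iterate of $f$) gives $\alpha^-<\alpha^+$. Producing two explicit periodic orbits with different exponents is morally equivalent to this, but carrying it out rigorously for \emph{all} groups covered by Theorem~A seems to require the cohomology statement anyway.
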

 \begin{proof}
 If $G$ has a parabolic element, then $\alpha_-=0$ by Lemma~\ref{alp-lem2},
 and $\alpha_+>0$ since
the induced Markov interval map $\tilde f$ is uniformly expanding by Proposition~\ref{z-series-induce}. If $G$ has no parabolic element, it follows from \cite[Corollary~11.3]{Lalley89} that $\varphi$ in \eqref{varphi} is 
 not cohomologous to a constant. 
 Since  $f$ is piecewise $C^2$ and
 some iterate of $f$ is uniformly expanding,
  $\varphi$ is H\"older continuous. By a standard argument \cite[Proposition~4.5]{Bow75} we conclude that $\alpha_-<\alpha_+$. 
\end{proof}
\section{Thermodynamic formalism and multifractal analysis}
In this section we implement the thermodynamic formalism and
the multifractal analysis for the Bowen-Series map.
In Section~\ref{tf-unique}
we establish the uniqueness of equilibrium states and  the analyticity of the geometric pressure function.
In Sections~\ref{dim-formulas} and
\ref{bow-sec}, we apply
 results in \cite{JaeTak20} to obtain formulas for the Hausdorff dimension of level sets and the limit set. In Section~\ref{dim-sec} we derive formulas for the $\mathscr{H}$-spectrum and its first-order derivative in terms of the pressure. In Section~\ref{proof-thma} we complete the proof of the Main~Theorem.

\subsection{Uniqueness of equilibrium states, regularity of pressure}\label{tf-unique}
The next proposition is a key ingredient for the proofs of our main results. The proof relies heavily on the existence of an induced system which is uniformly expanding (see Proposition~\ref{z-series-induce}). Except for this geometrical fact, the arguments are well known, and can be found 
 in \cite{KesStr04}, \cite[Section~8]{MauUrb03}, \cite{PesSen08} for example.
For the convenience of the reader we include a proof in Appendix~A. 
\begin{prop}\label{press} 
The Bowen-Series map $f$ satisfies all of the following.
\begin{itemize}
\item[(a)] 
For any 
 $\beta\in(-\infty,\beta_+)$
there exists a unique equilibrium state for the potential $-\beta\log|f'|$, denoted by $\mu_\beta$.
 We have $\beta_+=+\infty$ if and only if  $G$ has no parabolic element. 
\item[(b)] The geometric pressure function $P$ is analytic 
 on $(-\infty,\beta_+)$. 
\item[(c)] For all $\beta\in(-\infty,\beta_+)$,
$P'(\beta)=-\chi(\mu_\beta)$.
In particular, the function
$\beta\in(-\infty,\beta_+)\mapsto\chi(\mu_\beta)$ is analytic.
\end{itemize}
\end{prop}

\subsection{Dimension formula for level sets}\label{dim-formulas}
We recall a few relevant definitions in \cite{JaeTak20}.
A measure $\mu\in\mathcal M(\Lambda,f)$ is  {\it  expanding} if  $\chi(\mu)>0$. The {\it  dimension} of a measure $\mu\in\mathcal M(\Lambda,f)$ is
defined by \[\dim(\mu)=\begin{cases}
\displaystyle{\frac{h(\mu)}{\chi(\mu)}}&\ \text{ if $\mu$ is expanding},\\
0&\ \text{ otherwise.}\end{cases}\]
For an ergodic expanding measure $\mu$,  the dimension 
$\dim(\mu)$ is equal to the infimum of the Hausdorff dimensions of sets
with full $\mu$-measure (see e.g., \cite[Theorem~4.4.2]{MauUrb03}). In particular, $\delta_G\geq\dim(\mu)$ holds for any $\mu\in\mathcal M(\Lambda,f)$.

We say $f$ is {\it  saturated} if
   \begin{equation}\label{saturate}
\delta_G=\sup\{\dim(\mu)\colon\mu\in\mathcal M(\Lambda,f)\}.\end{equation}
If $G$ has no parabolic element,
it is known \cite{Bow79,Ser81b} that the supremum in \eqref{saturate} is attained by a unique element, and
in particular $f$ is saturated.
This unique measure is equivalent to the   normalized $\delta_G$-dimensional Hausdorff measure on $\Lambda$ \cite{Pat76,Sul79}.
The saturation is important because it ensures that the
the dimension formula in \cite[Main~Theorem]{JaeTak20} accounts for any level set of positive Hausdorff dimension. 
Even in case 
 $G$ has a parabolic element,
the saturation still holds,
 although there is no measure which attains the supremum in \eqref{saturate}.

\begin{prop}\label{saturation}
The Bowen-Series map $f$ is saturated.
\end{prop}
\begin{proof} 
The case where $G$ has no parabolic element has already been explained.
Suppose $G$ has a parabolic element.
If \eqref{NC} holds, then $f$ is a non-uniformly expanding, finitely irreducible Markov map in the sense of \cite{JaeTak20}. Since  $\Lambda\setminus\bigcup_{n=0}^\infty f^{-n}(V_c)$ is contained in $\bigcup_{n=0}^\infty f^{-n}(\tilde\Lambda)$ and $V_c$ is countable, we have  $\dim_{\rm H}(\Lambda) = \dim_{\rm H}(\tilde\Lambda)$. Hence, $f$ is saturated by 
\cite[Proposition~5.2(c)]{JaeTak20}.
Even if \eqref{NC} does not hold, we have shown in  Proposition~\ref{z-series-induce} 
 that some power of the induced Markov map $\tilde f$ is uniformly expanding.
 Hence, the argument in the proof of \cite[Proposition~5.2(c)]{JaeTak20} works almost
  verbatim to conclude that $f$ is saturated.\end{proof}

\begin{prop}\label{l-formula} 
The Bowen-Series map $f$ satisfies all of the following.
\begin{itemize}
    \item[(a)] We have $\mathscr{H}(\alpha)\neq\emptyset$
if and only if $\alpha\in[\alpha_-,\alpha_+]$.
\item[(b)] For all $\alpha\in[\alpha_-,\alpha_+]$ we have
\begin{equation}\label{L-dimension}b(\alpha)=\lim_{\varepsilon\to0}\sup\{\dim(\mu)\colon\mu\in\mathcal M(\Lambda,f),\ |\chi(\mu)-\alpha|<\varepsilon\}.\end{equation}
\item[(c)] For all $\alpha\in[\alpha_-,\alpha_+]\setminus \{0\}$ we have
\[b(\alpha)=\max\left\{\dim(\mu)
\colon\mu\in\mathcal M(\Lambda,f),\ \chi(\mu)=\alpha\right\}.\]
\end{itemize}
\end{prop}

\begin{proof}
Proposition~\ref{coincide-lem} gives
$\mathscr{H}(\alpha)=\mathscr{L}(\alpha)$, and so $b(\alpha)=\dim_{\rm H}\mathscr{L}(\alpha)$.
If $G$ has no parabolic element, then some power of $f$ is uniformly expanding \cite[Theorem~5.1]{Ser81b}, and so the result is well known, see for example \cite{Ols03,Pes97,PesWei97,PesWei01,Wei99}.

Suppose $G$ has a parabolic element. The assertion in (a) follows from \cite[Main~Theorem(a)]{JaeTak20}.
To derive the desired formula in (b), we aim to apply \cite[Main~Theorem(b)]{JaeTak20}. By Proposition~\ref{BS-Markov},  $f$ is a
 finitely irreducible Markov map. By Proposition~\ref{MILD},  $f$ has  mild distortion, and by Proposition~\ref{saturation}, $f$ is saturated.
 In addition to these conditions, 
 in \cite[Main~Theorem(b)]{JaeTak20} it is  assumed that the map satisfies 
 the non-contracting condition as in \eqref{NC}. 
 However, the non-contracting condition was used in \cite{JaeTak20} only to ensure the non-existence of  points with negative pointwise Lyapunov exponent.
Although we do not assume the Bowen-Series map $f$ satisfies \eqref{NC}, 
the formulas in \cite[Main~Theorem(b)]{JaeTak20} remain intact for 
the level sets $\mathscr L(\alpha)$
since all points in these sets have non-negative pointwise Lyapunov exponents.
This proves the desired formula in (b).

Let $\alpha\in[\alpha_-,\alpha_+]\setminus \{0\}$.
In order to remove the limit $\varepsilon\to0$ in \eqref{L-dimension}, we use Lemma~\ref{meas-rep} to transfer the problem to $\mathcal M(X,\sigma)$. 
Since the function $\varphi:X\rightarrow \mathbb{R}$ 
  is continuous and the entropy is upper semicontinuous on the compact  space $\mathcal M(X,\sigma)$, 
 we can choose a convergent sequence $\{\mu_n\}_{n=1}^{\infty}$ 
 in $\mathcal M(X,\sigma)$ with positive entropy such that its weak* limit point 
 $\mu_{\infty}$ is an expanding measure satisfying 
 $\dim(\mu_{\infty}\circ\pi^{-1})=b(\alpha).$
This yields the desired formula in (c).
 \end{proof}



\subsection{Bowen's formula}\label{bow-sec}
The next type of formula, first established  in \cite{Bow79} for Fuchsian groups without parabolic elements, is referred to as Bowen's formula. It is known 
for conformal 
Graph Directed Markov Systems \cite[Theorem~4.2.13]{MauUrb03} which, in dimension one, correspond to uniformly expanding finitely irreducible Markov maps. 
Bowen's formula is also known 
for parabolic Iterated Function Systems \cite[Theorem~8.3.6]{MauUrb03}
and 
essentially free Kleinian groups with parabolic elements \cite{KesStr04}.

 \begin{prop}[Bowen's formula]\label{delta0}
We have 
\[\delta_G=\min\{\beta \ge 0 \colon P(\beta)=0\}.\]
\end{prop}
\begin{proof}
Put $\delta_0=\sup\{\dim(\mu)\colon\mu\in\mathcal M(\Lambda,f)\}$.
Since $f$ is saturated by Proposition~\ref{saturation}, we have
$\delta_0=\delta_G.$ Set
$\delta_1={\rm min}\{\beta\geq0\colon P(\beta)=0\}.$
It suffices to show 
$\delta_0=\delta_1$.
By definition, $\delta_0\geq \dim(\mu)$ holds for any
expanding measure $\mu \in \mathcal M(\Lambda,f)$.
Hence, $P(\delta_0)\leq0$ and so $\delta_0\geq\delta_1$.
Suppose for a contradiction that  $\delta_0>\delta_1$. Then there  exists $\varepsilon>0$ such that $\delta_0>\delta_1+\varepsilon$ and an expanding measure $\mu$ such that
$\dim(\mu)>\delta_1+\varepsilon$, and so $P(\delta_1+\varepsilon)>0$.
On the other hand,
 by the definition of $\delta_1$ and the monotonicity of pressure, we have  $P(\delta_1+\varepsilon) \leq P(\delta_1+\varepsilon/2)\leq0$, and a contradiction arises.
 Therefore $\delta_0=\delta_1$ holds.
\end{proof}

\subsection{Dimension formula for level sets in terms of pressure}\label{dim-sec}
We call  $\mu\in\mathcal M(\Lambda,f)$ 
satisfying $\dim(\mu)=\delta_G$ a {\it  measure of maximal dimension for $G$}. 
For the proof of the next proposition we refer the reader to Appendix~\ref{apdim}. 
\begin{prop}\label{nomax}
There exists a measure  of maximal dimension for $G$ if and only if $G$ has no parabolic element.
\end{prop}



\begin{lemma}\label{beta+zero}
If $G$ has a parabolic element, then $\lim_{\beta\nearrow\beta_+}P'(\beta)\ge 0.$
\end{lemma}
\begin{proof} 
 Proposition~\ref{press}(a) gives $\beta_+<\infty.$ 
Suppose for a contradiction that $\lim_{\beta\nearrow\beta_+}P'(\beta)<0.$
Take a sequence $\{\beta_n\}_{n=1}^{\infty}$ with $\beta_n\nearrow \beta_+$ 
as $n\to\infty$ and $\lim_{n\to\infty}P'(\beta_n)<0.$
Let $\mu_{\beta_n}$ be the equilibrium state for the potential $-\beta_n\log|f'|$  and 
let $\mu$ be an weak* accumulation 
point of $\{\mu_{\beta_n}\}_{n=1}^{\infty}$. Recall that
$X$ is a finite Markov shift where the entropy is upper semicontinuous,
and the function
$\varphi\colon X\to\mathbb R$ in \eqref{varphi} is continuous.
Hence,
$\mu$ is an equilibrium state for the potential $-\beta_+\log|f'|$, namely, $h(\mu)-\beta_+ \chi(\mu)=0$.
Since $P'(\beta_n)=-\chi(\mu_{\beta_n})$ by Proposition~\ref{press}(c), we have
$\chi(\mu)=\lim_{n\to\infty}\chi(\mu_{\beta_n})=-\lim_{n\to\infty}P'(\beta_n)>0.$ 
By Proposition~\ref{delta0}, $\mu$ is  a measure of maximal dimension for $G$, a contradiction to Proposition~\ref{nomax}.\end{proof}

\begin{prop}\label{press-II} If $G$ has a parabolic element, then the pressure is equal to zero on $[\beta_+,+\infty)$,
 $\beta_+=\delta_G$ and
the pressure function $P$ is $C^1$ on $\mathbb R$. Moreover, $P$ is strictly convex on $(-\infty, \delta_G)$. 
\end{prop}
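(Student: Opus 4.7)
My plan is to identify the value of $P_G$ on $[\beta^+,+\infty)$ together with the equality $\beta^+=\delta(G)$ in a single step, and then invoke Lemma~\ref{beta+zero} to match the two one-sided derivatives at $\beta^+$ and deduce $C^1$-smoothness.

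I would start by establishing the lower bound $P_G(\beta)\geq 0$ for every $\beta\in\mathbb R$. Because $G$ has a parabolic element, some cusp $v$ of $R$ is a neutral periodic point of $f$, of some prime period $p$, with $(f^p)'(v)=1$. The uniform $f$-invariant probability measure supported on the orbit of $v$ then has both zero entropy and zero Lyapunov exponent, and feeding it into the variational principle for $P_f$ together with $P_G=P_f$ (Proposition~\ref{p-equal}) gives $P_G\geq 0$. Next, by Lemma~\ref{negative} every $\mu\in\mathcal M(\Lambda(G),f)$ has $\chi(\mu)\geq 0$, so $P_G$ is non-increasing, and by Proposition~\ref{press}(b) it is convex (hence continuous) on $\mathbb R$. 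Bowen's formula (Proposition~\ref{delta0}) gives $P_G(\delta(G))=0$ and $P_G>0$ on $[0,\delta(G))$; monotonicity together with the definition of the min then yields $P_G>0$ on $(-\infty,\delta(G))$, while combining the upper bound $P_G\leq 0$ on $[\delta(G),+\infty)$ (from monotonicity) with the lower bound $P_G\geq 0$ forces $P_G\equiv 0$ on this interval. Since $\alpha^-=0$ by Lemma~\ref{alp-lem2}, the definition \eqref{beta-eq2} reads $\beta^+=\sup\{\beta:P_G(\beta)>0\}$, which is exactly $\delta(G)$.

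For the $C^1$-statement: $P_G$ is analytic on $(-\infty,\beta^+)$ by Proposition~\ref{press}(b) and identically zero on $(\beta^+,+\infty)$, so both pieces are individually smooth. Only the gluing at $\beta^+$ requires attention. The right derivative is trivially zero. For the left, Lemma~\ref{beta+zero} yields $\lim_{\beta\nearrow\beta^+}P_G'(\beta)=0$, which coincides with the left derivative of $P_G$ at $\beta^+$ by analyticity on $(-\infty,\beta^+)$. Hence $P_G'$ is continuous at $\beta^+$, and $P_G\in C^1(\mathbb R)$. The main subtle point I foresee is a hypothesis check: Lemma~\ref{beta+zero} assumes $\beta^+<+\infty$, so the identification $\beta^+=\delta(G)\leq 1$ must be placed \emph{before} the application of Lemma~\ref{beta+zero}; this is naturally handled by the order of steps above, since the finiteness $\beta^+\leq\delta(G)$ drops out of the preceding paragraph without any appeal to derivatives.
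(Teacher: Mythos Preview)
Your proof is correct. For the $C^1$ claim you proceed exactly as the paper does: both arguments ultimately rest on Lemma~\ref{beta+zero} (the paper routes this through \eqref{limbeta}, which itself is justified by Lemma~\ref{beta+zero}), and your hypothesis check $\beta^+=\delta(G)\leq 1<+\infty$ is placed correctly.

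For the identification of $P_G$ on $[\beta^+,\infty)$ and $\beta^+=\delta(G)$, your route differs slightly from the paper's. The paper argues directly from the definitions \eqref{beta-eq1}--\eqref{beta-eq2}: since $P_G(\beta)\geq -\alpha^-\beta$ always (Lemma~\ref{alp+-}) while $P_G(\beta)\leq -\alpha^-\beta$ for $\beta\geq\beta^+$ by definition of $\beta^+$, the pressure is linear with slope $-\alpha^-=0$ on $[\beta^+,\infty)$; then $\beta^+=\delta(G)$ follows from Proposition~\ref{delta0}. You instead establish $P_G\geq 0$ constructively via the neutral periodic measure, combine this with monotonicity and Bowen's formula to pin down where $P_G$ vanishes, and read off $\beta^+$ from $\sup\{\beta:P_G(\beta)>0\}$. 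Both arguments are short and equivalent in strength; yours is somewhat more self-contained in that it makes the lower bound $P_G\geq 0$ explicit rather than implicit in Lemma~\ref{alp+-}, while the paper's version generalizes more transparently to the non-parabolic side at $\beta^-$.
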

\begin{proof}
 Lemma~\ref{alp-lem2} gives $\alpha_-=0$. By the definition of $\beta_+$, the pressure is equal to zero 
 on $[\beta_+,+\infty)$. 
 By Proposition~\ref{delta0}  we have  $\beta_+=\delta_G$.
 By Proposition~\ref{press}(b), $P$ is analytic on $(-\infty,\beta_+)$. The continuous differentiability of $P$ at $\beta=\delta_G$ is a consequence of  
 Lemma~\ref{beta+zero} and the convexity of $P$. This shows that $P$ is $C^1$ on $\mathbb R$.
Since $P$ is analytic, convex and  non-increasing on $(-\infty, \delta_G)$,  an elementary inductive argument on the  power series expansion of $P$ shows that
 either $P$ is affine on $(-\infty,\delta_G)$ or strictly convex on  $(-\infty,\delta_G)$.  The first case is ruled out by 
Lemma~\ref{beta+zero} and the assumption that $G$ is non-elementary which gives   
$\delta_G>0$.
\end{proof}
From Lemma~\ref{alp+-}, Propositions~\ref{p-equal}
 and Proposition~\ref{press-II},
we have
\[\alpha_+=-\lim_{\beta\searrow-\infty}P'(\beta)\ \text{and}\
\alpha_-=-\lim_{\beta\nearrow \beta_+}P'(\beta).\]
 By Proposition~\ref{press}(c) and Proposition~\ref{press-II}, and the implicit function theorem, there exists a strictly decreasing analytic function $\beta:(\alpha_-,\alpha_+)\rightarrow (-\infty,\beta_+) $ satisfying 
$-P'(\beta(\alpha))=\chi(\mu_{\beta(\alpha)})=\alpha$.
We have
\begin{equation}\label{limbeta}\lim_{\alpha\searrow\alpha_-}\beta(\alpha)=\beta_+\ \text{ and }\
\lim_{\alpha\nearrow\alpha_+}\beta(\alpha)=-\infty.\end{equation}

\begin{prop}\label{b-alpha}
 For all $\alpha\in (\alpha_-,\alpha_+)$ we have
\begin{equation} \label{eq-spectrum1}
\alpha b(\alpha)=P(\beta(\alpha))+ \alpha\beta(\alpha)\ \text{ and }\ b(\alpha)
=\frac{P^*(-\alpha)}{\alpha}.
\end{equation}
Moreover, the $\mathscr{H}$-spectrum 
is analytic on $(\alpha_-,\alpha_+)$ and satisfies
\begin{equation} \label{eq-spectrum2}
b'(\alpha)=\frac{-P(\beta(\alpha))}{\alpha^2}.
\end{equation}
\end{prop}

\begin{proof}
We have
$P(\beta(\alpha))+\alpha\beta(\alpha)=
h(\mu_{\beta(\alpha)})-\beta(\alpha)\alpha+\alpha\beta(\alpha)
=h(\mu_{\beta(\alpha)})\leq\alpha b(\alpha)$ where
the last inequality follows from Proposition~\ref{l-formula}(c).
Again by Proposition~\ref{l-formula}(c),  there exists an expanding measure
 $\mu\in\mathcal M(\Lambda,f)$ such that $\chi(\mu)=\alpha$ and $\dim(\mu)=b(\alpha)$.
Then $\alpha b(\alpha)=h(\mu)= h(\mu)-\beta(\alpha)\alpha+\alpha\beta(\alpha)
\leq P(\beta(\alpha))+\alpha\beta(\alpha)$, and so the first equality in \eqref{eq-spectrum1} holds.

The second equality in \eqref{eq-spectrum1} follows from the first. 
Since $P$ and $\beta$ are analytic,    the $\mathscr{H}$-spectrum is analytic on $(\alpha_-,\alpha_+)$ by the first formula in \eqref{eq-spectrum1}.
Differentiating the first equality in \eqref{eq-spectrum1}, and  combining with the first equality  in \eqref{eq-spectrum1} and the fact that $P'(\beta(\alpha))=-\alpha$,  yields the equality in \eqref{eq-spectrum2}.
\end{proof}

\subsection{Proof of the Main~Theorem}\label{proof-thma}
Lemma~\ref{alp-lem1} gives $\alpha_{-}<\alpha_+$.  By Proposition~\ref{l-formula}(a) we have $\mathscr{H}(\alpha)\neq\emptyset$
if and only if $\alpha\in[\alpha_-,\alpha_+]$. The analyticity of the $\mathscr{H}$-spectrum on $(\alpha_-,\alpha_+)$ is due to
 Proposition~\ref{b-alpha}.
 The formula \eqref{L-dimension} implies that  the $\mathscr{H}$-spectrum is 
upper semicontinuous on $[\alpha_-,\alpha_+]$.
The lower semicontinuity of the spectrum at  
 $\alpha\in\{\alpha_-,\alpha_+\}\setminus \{0\}$
can be derived from Proposition~\ref{l-formula}(c). To prove this, we may assume that  $b(\alpha)>0$ and denote by $\mu \in \mathcal{M}(\Lambda,f)$  an expanding measure  with $\dim(\mu)=b(\alpha)$. Let $\alpha' \in (\alpha_-,\alpha_+)$ with  $\alpha'\neq \alpha$ and $\mu' \in \mathcal{M}(\Lambda,f)$ such that $\chi(\mu')=\alpha'$. Applying  Proposition \ref{l-formula}(c)  to convex combinations   $p\mu+(1-p)\mu'$ and letting $p\rightarrow 1$    shows that the spectrum is lower semicontinuous at $\alpha$. The remaining case $\alpha_-=0$ is  covered by  \cite[Main~Theorem(b)(ii)]{JaeTak20}. We have thus shown that $b$ is continuous on $[\alpha_-,\alpha_+]$. By the second formula in \eqref{eq-spectrum1} we have $b(\alpha)=P^*(-\alpha)/\alpha$ for $\alpha\in (\alpha_-,\alpha_+)$. Since $P^*$ is continuous on $[\alpha_-,\alpha_+]$, this formula extends to $[\alpha_-,\alpha_+]\setminus\{0\}$.

To complete the proof of (a), we define \begin{equation}\label{ag}\alpha_G=-P'(\delta_G).\end{equation} 
If $G$ has no parabolic element, we have
 $\beta(\alpha_G)=\delta_G$, and so $b'(\alpha_G)=0$, 
   and $b'(\alpha)(\alpha-\alpha_G)<0$ for $\alpha\in(\alpha_-,\alpha_+)\setminus\{\alpha_G\}$
 by \eqref{eq-spectrum2}.
If $G$ has a parabolic element, then
Proposition~\ref{press-II} gives $\alpha_G=0$, and \cite[Main~Theorem(b)(ii)]{JaeTak20} gives
$\lim_{\alpha\searrow\alpha_-}b(\alpha)=b(\alpha_-)=\delta_G.$
 Moreover,
 \eqref{eq-spectrum2} implies $b'(\alpha)<0$ for  $\alpha\in(\alpha_-,\alpha_+)$, and so
the $\mathscr{H}$-spectrum is strictly monotone decreasing on $[\alpha_-,\alpha_+]$.

 Finally, it follows from  \eqref{limbeta} and \eqref{eq-spectrum2} 
 that 
$\lim_{\alpha\nearrow\alpha_+}b'(\alpha)
=-\infty$. 
Similarly, if $G$ has no parabolic element, then  \eqref{limbeta} and \eqref{eq-spectrum2} 
 give 
$\lim_{\alpha\searrow\alpha_-}b'(\alpha)
=+\infty$. 
The proof of (a) is complete.
The assertions in (b) follow from Proposition~\ref{press-II}.
\qed
\medskip


 Finally we show that the regularity of the pressure is related to the existence of an inflection point in the spectrum. 
\begin{prop}\label{schematic-prop}
If $G$ has a parabolic element and the geometric pressure function is $C^2$, then 
$P''(\delta_G)=0$ and the $\mathscr{H}$-spectrum has an inflection point. 
\end{prop}
\begin{proof}
Recall that $\beta(\alpha)$ is the unique solution of the equation $P'(\beta)+\alpha=0$. By the implicit function theorem, $\alpha\in(\alpha_-,\alpha_+)\mapsto\beta(\alpha)$ is differentiable and \begin{equation}\label{beta-der}\beta'(\alpha)=-\frac{1}{P''(\beta(\alpha))}.\end{equation}
We apply l'H\^opital's rule to \eqref{eq-spectrum2} together with \eqref{beta-der} to obtain 
$\lim_{\alpha\searrow\alpha_-}b'(\alpha)=-\lim_{\beta\nearrow\delta_G}1/(2P''(\beta))$ when one of the two limits exists. Since
  $P''(\beta)>0$ for $\beta<\delta_G$, we obtain
$\lim_{\alpha\searrow\alpha_-}b'(\alpha)=-\infty$. Since
$\lim_{\alpha\nearrow\alpha_+}b'(\alpha)=-\infty$ by the Main~Theorem,
we must have $b''(\alpha_*)=0$ for some $\alpha_*\in(\alpha_-,\alpha_+)$.
\end{proof}

\appendix
\def\thesection{\Alph{section}}
\section{}

\subsection{Proof of Proposition~\ref{press}}
 All the statements for $G$ without parabolic elements are well known \cite{Bow75,Rue04}, since some power of $f$ is uniformly expanding \cite[Theorem~5.1]{Ser81b} in this case.
Hence, we assume $G$ has a parabolic element.
Our strategy is to apply to
 $\tilde X$
the results in \cite{MauUrb03} 
on the thermodynamic formalism 
for countable Markov shifts.

Let
$\tilde\sigma\colon
\tilde X\to\tilde X$ denote the left shift. 
We write $\tilde\pi$ for $\pi_{\tilde X}$ and
 $\tilde t$ for $t\circ\tilde\pi$.
Note that $\tilde t$ is constant on each $1$-cylinder
$\tilde\varDelta(\tilde a)$ in $\tilde X$.
Let $\tilde t(\tilde{a})$ denote
the constant value of $\tilde t$ on each partition element $\tilde\varDelta(\tilde a)$.
For $n\geq1$ let 
\[\tilde S(n)=\{\tilde a\in \tilde S\colon \tilde t(\tilde a)=n\}.\]
\begin{lemma}\label{cardinal} 
 For any $n\geq 2$ we have
$\#\tilde S(n)\leq (\#S)^3$.
\end{lemma}
\begin{proof}From the definition of the Markov map $\tilde f$ in Section~\ref{c-ind}, for each 
$\tilde a\in \tilde S$ with $\tilde t(\tilde a)=n\geq2$,
there exists a unique element $\omega_0\omega_1\cdots\omega_{n}$ of $E^{n+1}(X)$ with $\varDelta(\omega_0\omega_1\cdots\omega_{n})=\tilde\varDelta(\tilde a)$.
Let $v$ denote the cusp that is contained in ${\rm cl}(\varDelta(\omega_1))$. Since $f^i(v)\in{\rm cl}(\varDelta(\omega_{i+1}))$ for $0\leq i\leq n-1$,
 the sequence $\omega_0\omega_1\cdots\omega_{n}$
is determined by the three symbols $\omega_0$, $\omega_1$ and $\omega_n$ in $S$. 
hence the desired inequality follows.\end{proof}

The next lemma follows from \cite[Lemma~2.8]{BowSer79}.
\begin{lemma}\label{mild-del}
For any
$\tilde a\in \tilde S$ and $\xi\in\tilde\varDelta(\tilde a)$ we have
 $
|({\tilde f})'\xi| \asymp \tilde t(\tilde a)^2.
$
\end{lemma}

For $(\beta,\zeta)\in\mathbb R^2$ we define
an induced potential $\Phi_{\beta,\zeta}\colon \tilde X\to \mathbb R$ by
\[\Phi_{\beta,\zeta}(\tilde x)=-\beta\log|(\tilde f)'\tilde\pi(\tilde x)|-\zeta\tilde t(\tilde x),\]
and an induced pressure
\[\mathscr{P}(\beta,\zeta)=\lim_{n\to\infty}\frac{1}{n}\log\sum_{\tilde\omega_0\cdots\tilde\omega_{n-1}\in E^n(\tilde X)}\sup_{[\tilde\omega_0\cdots\tilde\omega_{n-1}]}\exp\left(\sum_{k=0}^{n-1}\Phi_{\beta,\zeta}\circ\tilde\sigma^k\right).\]
Since logarithm of the series  is sub-additive in $n$, this limit exists and is never $-\infty$.
By \cite[Theorem~2.1.8]{MauUrb03}, the variational principle holds:
\[\mathscr{P}(\beta,\zeta)=\sup\left\{h(\tilde\mu)+\int \Phi _{\beta,\zeta} d\tilde\mu\colon\tilde\mu\in\mathcal M(\tilde X,\tilde\sigma),\int\Phi _{\beta,\zeta}d\tilde\mu>-\infty\right\}.\]
In the case $\mathscr{P}(\beta,\zeta)<+\infty$,
measures which attain this supremum are called {\it  equilibrium states} for the potential $\Phi_{\beta,\zeta}$.

We aim to verify sufficient conditions in
\cite[Theorem~2.2.9, Corollary~2.7.5]{MauUrb03}\footnote{The term ``bounded function'' in \cite[Corollary~2.7.5]{MauUrb03}  should be read ``function bounded from above''.} for the existence and uniqueness of a shift-invariant Gibbs state and the equilibrium state for the potential $\Phi_{\beta,\zeta}$.
We say $\Phi_{\beta,\zeta}$ is {\it  summable} if
\[
\sum_{\tilde a\in \tilde S}\sup_{[\tilde a]}\exp\Phi_{\beta,\zeta}<+\infty.\]
It is easy to see that the summability of $\Phi_{\beta,\zeta}$
implies the finiteness of
$\mathscr{P}(\beta,\zeta)$.


\begin{lemma}\label{pr}
The potential $\Phi_{\beta,\zeta}$   is summable if $\zeta>0$. 
\end{lemma}

\begin{proof}
By combining Lemmas \ref{cardinal} and \ref{mild-del}.  
\end{proof}

Recall that $d_{\tilde X}$ denotes the metric on $\tilde X$.
A function $\Psi\colon\tilde X\to\mathbb R$
is {\it  locally H\"older continuous} if there exist
constants  $C>0$ and $\theta\in(0,1]$ such that for any $\tilde a\in\tilde S$ and all $\tilde x,\tilde y\in[\tilde a]$ 
 we have
\[|\Psi(\tilde x)-\Psi(\tilde y)|\leq
C(d_{\tilde X}(\tilde x,\tilde y))^\theta.\]

\begin{lemma}\label{l-holder'''}
For any $(\beta,\zeta)\in\mathbb R^2$,
$\Phi_{\beta,\zeta}$ is locally H\"older continuous.
\end{lemma}
\begin{proof}
Let $\tilde a\in\tilde S$ and let $\tilde x,\tilde y\in[\tilde a]$.
Let $n\geq1$ be such that
$d_{\tilde X}(\tilde x,\tilde y)=e^{-n}$.
By the mean value theorem and 
Proposition~\ref{z-series-induce} there exists $\alpha_0>0$ such that $|\tilde\pi(\tilde\sigma\tilde x) -\tilde\pi(\tilde\sigma\tilde y)| \ll  e^{-\alpha_0 (n-1)}$. Combining this with the R\'enyi condition in  \cite[Lemma~2.8]{BowSer79} we obtain
\[|\log|(\tilde f)'\tilde\pi(\tilde x)|-\log|(\tilde f)'\tilde\pi(\tilde y)|| \ll  e^{-\alpha_0 (n-1)}.
\]
This implies that
$\log|(\tilde f)'|\circ\tilde\pi$ is locally H\"older continuous with 
$\theta=\min\{\alpha_0/2,1\}$. Moreover, $\tilde t$ is
 locally H\"older continuous
since it is constant on each induced $1$-cylinder.
Hence, $\Phi_{\beta,\zeta}$ is locally H\"older continuous. \end{proof}

\begin{lemma}\label{irreducible}
 The Markov map $\tilde{f}$ with the Markov partition 
$(\tilde\varDelta(\tilde a))_{\tilde a\in \tilde S}$ is finitely irreducible.  
\end{lemma}
\begin{proof}By the definition of $\tilde f$  and  the transitivity of the Markov  map $f$, the proof is straightforward. \end{proof}

By \cite[Corollary~2.7.5]{MauUrb03} together with Lemmas~\ref{pr}
and \ref{l-holder'''},
for any $\beta\in(-\infty,\beta_+)$
there exists a unique $\tilde \sigma$-invariant Gibbs state  $\tilde\mu_\beta$ for $\Phi_{\beta, P(\beta)}$, namely, there exists a constant $C\geq1$ such that 
for any $\tilde x\in\tilde X$ and $n\geq1$,
\begin{equation}\label{gibbs}
C^{-1}\leq\frac{\tilde\mu_\beta[\tilde x_0\cdots\tilde x_{n-1}]}{
\exp\left(-\mathscr{P}(\beta,P(\beta))n+\sum_{k=0}^{n-1}\Phi_{\beta,P(\beta)}
(\tilde\sigma^k\tilde x)\right)}\leq C.\end{equation}


\begin{lemma}\label{finite-int}
If $\beta\in(-\infty,\beta_+)$ then
$\int \tilde t d\tilde\mu_{\beta}<+\infty$ and
$\int \Phi_{\beta,P(\beta)}d\tilde\mu_\beta>-\infty.$
\end{lemma}
\begin{proof}
Let  $\beta < \beta_+$. Let $C$ denote the constant given by \eqref{gibbs}.  Lemma~\ref{mild-del} yields
\[
\sum_{n=1}^{\infty} n\sum_{\tilde a\in \tilde S(n)} \tilde\mu_\beta[\tilde a]\ll
 Ce^{-\mathscr{P}(\beta,P(\beta))}\sum_{n=1}^{\infty}  
 n^{1+2\beta} e^{-P(\beta)n},
\]
which is finite since $P(\beta)>0$. Hence,  by Lemma~\ref{cardinal},  
 $\int \tilde t d\tilde\mu_{\beta}=\sum_{\tilde a\in \tilde S} \tilde t(\tilde a)\tilde\mu_\beta[\tilde a]<+\infty$, and also
 $\int\log|({\tilde f})'|\circ\tilde\pi d\tilde\mu_{\beta}<+\infty$. Therefore
 $\int \Phi_{\beta,P(\beta)}d\tilde\mu_\beta>-\infty.$
 \end{proof}
  By \cite[Theorem~2.2.9]{MauUrb03}
together with Lemma~\ref{finite-int}, $\tilde\mu_\beta$ is the unique equilibrium state for the potential 
$\Phi_{\beta,P(\beta)}$, namely
\begin{equation}\label{mathp}
\mathscr{P}(\beta,P(\beta))=
h(\tilde \mu_\beta)+\int
-\beta\log|({\tilde f})'|\circ\tilde\pi-P(\beta)\tilde t d\tilde\mu_\beta.
\end{equation}
The measure given by
\[\mu_\beta=\frac{1}{\int \tilde t d\tilde\mu_\beta}\sum_{n=0}^{\infty}
\tilde\mu_\beta|_{\{\tilde t>n\}}\circ (f^n\circ\tilde\pi)^{-1}\]
belongs to $\mathcal M(\Lambda,f)$ and
by Abramov-Kac's formula \cite[Theorem~2.3]{PesSen08} 
satisfies
\begin{equation}\label{induce-p-eq1}
\mathscr{P}(\beta,P(\beta))=
\left(h(\mu_\beta)-\beta\chi(\mu_\beta)-P(\beta)\right)\int \tilde t d\tilde\mu_\beta.
\end{equation}


\begin{lemma}\label{induce-p-lem}
For all $\beta\in(-\infty,\beta_+)$,
$\mathscr P(\beta,P(\beta))=0$.
\end{lemma}
\begin{proof}
Let $\varepsilon>0$.
 From Lemma~\ref{meas-rep} and the fact that any measure in $\mathcal M(X,\sigma)$ is approximated in the weak* topology by ergodic ones with similar entropy
  \cite[Theorem~B]{eizen94}, it follows that there
 exists an ergodic $\nu\in\mathcal M(\Lambda,f)$ with $h(\nu)>0$ and
 $h(\nu)-\beta\chi(\nu)> P(\beta)-\varepsilon$.
Since $\Lambda\setminus\bigcup_{n=0}^\infty f^{-n}(V_c)\subset \bigcup_{n=0}^\infty f^{-n}(\tilde\Lambda)$, measures in $\mathcal M(\Lambda,f)$  supported in the complement of $\tilde\Lambda$
have zero entropy, and so
 $\nu(\tilde\Lambda)>0$.
The normalized restriction $\bar\nu$ of $\nu$ to $\tilde\Lambda$ is $\tilde {f}$-invariant and so the measure 
$\tilde\nu=\bar\nu\circ\tilde\pi^{-1}$ belongs to $\mathcal M(\tilde X,\tilde\sigma)$.
The variational principle for the potential 
$\Phi_{\beta,P(\beta)}$ 
yields
\[\begin{split}\mathscr{P}(\beta,P(\beta)-\varepsilon)&\geq  h(\tilde\nu)+\int-\beta\log|(\tilde f)'|\circ \tilde \pi-(P(\beta)-\varepsilon)\tilde t d\tilde\nu\\
&=
\left(h(\nu)-\beta\chi(\nu)-P(\beta) +\varepsilon\right) \int \tilde t d\tilde\nu\geq0.\end{split}\]
Since $\beta< \beta_+$, we have $P(\beta)>0$.  By Lemma~\ref{pr}, $\mathscr P(\beta,P(\beta)-\varepsilon)$ is finite for sufficiently small $\varepsilon\geq0$. The variational principle \cite[Theorem~2.1.8]{MauUrb03} implies that the non-negative function $\varepsilon\mapsto \mathscr P(\beta,P(\beta)-\varepsilon)$ is convex on a neighborhood of $\varepsilon=0$. Hence
we obtain $\mathscr P(\beta,P(\beta))\geq0$.
Combining this with \eqref{induce-p-eq1} we conclude that
$\mathscr P(\beta,P(\beta))=0$.
\end{proof}
From \eqref{induce-p-eq1} and Lemma~\ref{induce-p-lem},
 $\mu_\beta$ is an equilibrium state of $f$ for the potential $\beta\phi$.
 From the uniqueness of $\tilde\mu_\beta$, such an equilibrium state is unique, namely
  $\mu_\beta$ is the unique equilibrium state of $f$ for this potential.
  Since $P(0)>0$, $\alpha_-=0$ and $P(1)=0$, we have $0<\beta_+\leq1$. This completes the proof of (a).

Next we show the analyticity of the pressure.
Let $\beta_0\in(-\infty,\beta_+)$.
By 
\cite[Theorem~2.6.12]{MauUrb03} together with
Lemma~\ref{pr}, $\mathscr{P}(\beta,\zeta)$ is analytic at $(\beta_0,P(\beta_0))$, and so can be extended to a holomorphic function in a complex 
neighborhood of $(\beta_0,P(\beta_0))$. Note that the analyticity results in \cite{MauUrb03} continue to hold for finitely irreducibile shift spaces, see also \cite{PU21}. 
Lemma~\ref{induce-p-lem} gives $\mathscr{P}(\beta_0,P(\beta_0))=0$, and \eqref{mathp}
 shows
$\frac{\partial\mathscr{P}}{\partial \zeta}(\beta,P(\beta))=-\int\tilde t d\tilde\mu_\beta\neq0$.
By the implicit function theorem for holomorphic functions, the pressure
is analytic at $\beta=\beta_0$.
 This completes the proof of (b).


Finally, we verify (c). By \cite[Theorem~2.6.13]{MauUrb03},
$\frac{\partial\mathscr{P}}{\partial \beta}(\beta,P(\beta))=-\int\log|\tilde f| d\tilde\mu_\beta$.
The implicit function theorem gives
\[P'(\beta)=-\frac{\frac{\partial\mathscr{P}}{\partial \beta}(\beta,P(\beta))}{\frac{\partial\mathscr{P}}{\partial \zeta}(\beta,P(\beta))}=-\frac{\int\log|\tilde f| d\tilde\mu_\beta}{\int\tilde t d\tilde\mu_\beta}=-\chi(\mu_\beta),\]
as required.
The proof of Proposition~\ref{press} is complete.\qed

\subsection{Proof of Proposition~\ref{nomax}}\label{apdim}
It is well known that $G$ has a measure of maximal dimension if $G$ has no parabolic element. Now assume that $G$ has a parabolic element and assume for a contradiction that there exists a measure of maximal dimension $\mu$. 
From Proposition~\ref{delta0}, 
$\mu\circ\pi$ is an equilibrium state for the potential $-\delta_G\log|f'|\circ \pi$. Since $\delta_G>0$ by Proposition~\ref{delta0}, we have $h(\mu\circ\pi)>0$. Hence, 
$\mu(\tilde\Lambda)>0$.
The normalized restriction 
$\tilde\mu$ of 
$\mu\circ\pi$ to 
$\pi^{-1}(\tilde\Lambda)$
is 
$\tilde\sigma$-invariant
and satisfies 
$\int \tilde t d\tilde\mu<+\infty$ by Kac's formula.  Moreover, 
\[
 h(\tilde \mu)+\int \Phi_{\delta_G,0} d\tilde \mu= \left( h(\mu)-\delta_G \chi(\mu) \right) \int \tilde t d\tilde\mu =0.
\]
Proposition~\ref{delta0} and   approximations by finite subsystems together imply
$\mathscr{P}(\delta_G,0)\leq0$.
Hence, $\mathscr{P}(\delta_G,0)=0$
and by \cite[Theorem~2.1.9]{MauUrb03}, $\Phi_{\delta_G,0}$ is summable.
By \cite[Corollary~2.7.5]{MauUrb03},
 $\tilde \mu$ is the unique Gibbs-equilibrium state for 
the potential $\Phi_{\delta_G,0}$. 
By Lemma~\ref{mild-del}, we obtain a constant $C>0$ with $\int \tilde t d\tilde\mu \ge C \sum_{n=1}^{\infty} n\cdot  n^{-2\delta_G}=+\infty$, a contradiction.\qed
\subsection{Typical homological growth rates}\label{typical-h}
 Recall that  $\alpha_G$ 
 is the unique maximal point of the $\mathscr{H}$-spectrum: 
 $b(\alpha_G)=\delta_G$ by
 The Main~Theorem.
 It is well known \cite[Theorem~1.2]{Bea71} that
 $G$ is of the second kind if and only if 
$\delta(G)<1$. Hence, $b(\alpha_G)=1$ if and only if $G$ is of the first kind.
 Even more, the following holds.
 \begin{prop}\label{intermittency}
 If $G$ is of the first kind, 
 then $|\Lambda\setminus \mathscr{H}(\alpha_G)|=0$.
 \end{prop}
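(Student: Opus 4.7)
The plan is to combine Proposition~\ref{coincide-lem}, which identifies $H(\alpha_G)$ with the Lyapunov level set $L(\alpha_G)$, with an ergodic-theoretic analysis of $f$ against an absolutely continuous invariant measure. Since $G$ is of the first kind, $\Lambda(G)=\mathbb S^1$ and Proposition~\ref{delta0} gives $\delta(G)=1$, so $b_G(\alpha_G)=1$ by Theorem~A. The task therefore reduces to showing that the pointwise Lyapunov exponent $\chi(\eta)$ of $f$ exists and equals $\alpha_G$ for Lebesgue-almost every $\eta\in\mathbb S^1$. The argument splits into two cases.

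If $G$ has no parabolic element, some iterate of $f$ is uniformly expanding by \cite[Theorem~5.1]{Ser81b}, and standard Ruelle--Perron--Frobenius theory produces a unique $f$-invariant ergodic probability measure $\mu$ equivalent to Lebesgue with bounded density. Rokhlin's entropy formula gives $h(\mu)=\chi(\mu)$, hence $\dim(\mu)=1=b_G(\alpha_G)$. Now Proposition~\ref{l-formula}(b) together with the fact (from Theorem~A(e)) that $b_G$ takes the value $1$ only at $\alpha_G$ forces $\chi(\mu)=\alpha_G$, and Birkhoff's theorem for $\mu$ finishes the case since $\mu$ and Lebesgue have the same null sets.

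The hard part is the parabolic case, where $\alpha_G=\alpha^-=0$ by Theorem~A(e), so we must prove $\chi(\eta)=0$ for Lebesgue-a.e.\ $\eta$. The map $f$ is only intermittent, and by the dimension formula together with Proposition~\ref{nomax} no $f$-invariant probability measure with Lyapunov exponent $0$ is available. The plan is to pass instead to the induced map $\tilde f$ of Section~\ref{c-ind}, which by Proposition~\ref{induce-expansion} is eventually uniformly expanding, has M\"obius branches (so Koebe gives uniformly bounded distortion on each cylinder), and is finitely irreducible by Lemma~\ref{irreducible}. This yields a unique ergodic $\tilde f$-invariant probability measure $\tilde m$ on $\tilde\varDelta$ equivalent to Lebesgue, from which we build a $\sigma$-finite $f$-invariant measure on $\mathbb S^1$, equivalent to Lebesgue by transitivity of $f|_{\Lambda(G)}$, via the standard tower
\[
m(A)=\int_{\tilde\varDelta}\sum_{n=0}^{t(x)-1}\mathbf{1}_A(f^n x)\, d\tilde m(x).
\]
One checks that $\int \log|f'|\, dm=\int\log|\tilde f'|\, d\tilde m<\infty$ (Lemma~\ref{finite-int}), while the parabolic branch length estimate used in the proof of Proposition~\ref{nomax} gives $\tilde m\{t=n\}\asymp n^{-2}$, so $m(\mathbb S^1)=\int t\, d\tilde m=+\infty$.

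Finally, Hopf's ratio ergodic theorem applied to $\varphi=\log|f'|$ and $\psi=\mathbf{1}_{\tilde\varDelta}$ delivers
\[
\frac{\log|(f^n)'\eta|}{N_n(\eta)}\longrightarrow\int\log|\tilde f'|\, d\tilde m\in(0,\infty)
\]
for $m$-a.e.\ $\eta$, where $N_n(\eta)=\#\{0\le k<n:f^k\eta\in\tilde\varDelta\}$. The $j$-th return time $\tau_j(\eta)=\sum_{k=0}^{j-1}t(\tilde f^k\eta)$ is a Birkhoff sum, for $\tilde f$ under $\tilde m$, of the non-integrable observable $t$; applying Birkhoff to the truncations $\min(t,M)$ and letting $M\to\infty$ forces $\tau_j/j\to+\infty$, which inverts to $N_n/n\to 0$. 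Multiplying the two limits gives $(1/n)\log|(f^n)'\eta|\to 0$ for $m$-a.e.\ and hence Lebesgue-a.e.\ $\eta$, which is exactly $\chi(\eta)=0=\alpha_G$.
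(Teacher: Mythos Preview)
Your overall architecture matches the paper's: reduce to Lyapunov exponents via Proposition~\ref{coincide-lem}, then split into the non-parabolic and parabolic cases; in the latter build the infinite $f$-invariant measure $m$ from the induced system and invoke Hopf's ratio ergodic theorem. The non-parabolic case is fine and essentially identical to the paper's.

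In the parabolic case, however, there is a genuine gap. You apply Hopf's ratio ergodic theorem with $\varphi=\log|f'|$, which requires $\log|f'|\in L^1(m)$. Your justification ``$\int\log|f'|\,dm=\int\log|\tilde f'|\,d\tilde m<\infty$'' only addresses the signed integral, and indeed this identity itself presupposes integrability. The problem is that at the points of the neutral periodic cycle $v,f(v),\ldots,f^{p-1}(v)$ the individual derivatives $|f'(f^jv)|$ need not equal $1$; only their product over a full period does, since $f^p$ near $v$ is a parabolic M\"obius transformation. Consequently $|\log|f'||$ can be bounded away from $0$ along the cycle, and since $m$ places infinite mass on every neighborhood of the cycle, $\int|\log|f'||\,dm=\infty$ in general. (Concretely, on the branch with inducing time $n$ one gets $\sum_{j=0}^{n-1}|\log|f'(f^jx)||\asymp n$ rather than $\asymp\log n$, so the integral diverges like $\sum_n n\cdot n^{-2}$.) Thus Hopf does not apply to $\log|f'|$ directly. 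Your citation of Lemma~\ref{finite-int} is also off: that lemma concerns $\tilde\mu_\beta$ for $\beta<\beta^+$, whereas here $\beta=\delta(G)=\beta^+$.

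The paper sidesteps this by applying Hopf only to \emph{non-negative indicator functions}: it shows that for every open $U$ containing the neutral cycle, the proportion of time in $U$ tends to $1$, and then deduces $\chi=0$. Your route can be repaired, for instance by applying Hopf to $\log|(f^p)'|$ with $p$ the l.c.m.\ of the cusp periods (this vanishes on the cycle and hence lies in $L^1(m)$), or by replacing $\log|f'|$ with $\log|\tilde f'|\cdot\mathbf 1_{\tilde\varDelta}$ and controlling the discrepancy between $\log|(\tilde f^{N_n})'\eta|$ and $\log|(f^n)'\eta|$ coming from the last incomplete excursion.
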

 \begin{proof}
  By Proposition~\ref{coincide-lem}, it is enough to show that $(1/n)\log|(f^n)'|$ converges Lebesgue a.e. to the constant $\alpha_G$ in \eqref{ag}
 as $n\to\infty$. 
 If $G$ has no parabolic element,
 there exists an ergodic $f$-invariant probability measure
 $\mu_{\rm ac}$ that is absolutely continuous with respect to the Lebesgue measure.
 Since $\sigma\colon X\to X$ is transitive, the support of $\mu_{\rm ac}$ is equal to $\Lambda$. 
  By Birkhoff's ergodic theorem,
  $(1/n)\log|(f^n)'|$ converges Lebesgue a.e.
  to the Lyapunov exponent of $\mu_{\rm ac}$,
   namely, the Lebesgue measure of the set $\Lambda\setminus \mathscr{H}(\chi(\mu_{\rm ac}))$ is $0$ and
    $\chi(\mu_{\rm ac})=\alpha_G$ holds.
 
 If $G$ has a parabolic element, then $\alpha_G=0$ by the Main~Theorem.
  It suffices to show that for any open set $U$ containing all neutral periodic points of $f$,
 \begin{equation}\label{intermittency-eq}\lim_{n\to\infty}\frac{1}{n}\#\{0\leq i\leq n-1\colon f^i(\xi)\in U\}=1\end{equation}
 for Lebesgue almost every $\xi\in\Lambda$.
 The $f$-invariant measure $\mu_{\rm ac}=\sum_{n=0}^{\infty}
\tilde\mu_1|_{\{\tilde t>n\}}\circ (f^n\circ\tilde\pi)^{-1}$ is ergodic, infinite and  absolutely continuous with respect to the Lebesgue measure. 
Moreover,
the density of $\mu_{\rm ac}$ is positive everywhere and infinite only at the neutral periodic points.  
It is well known that for $h\in L^1(\mu_{\rm ac})$ we have $\lim_{n\rightarrow \infty} (1/n) \sum_{k=0}^{n-1} h\circ f^k=0$ $\mu_{\rm ac}$-a.e.
 Since  $\Lambda\setminus U$ has finite $\mu_{\rm ac}$ measure, taking $h$ as the indicator of $\Lambda\setminus U$ proves \eqref{intermittency-eq}  for Lebesgue almost every $\eta\in\Lambda$.
 \end{proof}

\subsection*{Acknowledgments}
We thank Hiroyasu Izeki for fruitful discussions.
JJ was supported by the JSPS KAKENHI 21K03269.
HT was supported by the JSPS KAKENHI 
19K21835 and 20H01811.

\end{document}